\definecolor{purple}{rgb}{0.59, 0.44, 0.84}
\definecolor{orange}{rgb}{1, 0.6, 0.4}
\newcommand{\Mod}[1]{\ (\mathrm{mod}\ #1)}
\newtheorem{theorem}{Theorem}[section]
\newtheorem{lemma}[theorem]{Lemma}
\newtheorem{corollary}[theorem]{Corollary}
\newtheorem{proposition}[theorem]{Proposition}
\newtheorem{defn}{Definition}[section]
\newcounter{example}
\newenvironment{example}[1][]{\refstepcounter{example}\par\medskip
   \noindent \textbf{Example~\theexample. #1} \rmfamily}{\medskip}
\newcounter{remark}
\newenvironment{remark}[1][]{\refstepcounter{remark}\par\medskip
   \noindent \textbf{Remark~\theremark. #1} \rmfamily}{\medskip}
\numberwithin{equation}{section}
\numberwithin{example}{section}
\numberwithin{remark}{section}
\newcommand{\ba}{\backslash}
\def \l {\lambda}
\def\P{\mathbb P}
\def\C{\mathbb C}
\def\G{\mathbf G}
\def\g{\gamma}
\def\GL{\mathrm{GL}}
\def\Tr{{\rm Tr}}
\def\Frob{{\rm Fr}}
\def\F{{\mathbb F}}
\def\Z{{\mathbb Z}}
\def\Q{{\mathbb Q}}
\def\wp{\mathfrak{p}}
\def\BK{\mathbb{K}}
\def\G{\Gamma}
\def\SL{\mathrm{SL}}
\def\M#1#2#3#4{\begin{pmatrix}\, #1&#2\\#3&#4\end{pmatrix}}
\def\N{\mathbb N}
\def\C{\mathbb{C}}
\def\Z{\mathbb{Z}}
\def\Q{\mathbb{Q}}
\def\F{\mathbb{F}}
\def\M#1#2#3#4{\begin{pmatrix}#1&#2\\#3&#4\end{pmatrix}}
\def \Frob{\text{Frob}}
\def \ol{\overline}
\def \eps{\varepsilon}
\newcommand{\fp}{\mathbb{F}_p}
\def \g {\mathfrak{g}}
\newcommand{\fq}{\mathbb{F}_q}
\newcommand{\kphat}
{\widehat{\kappa_{\wp}^{\times}}}
\newcommand{\hypcoeff}[1]{\frac{(\ba)_{#1}}{(\bbeta)_{#1}}}
\newcommand*\HYPERskip{&}
\newcommand*\pFq{
\begingroup
\catcode`\,\active
\def ,{\HYPERskip}%
\doHyper
}
\def\doHyper#1#2#3#4#5{%
\, _{#1}F_{#2}\left[\begin{matrix}#3 \smallskip \\  #4\end{matrix} \; ; \; #5\right]%
\endgroup
}
\newenvironment{singnumalign}{
    \begin{equation}
    \begin{aligned}
}{
    \end{aligned}
    \end{equation}
    \ignorespacesafterend
}
\newcommand*\HYPERpp{&}
\newcommand*\pPPq{
\begingroup
\catcode`\,\active
\def ,{\HYPERpp}%
\doHyperFpp
}
\def\doHyperFpp#1#2#3#4#5{%
\, _{#1}{\mathbb P}_{#2}\left[\begin{matrix}#3 \smallskip \\  #4\end{matrix} \; ; \; #5\right]%
\endgroup
}
\def\SL{\mathrm{SL}}
\def\M#1#2#3#4{\begin{pmatrix}#1&#2\\#3&#4\end{pmatrix}}
\def\N{\mathbb N}
\def\C{\mathbb{C}}
\def\Z{\mathbb{Z}}
\def\Q{\mathbb{Q}}
\def\F{\mathbb{F}}
\def\M#1#2#3#4{\begin{pmatrix}#1&#2\\#3&#4\end{pmatrix}}
\def \Frob{\text{Frob}}
\def \ol{\overline}
\def \eps{\varepsilon}
\def \HD{\mathit{HD}}
\def \ba{\boldsymbol{\alpha}}
\def \bbeta{\boldsymbol{\beta}}
\begin{document}

\title{The explicit hypergeometric-modularity  method I}

\author{Michael Allen, Brian Grove, Ling Long, Fang-Ting Tu}
\begin{abstract}
    The theories of hypergeometric functions and modular forms are highly intertwined.  For example, particular values of truncated hypergeometric functions and hypergeometric character sums are often congruent or equal to Fourier coefficients of modular forms.  In this series of papers, we develop and explore an explicit ``Hypergeometric-Modularity" method for associating a modular form to a given hypergeometric datum.  In particular, for certain length three and four hypergeometric data we give an explicit method for finding a modular form $f$ such that the corresponding hypergeometric Galois representation has a subrepresentation isomorphic to the Deligne representation of $f$.  Our method utilizes Ramanujan's theory of elliptic functions to alternative bases, commutative formal group laws, and supercongruences.  As a byproduct, we give a collection of eta quotients with multiplicative coefficients constructed from hypergeometric functions. In the second paper, we discuss a number of applications, including explicit connections between hypergeometric values and periods of these explicit eta quotients as well as evaluation formulae for certain special $L$-values.
\end{abstract}
\maketitle
\tableofcontents

\section{Introduction}\label{ss:intro} 

The Langlands program predicts motivic Galois representations are modular or automorphic. For example, the Galois representation arising from any elliptic curve $E$ defined over $\Q$ is modular. This modularity enables the analytic continuation of the $L$-function of $E$, and consequently the well-definedness of the central $L$-value of $E$---a vital component of the influential Birch and Swinnerton-Dyer conjecture. Among the motivic Galois representations, there is a category consisting of hypergeometric Galois representations.  These hypergeometric Galois representations cover a wide spectrum of cases and can be computed explicitly. Thus, they form a desirable testing ground for experimental results towards open conjectures as in \cite{RRV22}, as well as for developing new theories.  By their motivic nature, these representations can be realized across various arithmetic settings.  Over $\C$, classical hypergeometric functions can be used to evaluate $L$-values and periods of hypergeometric varieties \cite{DPVZ, DKSSVW, LLT}.  Hypergeometric character sums over finite fields yield point-counting formulae \cite{BCM, Win3X, Greene87, LLT2}.  Finally, $p$-adic hypergeometric functions contain deep structural information via supercongruences and Dwork's $p$-adic unit root theory \cite{Allen, Dwork, LTYZ, Mortenson-wt3, Mortenson08, OSZ, Swisher15}.  
We explore these connections throughout this series of papers.
In this first paper we develop our general method for associating modular forms to hypergeometric Galois representations.  Further applications and examples are given in the second paper \cite{HMM2}.

To continue, we first recall the basic notation. Let $a \in \C$ and $n \in \N$. Define the rising factorial $(a)_{n}$ as $(a)_{n} = a(a+1) \cdots (a+n-1)$ with $(a)_{0}=1$. The classical (generalized) $_{n}F_{n-1}$ hypergeometric functions with complex parameters given by multi-sets $\ba \colonequals \left\{r_{1}, \ldots, r_{n}\right\}$ and $\bbeta \colonequals \left\{q_{1} = 1, q_{2}, \ldots, q_{n}\right\}$ and argument $z$ are defined as 
$$\pFq{n}{n-1}{r_{1}&r_{2}&\cdots&r_{n}}{1&q_2&\cdots&q_{n}}{z} \colonequals \sum_{k=0}^{\infty} \frac{ (r_{1})_{k} \cdots (r_{n})_{k}}{(q_{2})_{k} \cdots (q_{n})_{k}} \frac{z^k}{k!}$$
and converge when $|z|<1$.  We note that we take the nonstandard convention to include the extra parameter $q_1=1$ in $\bbeta$.  This parameter corresponds to the $k! = (1)_k$ term on the right-hand side, and so this notation will allow us to better keep track of the contribution of $k!$ to the coefficients of our hypergeometric series.
\par  
Together, we call $\HD = \left\{\ba, \bbeta\right\}$ a \emph{hypergeometric datum} of length $n$.  Throughout we assume the parameters $r_i$ and $q_j$ are rational.  We use the shorthand notations $F(\ba, \bbeta;z)$, $F\left(\begin{array}{c}\ba\\ \bbeta\end{array};z\right)$, or simply $F(\HD; z)$ to denote the corresponding ${}_nF_{n-1}$ function and $F(\ba,\bbeta;z)_{m-1}$ or $F(\HD;z)_{m-1}$ for its truncation after $m$ terms. For a fixed datum $\HD$, we use $M(HD)$ to denote the least positive common denominator of the $r_i$ and $q_j$.

We illustrate the relationship between hypergeometric functions and modular forms using the example $\HD = \left\{\left\{ \frac{1}{2}, \frac{1}{2}, \frac{1}{2}, \frac{1}{2}\right\}, \left\{1, 1, 1, 1\right\}\right\}$.  In \cite{AO00}, Ahlgren and Ono showed
\begin{equation}\label{eq:AO}
H_{p}\left[\begin{matrix}\, \frac{1}{2}&\frac{1}{2}&\frac{1}{2}&\frac{1}{2} \smallskip \\ \, \, 1&1&1&1 \end{matrix} \; ; \; 1 \right] = a_{p}(f_{8.4.a.a})+p,
\end{equation}
for all odd primes $p$, where the $H_{p}$ function is a finite hypergeometric function defined in \cite{McCarthy} and recalled in \eqref{eq:H}, $a_p(f)$ denotes the $p$-th Fourier coefficient of $f$, and the subscript $8.4.a.a$ is the LMFDB \cite{LMFDB} label of the modular form $f$.
In \cite{Kilbourn} Kilbourn proved the following supercongruence for odd primes $p$:
\begin{equation}\label{eq:Kilbourn}
 \pFq43{\,\frac{1}{2}&\frac{1}{2}&\frac{1}{2}&\frac{1}{2} }{ \,\, 1&1&1&1 }1 _{p-1}\equiv a_{p}(f_{8.4.a.a})\pmod{p^3}.   
\end{equation}
Later, Zagier  in \cite{Zagier-top-diff} obtained the following special $L$-value of $f_{8.4.a.a}$ at 2:
\begin{equation}\label{eq:zagier}
  L \left(f_{8.4.a.a}, 2 \right) = \frac{\pi^{2}}{16} \pFq{4}{3}{\,\frac{1}{2}&\frac{1}{2}&\frac{1}{2}&\frac{1}{2}}{\, \,1&1&1&1}{1}.  
\end{equation} 
In the literature, many such modularity results are predicted by modularity--lifting theorems.  However, identifying the precise automorphic targets often results in other useful information. The main result of this paper is an explicit method by which to find the Hecke eigenform corresponding to a given hypergeometric Galois representation as in the previous example.  Using the inductive integral definition for hypergeometric functions (see \cref{eq:inductive}), we derive a modular form $f_{\HD}$ by specializing the parameter of the integrand to be a suitable modular function.  For example,
\begin{equation}
    \pFq{4}{3}{\frac{1}{2}&\frac{1}{2}&\frac{1}{2}&\frac{1}{2}}{1&1&1&1}{1} = \frac{1}{\pi} \int_0^1 \left(\frac{t}{1-t}\right)^{1/2} \pFq{3}{2}{\frac{1}{2}&\frac{1}{2}&\frac{1}{2}}{1&1&1}{t} \frac{dt}{t}.
\end{equation}
The modular form $f_{8.4.a.a}$ then arises from
\begin{equation}
    \label{eq:HG-f8.4.a.a}
   f_{8.4.a.a}(q^{1/2})=\frac 18 \left(\left(\frac{t}{t-1}\right)^{\frac12}\pFq32{\frac12&\frac12&\frac12}{1&1&1}t q\frac{dt}{t{dq}}\right)_{t=-64\left(\frac{\eta(q^2)}{\eta(q)}\right)^{24}},
\end{equation}
where $\eta(q)=q^{1/24}\prod_{n\ge 1}(1-q^n)$ denotes the Dedekind-eta function. When $q$ is specialized to $e^{2\pi i\tau}$ we use $\eta(\tau)$ for $\eta(q)$.
\Cref{thm:main} generalizes this approach to many more length $3$ and $4$ data.  For the reader's benefit, we state a specialization of \Cref{thm:main} for an illustrative family of examples.  Set $\HD(r,s) = \left\{\left\{\frac{1}{2}, \frac{1}{2}, r\right\}, \left\{1, 1, s\right\}\right\}$ for any of the 167 pairs $(r,s)$ in the set
\begin{equation}\label{eq:S}
    \begin{split}
         {\mathbb S}_2 \colonequals \{(r, s)\mid 0<r<s<\frac32, r\neq 1, s\neq \frac12, \, 24s\in \Z\, , \, 8(r+s)\in\Z\}.
    \end{split}
\end{equation}
The Euler Integral formula \eqref{eq:inductive} gives
\[
    F(\HD(r,s); 1) = \frac{1}{B(r, s-r)} \int_0^1 t^r(1-t)^{s-r-1} \pFq{2}{1}{\frac{1}{2} & \frac{1}{2}}{1 & 1}{t} \frac{dt}{t},
\]
where $B(r,s)$ is the beta function, see \cite{AAR}.  For these examples, we let $t$ be the modular lambda function $\lambda(\tau)$ and normalize, which yields the modular form
\begin{align*}
    \BK_2(r,s)(\tau) &\colonequals 2^{1-4r} \l^{r-1}(1-\l)^{s-r-1}\pFq21{\frac12&\frac12}{1&1}{\l} q\frac{d\l}{dq} \\
    &= \frac{\eta \left(\frac{\tau}{2} \right)^{16s-8r-12}\eta(2 \tau)^{8s+8r-12}}{\eta(\tau)^{24s-30}}.
\end{align*}
Setting $N(r) = 48/\gcd(24r, 24)$---twice the denominator of $r$ when written in lowest terms---each $\BK_2(r,s)(N(r)\tau)$ is a weight-three cusp form of level $\frac{48}{N(r)}\frac{48}{N(s-r)}$, see \Cref{ss:K2}.  For a given $(r,s)\in {\mathbb S}_2$ and  $c\in\mathbb Z_{>0}$, let  $(r_c,s_c)\equiv c(r,s) \mod \Z$ be the corresponding conjugate pair in $\mathbb S_2$. That is, let
$r_c$,
be the fractional part of $cr$, which we denote by $\left\{cr\right\}$, and let $s_c=\{cs\}$  if $\{cr\}-{\{cs\}}\le 0$ and $s_c=\{cs\}+1$ otherwise.  For a number field $K$, let $G_K\colonequals\text{Gal}(\overline \Q/K)$ and in particular let $G(M)$ denote $G_{\Q(\zeta_{M})}$. In this setting, our main result \Cref{thm:main} specializes as follows:
\begin{theorem}\label{thm:mainK2version}Given  $(r,s)\in {\mathbb S}_2$, let  $\HD(r,s):=\{\{\frac 12, \frac 12, r\}, \{1,1,s\}\}$, $M=M(\HD(r,s))$, and  $\rho_{\{\HD(r,s);1\}}:G(M)\rightarrow GL_2(\overline \Q_\ell)$ be the associated Katz representation   (cf \Cref{thm:Katz}).  If for all $c\in(\Z/M\Z)^\times$, $(r_c,s_c)\in {\mathbb S}_2 $ and $\BK_2( r_c,s_c)(N(r)\tau)$ lie in the same Hecke orbit, then there is an explicit finite character $ \chi_r$ of $G(M)$   depending on $r$ such that \begin{equation}\label{eq:1.5}
     \left( \chi_r\otimes \rho_{\{\HD(r,s);1\}}\right)|_{G(2M)}\simeq \rho_{f_{\HD(r,s)}^\sharp}|_{G(2M)},
    \end{equation}where $\rho_{f_{\HD(r,s)}^\sharp}$ denotes the Deligne representation of $G_\Q$ associated with a Hecke eigenform $f_{\HD(r,s)}^\sharp$  which can be expressed as 
\begin{equation}
      f_{\HD(r,s)}^\sharp(\tau)=\sum_{c\in (\Z/M\Z)^\times} t_c \cdot \BK_2( r_c,s_c)(N(r)\tau)
\end{equation}where $t_c\in\overline \Q$ are  determined by the first few coefficients of $\BK_2( r,s)(N(r)\tau)$.
\end{theorem}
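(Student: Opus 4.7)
The plan is to proceed in four steps: derive a closed-form expression for $\BK_2(r,s)$, verify that $g_{r,s}(\tau)\colonequals \BK_2(r,s)(N(r)\tau)$ is a weight-three cusp form of the stated level, match the Frobenius traces of the Katz representation with Fourier coefficients of Galois conjugates of $g_{r,s}$ up to the character twist, and assemble a single Hecke eigenform using the orbit hypothesis.

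First I would begin with the Euler integral
\[
F(\HD(r,s);1)=\frac{1}{B(r,s-r)}\int_0^1 t^{r}(1-t)^{s-r-1}\pFq21{\frac12 & \frac12}{1 & 1}{t}\frac{dt}{t},
\]
specialize $t$ to Jacobi's lambda function $\l(\tau)$, and substitute the classical eta-quotient expressions for $\l(\tau)$, $1-\l(\tau)$, $q\,d\l/dq$, together with the weight-one modular form arising from $\pFq21{\frac12 & \frac12}{1 & 1}{\l(\tau)}$ via Ramanujan's theory of alternative bases. A direct computation then produces the stated product formula for $\BK_2(r,s)(\tau)$. Adding the eta exponents confirms weight three; the constraints $24s\in\Z$ and $8(r+s)\in\Z$ defining $\mathbb S_2$ are exactly what is required so that all eta exponents become integers after the $N(r)$ scaling in $\tau$. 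The level $\frac{48}{N(r)}\frac{48}{N(s-r)}$ and the cusp vanishing then follow from a routine order-at-cusp computation (e.g., Ligozat's criterion applied to $g_{r,s}$).

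The heart of the proof is matching Frobenius traces. For a prime $\wp$ of $\Q(\zeta_M)$ of good reduction, \Cref{thm:Katz} expresses $\Tr\rho_{\{\HD(r,s);1\}}(\Frob_\wp)$ as a finite hypergeometric sum attached to $\HD(r,s)$. To compare this with $a_\wp(g_{r,s})$, I would exploit the commutative formal group law attached to the hypergeometric differential underlying the Euler integral, together with Dwork's $p$-adic unit root theory, to establish a supercongruence of the form
\[
\Tr\rho_{\{\HD(r,s);1\}}(\Frob_\wp)\equiv \chi_r(\Frob_\wp)\,a_\wp(g_{r,s}) \pmod{\wp}.
\]
The character $\chi_r$ is forced upon us by the rational exponent $t^{r}(1-t)^{s-r-1}$ in the integrand: specializing $t\mapsto\l(\tau)$ introduces rational powers of eta quotients, which transform under $G(M)$ by a finite-order Kummer character depending only on $r$. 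The further restriction to $G(2M)$ in \eqref{eq:1.5} absorbs the $2$-primary ambiguity inherited from the $\eta(\tau/2)$ factor.

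Finally, since $\BK_2(r,s)$ is naturally defined over $\Q(\zeta_M)$, its $G_\Q$-conjugates are precisely the $\BK_2(r_c,s_c)$ for $c\in(\Z/M\Z)^\times$. The Hecke-orbit hypothesis then permits us to form a common eigenform $f^\sharp_{\HD(r,s)}(\tau)=\sum_c t_c \BK_2(r_c,s_c)(N(r)\tau)$, with $t_c\in\overline\Q$ uniquely pinned down (up to overall scaling) by matching the first few Fourier coefficients of a target Hecke eigenform. The trace identity above, combined with the $G_\Q$-equivariance of the map $c\mapsto (r_c,s_c)$, then shows that $\rho_{f^\sharp_{\HD(r,s)}}|_{G(2M)}$ and $\left(\chi_r\otimes\rho_{\{\HD(r,s);1\}}\right)|_{G(2M)}$ have equal traces on a density-one set of Frobenius conjugacy classes, whence Chebotarev plus Brauer--Nesbitt yields \eqref{eq:1.5}. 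The principal obstacle is the supercongruence in the third step: one must lift the analytic Euler-integral identification of $\BK_2(r,s)$ to an identity of $\ell$-adic Frobenius traces while simultaneously pinning down the precise twist $\chi_r$, and it is exactly for this that the formal-group-law and supercongruence technology previewed in the introduction must be brought to bear.
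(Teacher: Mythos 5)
Your overall route — Euler integral plus REAB to get the eta-quotient formula for $\BK_2(r,s)$, a cusp-form/level check, a CFGL--Dwork comparison of Frobenius traces with Fourier coefficients, and assembly of $f^\sharp_{\HD(r,s)}$ from the Hecke orbit — is essentially the paper's. (The paper itself obtains this theorem by verifying the two hypotheses of its general \Cref{thm:main}: condition (1) is \Cref{lem:E-eta}, i.e.\ $(r,s)\in\mathbb S_2$, and condition (2) is exactly the Galois/Hecke-orbit hypothesis; see \Cref{rem:1/8case}.) However, there is a genuine gap at the decisive step. The congruence you display,
\[
\Tr\rho_{\{\HD(r,s);1\}}(\Frob_\wp)\equiv \chi_r(\Frob_\wp)\,a_\wp(g_{r,s}) \pmod{\wp},
\]
is a congruence modulo $p$, which is all that the commutative formal group law by itself delivers (\Cref{prop:CFGL}). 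You then treat it as a ``trace identity'' and feed it into Chebotarev and Brauer--Nesbitt. But a mod-$p$ congruence between the two sides does not imply equality: for weight three both quantities are algebraic integers of absolute value at most $2p$ in every archimedean embedding, so their difference can be any multiple of $p$ up to $4p$ in size, and Brauer--Nesbitt requires genuine equality of traces (or of characteristic polynomials), not congruence. This is precisely why the paper proves the mod $p^2$ supercongruence \Cref{thm:supercongruences} (via the Gross--Koblitz-type and Dwork-type congruences, whose error terms are shown to vanish when $\gamma(\HD)\le 1$ by the residue-sum computation with the rational function $R(t)$), and then combines it with the Weil--Deligne bound and the integrality supplied by the Hecke-orbit hypothesis: the difference is an integer divisible by $p^2$ and of absolute value $<p^2$ for $p\ge 5$, hence zero. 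Your plan names supercongruences as ``the principal obstacle'' but never states the needed mod $p^2$ strength nor the archimedean-bound-plus-integrality argument that converts the congruence into an equality; without these the final step fails. A secondary imprecision: the forms $\BK_2(r_c,s_c)$ have rational $q$-coefficients, so they are not literally $G_\Q$-conjugates of $\BK_2(r,s)$; the Galois action is on the hypergeometric data and the Katz representations, and the orbit hypothesis is what guarantees that the resulting traces are rational integers (needed for the size argument above) and that a single eigenform $f^\sharp_{\HD(r,s)}$ can be extracted from the span of the $\BK_2(r_c,s_c)(N(r)\tau)$.
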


For example, when $(r,s)=(\frac12,1)$, we have $M(HD)=2$ and $f^{\sharp}_{HD(\frac12,1)}(\tau) = f_{16.3.c.a}(\tau) = \BK_2(\frac12,1)(4 \tau)$. In this case, as  representations of $G_\Q$, $\rho_{\{\HD(\frac12,1);1\}}\simeq \rho_{f_{\HD(\frac12,1)}^\sharp}$. This recovers work of Ahlgren, Ono, and Penniston \cite{AOP}. 
An example 
for a Hecke orbit of size two is $(r,s)=(\frac14,\frac34)$, for which
\begin{equation}\label{eq:K2(1/4,3/4)}
    f^{\sharp}_{\HD(\frac14,\frac34)}(\tau) = f_{32.3.c.a}(\tau) = \BK_2\left(\frac14,\frac34\right)(8 \tau)+4 \sqrt{-1} \BK_2\left(\frac34,\frac54\right)(8 \tau).
\end{equation}
The explicit construction of the $\mathbb{K}_{2}$ functions implies that the  $L$-value corresponding to \eqref{eq:K2(1/4,3/4)} is

\begin{equation*}
\begin{split}
&L(f_{32.3.c.a},1)\\
&= \frac{1}{8} \left(B\left(\frac{1}{4},\frac{1}{2}\right) \cdot \, \pFq{3}{2}{\frac{1}{2}&\frac{1}{2}&\frac{1}{4}}{&1&\frac{3}{4}}{1} + \sqrt{-1} B\left(\frac{3}{4},\frac{1}{2}\right) \cdot \, \pFq{3}{2}{\frac{1}{2}&\frac{1}{2}&\frac{3}{4}}{&1&\frac{5}{4}}{1} \right).
\end{split}
\end{equation*}

\begin{remark}
    \Cref{thm:mainK2version}  can be alternatively stated in terms of finite hypergeometric sums and Fourier coefficients, see \Cref{thm:main} for this formulation.  Further, we note that the restriction of the representations in \eqref{eq:1.5} to $G(2M)$ instead of $G(M)$ is not necessary, but yields a clearer result by removing a sign that appears in \Cref{thm:main}.
    A complete list of $(r,s) \in \mathbb{S}_2$ satisfying the hypotheses of \Cref{thm:mainK2version} can be found in \cite[Table 1]{ENRosenK2} by Rosen. 
\end{remark}

Our main \Cref{thm:main} is more general than \Cref{thm:mainK2version} and yields many applications, see \cite{HMM2, DMgrove, ENRosenK1}.
Our method is based on Ramanujan's theories of elliptic functions to alternative bases (REAB) \Cref{subsec: Ramanujan},  the theory of commutative formal group laws (CFGL) \Cref{ss:CFGL}, hypergeometric character sums and Galois representations \Cref{sec:character sums}, and the residue-sum technique for proving supercongruences \Cref{sec:super}, as in \cite{Allen,LTYZ}.
\subsection*{Acknowledgements} This paper is inspired by the work of Beukers in \cite{Beukers-another} and the authors' earlier discussion with Zudilin.  The authors would like to express their sincere gratitude to Frits Beukers, Henri Darmon, Wen-Ching Winnie Li, Tong Liu, Dermot McCarthy, Esme Rosen, Hasan Saad, Armin Straub, John Voight, Pengcheng Zhang, and Wadim Zudilin for productive conversations and feedback on this project.  Finally, the authors thank the anonymous referees for their thoughtful comments that have led to a much-improved paper.
\par
Grove is partially supported by a summer research assistantship from the Louisiana State University (LSU) Department of Mathematics. Long is supported in part by the Simons Foundation grant  \#MP-TSM-00002492 and the LSU Michael F. and Roberta Nesbit McDonald Professorship; Tu is supported by the NSF grant DMS \#2302531.
\section{Statement of Results}\label{s:2}
\subsection{Notation and Hypergeometric Functions}
Consider the hypergeometric datum 
\[
    \textit{HD}= \{\ba = \{r_{1}, \ldots, r_{n}\}, \bbeta = \{q_1, \ldots, q_{n}\}\}.
\]
Throughout the paper we will assume that all $r_i, q_j \in \Q$ and at least two of the $q_j$ are equal to $1$. We refer to $n$ as the length of the datum.  
We say the pair $(\ba, \bbeta)$ is \emph{primitive} if $r_{i}-q_{j} \notin \Z$ for all $1 \leq i,j \leq n$. The primitive assumption is to make sure the corresponding local system is irreducible, as described in \cite{Beukers-Heckman} by Beukers--Heckman. \par
\subsubsection{Euler's integral representation}
 The discussion of classical hypergeometric functions so far has involved formal power series in $z$ which satisfy a certain Fuchsian differential equation. An alternate perspective on classical hypergeometric functions is from the integral representation of Euler. When $\text{Re}(q_{i})>\text{Re}(r_{i})>0$, there is an inductive formula to construct hypergeometric functions, see \cite[(2.2.2)]{AAR}. {Define $_{1}F_{0}[r_{1};z]\colonequals (1-z)^{-r_{1}}$. Then for $n \geq 2$}
\begin{multline}\label{eq:inductive}
    \pFq{n}{n-1}{\, \, r_{1}&r_{2}&\cdots&r_{n}}{1&q_{2}&\cdots&q_{n}&}{z}=\\ \frac{\G(q_{n})}{\G(r_{n})\G(q_{n}-r_{n})} \int_0^1 t^{r_{n}}(1-t)^{q_{n}-r_{n}-1}  \pFq{n-1}{n-2}{\, \,r_{1}&r_{2}&\cdots&r_{n-1}}{1&q_{2}&\cdots&q_{n-1}}{tz}\frac{dt}{t},
\end{multline}
where $\Gamma(x)$ is the usual gamma function. As we demonstrated in the introduction, this integral representation plays a key role in our method for associating a modular form to a hypergeometric datum.

\subsubsection{Hypergeometric character sums}\label{ss:2.1.4}
In \cite{Win3X}, a character sum $_{n}\mathbb{P}_{n-1}$ is defined inductively, parallel to \eqref{eq:inductive}. Passing from the ring of cyclotomic integers $\Z[\zeta_M]$---where $\zeta_M$ is a primitive $M^{th}$ root of unity---to the finite field setting is done as follows. For each nonzero prime ideal $\wp$ coprime to $M$ and integer $i$ we can associate to the residue field $\kappa_\wp\colonequals\Z[\zeta_M]/\wp$ of size $q$ a character using the $M^{th}$ residue symbol.  We set
\begin{equation}\label{eqn:residue symbol}
    \iota_\wp\left(\frac iM\right)(x) \colonequals \left( \frac x \wp\right)_M^i\equiv x^{(q-1)\frac{i}M} \pmod {\wp}, \quad  \forall x\in \Z[\zeta_M].
\end{equation} 
For example, if $\wp$ is coprime to $2$, $\iota_\wp(1/2)=\phi_\wp$ or simply $\phi$, the quadratic character of the residue field.  Likewise, $\iota_\wp(1)$ is the trivial character, which we denote by $\eps_\wp$ or simply $\eps$.  At times we will want to place the values of this character in $\C_p$, in which case we take  $\iota_\wp\left(\frac 1{q-1}\right)$ to be $\bar{\omega}_q$, where $\omega_q$ is the Teichm\"{u}ller character.  For simplicity, we write $R_i$ for $\iota_\wp(r_i)$, $\overline{R}_i$ for $\iota_\wp(-r_i)$, and define $Q_i$ and $\overline{Q}_i$ analogously.  A finite character sum is obtained by replacing $t^{r_{i}}$ in the complex setting with the character value $ R_{i}(t)$  for $t \in \fq$,  and replacing gamma functions with Gauss sums. 
To be more explicit, for a fixed $\l \in \kappa_\wp^\times$, define
$$
  {}_{1}\mathbb{P}_{0}[R_{1};\lambda;q] \colonequals \ol{R_1}(1-\l).
$$
Then for $n \geq 2$,
  \begin{multline}
    \label{eq:P-by-induction}
   \P(\HD;\l;\wp) = \pPPq{n}{n-1}{R_1& R_2&\cdots &R_{n}}{\eps& Q_2&\cdots &Q_{n}}{\l;q} \\
   \colonequals \sum_{x\in{\kappa_\wp}} R_{n}(x)\overline R_{n} Q_{n}(1-x) \cdot
   \pPPq{n-1}{n-2}{R_1& R_2&\cdots &R_{n-1}}{\eps & Q_2&\cdots &Q_{n-1}}{\l x;q}\\
   = \frac{(-1)^{n}}{q-1} \left(\prod_{i=2}^{n}R_{i}Q_{i}(-1) \right) \sum_{\chi \in \kphat} \binom{R_{1}\chi}{\chi} \binom{R_{2}\chi}{Q_{2}\chi} \cdots \binom{R_{n}\chi}{Q_{n}\chi}\chi(\l),\\ 
\end{multline} 
where $\kphat$ is the character group for $\kappa_{\wp}^{\times}$, $\chi(0)$ is set to be $0$ for any $\chi \in \kphat$, and 
$$\binom{A}{B} \colonequals -B(-1)J(A, \overline{B}) = -B(-1) \sum _{x\in \kappa_\wp} A(x)\overline{B}(1-x),$$
for any characters $A,B$  and  $J(A,B)$ is a Jacobi sum. 
When the length $n$ pair $(\ba,\bbeta)$ is primitive, let
\begin{equation}\label{eq:P-H}
H_q(\ba,\bbeta;\l;\wp)  :  ={(-1)^{n-1}}\mathcal J(\HD;\wp)^{-1} \P(\ba,\bbeta;\l;\wp), 
 \end{equation} where
 \begin{equation}\label{eq:calJ}
  \mathcal J(\HD;\wp)\colonequals   \prod_{i=2}^n -J(\iota_\wp(r_i),\iota_\wp(q_i-r_i)).
 \end{equation}
 If $\bbeta=\{1,\cdots,1\}$, then 
 \begin{equation} \label{eq:nf}
     \mathcal J(\HD;\wp)=\prod_{i=2}^n\iota_\wp(r_i)(-1).
 \end{equation}  If the value $H_q(\ba,\bbeta;\l;\wp)$ is an integer, then it is independent of the choice of generator and so we omit $\omega_p$ from the notation.  Note the $H_{q}(\ba,\bbeta;\l;\mathfrak{p})$ function is  written as $H_q\left [\begin{array}{c}{\ba}\\{\bbeta}\end{array};\l\right]$ in \Cref{ss:intro}. We note that $H_q$ does not depend on the order of the parameters in $\ba$ and $\bbeta$ whereas the $\P$-function does, see \cite{Win3X}. In spite of this advantage to using the $H_q$ function, the $\P$-notation more closely relates to periods in the classical setting, and so yields cleaner results in our method.

Another important condition on hypergeometric data is the property of being \emph{defined over $\Q$}. We say that a multi-set $\ba = \{r_{1}, \ldots, r_{n}\}$ is defined over $\Q$ if 
\[
    \prod_{j=1}^{n} (X - e^{2 \pi i r_{j}}) \in \Z[X].
\]
A hypergeometric datum $\{\ba, \bbeta\}$ is defined over $\Q$ if both $\ba$ and $\bbeta$ are.  In this case, the value of $H_q(\ba,\bbeta;\l; \wp)$ is in $\Q$ for $\l\in \Q$. Important work of Beukers, Cohen, and Mellit \cite{BCM} extends the $H_{q}$ function to almost all prime ideals of $\Z$ for $\HD$ defined over $\Q$.

For convenience throughout the paper, we adopt the notation
\begin{equation}\label{eq:shorthand-quotient}
    f\left(\frac{a_{1}^{r_{1}},\cdots, a_{m}^{r_{m}}}{b_{1}^{s_{1}},\cdots,b_{n}^{s_{n}}}\right)\colonequals\frac{f(a_{1})^{r_{1}}\cdots f(a_{m})^{r_{m}}}{f(b_{1})^{s_{1}}\cdots f(b_{n})^{s_{n}}}
\end{equation}
where $f$ could be the gamma function $\G$, the Pochhammer symbol $(\cdot)_n$, the $p$-adic gamma function $\G_p$, or the Gauss sum $\g$. Similarly, we use $f\left({a_{1}^{r_{1}},\cdots, a_{m}^{r_{m}}}\right)$ to denote $f(a_1)^{r_{1}}\cdots f(a_m)^{r_{m}}$.  Moreover, for a fixed multiset $\ba = \left\{r_1, \hdots, r_n\right\}$, we will use $f(\ba)$ to denote $f(r_1, r_2, \hdots, r_n)$. 
Further, we use the notation $$P_M=\{p\mid p\, \, \text{prime}, p\equiv 1\hspace{-3mm}\pmod{M}\}$$
throughout this section.

\subsection{Main results} \label{ss:mr}

The next statement is motivated by the inductive formula (\ref{eq:inductive}) with $z=1$ and $q_{i}=1$ for $2 \leq i \leq n$. We first define 
\begin{equation}\label{eq:gamma-defn}
    \gamma(\HD) \colonequals -1+\sum_{i=1}^n (q_i-r_i).
\end{equation}
\par
\begin{theorem}\label{thm:main}
Let  $n=3$ or $4$. Assume $\ba^\flat=\{r_1,\cdots,r_{n-1}\}$, where $0<r_1
\le\cdots\le r_{n-1}<1$, and $\bbeta^\flat=\{1,\cdots,1\}$ (with multiplicity $n-1$), with $r_n,q_n$ such that  $0<r_n<q_n\le 1$ and $r_2<q_n$.  Let $\HD=\{\{r_n\}\cup \ba^\flat,\{q_n\}\cup \bbeta^\flat\}$ and $M=M(\HD)$. 
Further assume $\gamma(\HD) \leq 1$ and that
\begin{enumerate}
    \item there exists a modular function $t=C_1 q^{}+O(q^2) \in\Z[[q]]$ such that 
    \begin{equation}\label{eq:f}
        f_{\HD}(q)\colonequals C_1^{-r_n}\cdot t(q)^{r_n}(1-t(q))^{q_n-r_n-1}F(\ba^\flat,\bbeta^\flat;t(q))q\frac{dt(q)}{t(q)dq}
    \end{equation}
    is a congruence weight-$n$ holomorphic cusp form satisfying that, for each prime $p \in P_M$, $T_p f_{\HD}=\tilde b_p  f_{\HD}$ for some $\tilde b_p$  in $\Z$  where $T_p$ is the $p$th Hecke operator;
    \item for any prime ideal $\wp$ in $\Z[\zeta_M]$ above $p\in P_M$,  
    $$\prod_{i=2}^{n-1}\iota_\wp(r_i)(-1)\iota_\wp(r_n)(C_1)^{-1}   \cdot \P\left (\HD;1;\wp\right)\in \Z.$$  
\end{enumerate}
Then there exists a normalized Hecke eigenform $f_{\HD}^\sharp$ built from $ f_{\HD}$, not necessarily unique, such that for each $p>29$ in $P_M$, $\tilde b_p=a_p(f_{\HD}^\sharp)$. More explicitly,  
\begin{equation}\label{eq:2.9}
\begin{split}    
 a_p(f_{\HD}^\sharp)=&   { (-1)^{n-1}} \iota_\wp(r_n)(C_1)^{-1}  \cdot \prod_{i=2}^{n-1}\iota_\wp(r_i)(-1)\cdot \P(\HD;1;\wp)-\delta_{\gamma(\HD)=1}\cdot \psi_{\HD}(p) \cdot p\\
 \\
 =&-\iota_\wp(r_n)(C_1)^{-1} {J(\iota_\wp(r_n),\iota_\wp(q_n-r_n))}\cdot H_p(\HD;1;\wp)-\delta_{\gamma(\HD)=1}\cdot \psi_{\HD}(p) \cdot p.
\end{split}
\end{equation} 
Here $\delta_{\gamma(\HD)=1}$ is equal to 1 when ${\gamma(\HD)=1}$ and is 0 otherwise and
\begin{equation}\label{eq:sgn}
   \psi_{\HD}(p) \equiv (-1)^{n-1}\cdot{C_1^{(p-1)r_n}}\G_p\left(\frac{ q_n-r_n}{\ba^\flat}\right) \pmod p. 
\end{equation}
\end{theorem}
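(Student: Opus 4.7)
My plan is to prove the theorem by first establishing the existence of $f_{\HD}^\sharp$ from general modular-form theory and then reducing the theorem to a supercongruence computation that directly matches the $p$-th Fourier coefficient against the character-sum expression in \eqref{eq:2.9}.

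The first step is to produce $f_{\HD}^\sharp$. The hypothesis that $f_{\HD}$ is a weight-$n$ holomorphic cusp form on some congruence subgroup, satisfying $T_p f_{\HD}=\tilde b_p f_{\HD}$ for all $p\in P_M$ with $\tilde b_p\in\Z$, puts $f_{\HD}$ in a finite-dimensional space on which the Hecke algebra acts semisimply. After decomposing $f_{\HD}$ along the Hecke eigensystems coming from all primes (not just those in $P_M$), we select a normalized eigenform $f_{\HD}^\sharp$ in the Hecke orbit of $f_{\HD}$ whose eigenvalues at $p\in P_M$ coincide with $\tilde b_p$. The nonuniqueness comes from potential old/new ambiguity, but will not affect the identity at the primes considered.

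The second and essential step is a congruence computation. Writing $f_{\HD}(q)=\sum_{m\ge 1}c_m q^m$, the eigenvalue condition forces $c_p=\tilde b_p$ (after normalization absorbed in $C_1^{-r_n}$). Using the definition \eqref{eq:f}, I expand
\[
    f_{\HD}(q)=C_1^{-r_n} t^{r_n}(1-t)^{q_n-r_n-1}\sum_{k\ge 0}\frac{(r_1)_k\cdots(r_{n-1})_k}{k!^{n-1}}t^k\cdot q\frac{dt}{t\,dq},
\]
and observe, via the logarithmic-derivative form, that extracting the coefficient of $q^p$ is essentially a residue operation. On the character-sum side the inductive formula \eqref{eq:P-by-induction} expresses $\P(\HD;1;\wp)$ as an analogous finite residue against $R_n(x)\overline{R}_nQ_n(1-x)$ times the length $n-1$ $\P$-function. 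The core claim to establish is
\[
    c_p\equiv (-1)^{n-1}\iota_\wp(r_n)(C_1)^{-1}\prod_{i=2}^{n-1}\iota_\wp(r_i)(-1)\cdot\P(\HD;1;\wp)\pmod{p^2}
\]
(with an extra $-\psi_{\HD}(p)\cdot p$ correction in the $\gamma(\HD)=1$ case). I will prove this by applying the residue-sum supercongruence technique of \cite{Allen, LTYZ} developed in Section \ref{sec:super}: the rising factorials $(r_i)_k$ are replaced $p$-adically by ratios of $p$-adic gamma functions via Gross--Koblitz, and the modular function $t(q)$ has the Dwork-unit-root property via CFGL theory (Section \ref{ss:CFGL}), which permits truncation of the $k$-sum at $k=p-1$ and the conversion to a sum over $\kappa_\wp^\times$. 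Ramanujan's REAB machinery (Section \ref{subsec: Ramanujan}) supplies the precise normalization constant $C_1$ through the period-integral identity realizing $t(q)$ as a Hauptmodul.

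The third step is to pin down the correction $-\psi_{\HD}(p)\cdot p$ when $\gamma(\HD)=1$. In this critical case the exponents $r_n$ and $q_n-r_n$ in the integrand give $(-1)+r_n+(q_n-r_n-1)+\sum_{i<n}(1-r_i)=0$ after the Euler-integral shift, so the boundary of the $p$-adic Dwork region contributes a single Eisenstein-type residue whose leading behavior is captured by the coefficient of $t^{p-1}$ in $t^{r_n}(1-t)^{q_n-r_n-1}F(\ba^\flat,\bbeta^\flat;t)$; a standard binomial manipulation together with $p$-adic Gauss-sum/Gamma evaluations produces exactly the quantity $C_1^{(p-1)r_n}\G_p\left(\frac{q_n-r_n}{\ba^\flat}\right)$ up to sign, yielding \eqref{eq:sgn}. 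The passage between the $\P$-form and the $H_p$-form of \eqref{eq:2.9} is then a direct application of \eqref{eq:P-H} and \eqref{eq:calJ}.

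The hardest part will be the second step: controlling the mod $p^2$ discrepancy between the $q$-coefficient of the modular form and the finite character sum. This requires that the residue-sum argument be carried out simultaneously in every $\wp\mid p$ and matches the constant $\iota_\wp(r_n)(C_1)^{-1}\prod_{i=2}^{n-1}\iota_\wp(r_i)(-1)$ arising from the Jacobi-sum normalization; verifying that the Dwork-cycle argument applied to $t(q)$ gives no residual contribution beyond $\psi_{\HD}(p)\cdot p$ (and only when $\gamma(\HD)=1$) is where the length-$3$ and $4$ hypothesis and the hypotheses on $r_n,q_n$ get used.
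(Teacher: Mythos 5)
Your outline assembles the right computational ingredients (Euler's integral, REAB, the CFGL isomorphism, Gross--Koblitz, and the residue-sum supercongruences), and your second step is indeed the engine of the paper's argument. But there is a genuine gap: you stop at a congruence modulo $p^2$, whereas the theorem asserts an exact equality $\tilde b_p = a_p(f_{\HD}^\sharp)$ and an exact identity \eqref{eq:2.9}. Nothing in your proposal converts the congruence into an equality. The paper does this by combining three facts: condition (2) guarantees that $(-1)^{n-1}\chi_{\HD}(\wp)\P(\HD;1;\wp)-\delta_{\gamma(\HD)=1}\psi_{\HD}(p)p$ is a rational integer; Katz's theorem (part (iii) of \Cref{thm:Katz}) bounds its archimedean absolute value by $3p^{3/2}+p$ when $n=4$ (resp.\ $2p$ when $n=3$); and Deligne's bound gives $|a_p(f_{\HD}^\sharp)|\le 2p^{(n-1)/2}$. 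Two integers that agree modulo $p^2$ and differ by less than $5p^{3/2}+p$ must be equal once $p\ge 29$ --- this is exactly where the hypothesis $p>29$ in the statement comes from, and your proposal never accounts for it.

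A closely related omission is that you make no use of the hypergeometric Galois representation at all. Besides supplying the archimedean bounds above, Katz's theorem is what justifies the shape of the correction term in the $\gamma(\HD)=1$, $n=4$ case: the $3$-dimensional representation $\chi_{\HD}\otimes\rho_{\{\HD;1\}}$ is self-dual and reducible, hence splits off a $2$-dimensional piece carrying a non-degenerate alternating pairing plus a $1$-dimensional piece $\psi_{\HD}\otimes\epsilon_\ell$ with $\psi_{\HD}$ of finite order, so that $\psi_{\HD}(p)=\pm1$ \emph{a priori}. Your third step would only produce $\psi_{\HD}(p)$ as a residue class modulo $p$, which does not determine the exact quantity $\psi_{\HD}(p)\cdot p$ appearing in \eqref{eq:2.9} unless one already knows $\psi_{\HD}(p)$ is a sign; the congruence \eqref{eq:sgn} pins it down only in conjunction with that structural input. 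To repair the proposal you would need to add the representation-theoretic step (Propositions \ref{prop:extension} and \ref{prop:4.3}) and the final bounding argument.
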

In terms of Galois representations, this is equivalent to 
\begin{equation}\label{eq:main-Galois}
   \rho_{f_{\HD}^\sharp}|_{G(M)}\simeq \chi_{\HD}\otimes \rho_{\{\HD;1\}} -\delta_{\gamma(\HD)=1}\cdot \psi_{\HD}\epsilon_\ell|_{G(M)},
\end{equation} where $\epsilon_\ell$ denotes the cyclotomic character and $$\chi_{\HD}(\wp)
=\iota_\wp(r_n)(C_1)^{-1}\cdot \prod_{i=1}^{n-1}\iota_\wp(r_i)(-1).$$

\begin{figure}[ht]
    \begin{center}
    \begin{tikzcd}
         & \boxed{\HD^\flat, (r_n=m/e, q_n)} \arrow[d, "\eqref{eq:f}"] &  \\
         & \omega(t)dt/t \arrow[dl, swap, "t=t(q) \text{ by REAB}"] \arrow[dr, "t=u^e"] &  \\
        f_{\HD}(q)dq \arrow[dd, swap, "T_p" {yshift=-5pt}] &  & \omega(u^e)du^e/u^e \arrow[dl, "p^{th} \hspace{1ex} u-\text{coeff } \eqref{eq:u}" ]\\&\text{Truncated HGS}\arrow[dl, "\text{CFGL \Cref{prop:CFGL-isom}}"] &\\
      \tilde b_p 
          &&  \chi_{HD} \P(\HD; 1) \arrow[ll, " \hspace{1ex} \text{Agree modulo } p^2 \text{ by residue sum}" {yshift=-5pt}] \arrow[ul, swap, "\text{Gross--Koblitz \Cref{thm:gk}}"] 
    \end{tikzcd}
\end{center}
    \caption{A visual outline of the proof of \Cref{thm:main}}
    \label{fig:main idea}
\end{figure}

The key idea is that the right-hand side $f(t) = \omega(t)dt/t$ of \eqref{eq:f} is of the form of the integrand in Euler's integral formula \eqref{eq:inductive}.  We specialize this function two ways.  First, using Ramanujan's theory of elliptic functions to alternative bases and letting $t$ be the modular function $t(q)$ in \Cref{thm:main} yields a modular form $f_{\HD}$.  Then, with $e$ denoting the denominator of $r_n$ in lowest terms, we write $f(t)$ as an expansion in $u = t^e$.  By the Euler integral formula, the coefficients of this expansion will be terminating  hypergeometric functions, see \Cref{ss:FGL}.  The commutative formal group law  
yields a congruence modulo $p$.  The mod $p^2$ supercongruences given below in \Cref{thm:supercongruences}, the Weil--Deligne bounds, and the integrality of the Fourier coefficients then allow us to strengthen this congruence to our desired equality.  
\begin{example}\label{eg:2.3}
  For  $\HD^\flat=\{\{\frac12,\frac12,\frac12\},\{1,1,1\}\}$ and $(r_4,q_4)=(\frac12,1)$, we set $t=-64\frac{\eta(q^2)^{24}}{\eta(q)^{24}}$. 
  Then $ f_{\HD}(q)=f_{8.4.a.a}(q^{1/2})$ as in \eqref{eq:HG-f8.4.a.a}. As this is already a Hecke eigenform after $\tau \mapsto 2\tau$, we have $f_{\HD}^\sharp=f_{8.4.a.a}$. In this case  $\gamma(\HD)=1$, and the normalizing factor in \eqref{eq:2.9} in front of  $H_p$ is $$-\iota_\wp(1/2)(-64)^{-1} J(\iota_\wp(1/2),\iota_\wp(1/2))=-\phi(-64)J(\phi,\phi)=1,$$   
  and $$\psi_{\HD}(p)=\phi(-1) (-1)^3\cdot  \G_p\left(\frac{\frac12}{\frac12,\frac12,\frac12}\right)\overset{\eqref{eq:p-gamma-reflect}}=-\phi(-1)(-1)^{(p+1)/2}=1.$$ Thus, \eqref{eq:2.9} says for any odd prime $p$
  $$a_p(f_{8.4.a.a})=H_p(\HD;1)-p,$$ recovering \eqref{eq:AO}.
\end{example}

We now use \Cref{thm:main} and the same modular function as the previous example, to obtain the following new modularity result for a datum of length four. 

\begin{proposition}\label{prop:6.6}For each prime 
$p \equiv 1 \pmod{4}$
    $$a_p(f_{32.4.a.a})= - {\mathbb P}\left(\bigg\{\frac12,\frac12,\frac12,\frac14\bigg\},\bigg\{1,1,1,\frac34\bigg\};1;\mathfrak p \right)-p.$$
\end{proposition}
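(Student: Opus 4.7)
The plan is to apply \Cref{thm:main} to the length-four datum
$\HD=\{\{\tfrac12,\tfrac12,\tfrac12,\tfrac14\},\{1,1,1,\tfrac34\}\}$,
whose truncation $\HD^\flat$ is the length-three datum of \Cref{eg:2.3}, with $(r_4,q_4)=(\tfrac14,\tfrac34)$, $M(\HD)=4$ so that $P_M=\{p:p\equiv 1\pmod 4\}$, and $\gamma(\HD)=-1+4\cdot\tfrac12=1$ so the correction term is active.

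I would reuse the modular function $t(q)=-64(\eta(q^2)/\eta(q))^{24}$ from \Cref{eg:2.3}, so $C_1=-64$. Substituting into \eqref{eq:f} and invoking the length-three identity
$(t/(t-1))^{1/2}F(\HD^\flat;t)\,q\,dt/(t\,dq)=8f_{8.4.a.a}(q^{1/2})$
from \Cref{eg:2.3}, a short eta-quotient manipulation based on the identity $t^{-1/4}=(-64)^{-1/4}(\eta(q)/\eta(q^2))^{6}$ produces, up to a fixed fourth root of unity,
\[
f_{\HD}(q)=\bigl(\eta(q)/\eta(q^2)\bigr)^{6}\,f_{8.4.a.a}(q^{1/2}).
\]
After $\tau\mapsto 4\tau$ this becomes $(\eta(4\tau)/\eta(8\tau))^{6}f_{8.4.a.a}(2\tau)$, a weight-four holomorphic cusp form on $\Gamma_0(32)$ whose $q$-expansion begins $q-10q^5+37q^9-\cdots$. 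Matching coefficients against $f_{32.4.a.a}$ up to the Sturm bound for $S_4(\Gamma_0(32))$ then identifies $f_{\HD}^\sharp=f_{32.4.a.a}$ and verifies hypothesis (1) of \Cref{thm:main}.

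For hypothesis (2) the normalizing factor appearing in \eqref{eq:2.9} simplifies as
\[
(-1)^{n-1}\iota_\wp(\tfrac14)(-64)^{-1}\prod_{i=2}^{3}\iota_\wp(\tfrac12)(-1)=-\iota_\wp(\tfrac14)(-64)^{-1}\cdot\phi(-1)^{2}=-\iota_\wp(\tfrac14)(-64)^{-1}.
\]
Splitting $p\equiv 1\pmod 4$ into the two subcases modulo $8$ and using $\iota_\wp(\tfrac14)(x)\equiv x^{(p-1)/4}\pmod\wp$ together with $2^{(p-1)/2}\equiv\phi(2)\pmod p$, one checks that $\iota_\wp(\tfrac14)(-64)=1$ in each subcase, so the normalizing factor equals $-1$; the required integrality then follows from the integrality of the rational Fourier coefficients of $f_{32.4.a.a}$.

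Finally, \eqref{eq:sgn} together with $\Gamma_p(\tfrac12)^{2}=(-1)^{(p+1)/2}$ from \eqref{eq:p-gamma-reflect} gives
\[
\psi_{\HD}(p)\equiv -(-64)^{(p-1)/4}\cdot(-1)^{(p+1)/2}\pmod p.
\]
For $p\equiv 1\pmod 4$ the exponent $(p+1)/2$ is always odd, and the mod-$8$ case analysis above shows $(-64)^{(p-1)/4}\equiv 1\pmod p$, yielding $\psi_{\HD}(p)=1$. Inserting the normalizing factor $-1$, the $\P$-value, and the correction $-\psi_{\HD}(p)\,p=-p$ into \eqref{eq:2.9} delivers the stated identity. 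The main obstacle is the first step: extracting the clean eta-quotient form of $f_{\HD}$, performing the substitution $\tau\mapsto 4\tau$ so that the $q^{1/4}$ coming from the $t^{1/4}$ factor becomes an integer power, verifying modularity and cuspidality on $\Gamma_0(32)$, and pushing the Fourier-coefficient match with $f_{32.4.a.a}$ through the Sturm bound.
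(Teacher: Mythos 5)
Your overall strategy matches the paper's: apply \Cref{thm:main} with $\HD^\flat=\{\{\frac12,\frac12,\frac12\},\{1,1,1\}\}$, $(r_4,q_4)=(\frac14,\frac34)$, and $t=-64\eta(q^2)^{24}/\eta(q)^{24}$, and your computations of $\gamma(\HD)=1$, the normalizing factor $-1$, and $\psi_{\HD}(p)=1$ are all correct. However, your verification of hypothesis (2) is circular, and this is the crux of the example. Condition (2) demands that $\prod_{i=2}^{3}\iota_\wp(r_i)(-1)\,\iota_\wp(r_4)(C_1)^{-1}\cdot\P(\HD;1;\wp)\in\Z$ as a \emph{hypothesis} to be checked before the theorem can be invoked; you deduce it ``from the integrality of the rational Fourier coefficients of $f_{32.4.a.a}$,'' which is exactly the conclusion you are trying to reach. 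Because this datum is \emph{not} defined over $\Q$ (unlike \Cref{eg:2.3}), a priori $\P(\HD;1;\wp)$ only lies in $\Z[i]$, and its integrality requires an independent argument. The paper supplies this via a character sum identity: the complex conjugate of $\P(\HD;1;\wp)$ is $\P(\overline{\HD};1;\wp)$ with $\overline{\HD}=\{\{\frac12,\frac12,\frac12,\frac34\},\{1,1,1,\frac14\}\}$, and Proposition 1 of \cite{LLT2} gives $\P(\overline{\HD};1;\wp)=\P(\HD;1;\wp)$, so the sum is real and hence in $\Z$. Without this (or an equivalent) input your argument does not close.

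A secondary issue: the rescaled $f_{\HD}$ is $\eta(q^4)^{10}/\eta(q^8)^2$, which is \emph{not} equal to $f_{32.4.a.a}=\eta(q^4)^{10}/\eta(q^8)^2-8\,\eta(q^8)^{10}/\eta(q^4)^2$, so a Sturm-bound comparison would actually show the two forms differ (in the coefficients supported on exponents $\equiv 3\pmod 4$, which come from the conjugate datum $\overline{\HD}$). What you need, and what condition (1) actually asks for, is that $f_{\HD}$ is an eigenform for $T_p$ with $p\equiv 1\pmod 4$ — true because the two eta quotients span a Hecke-stable space and have disjoint exponent supports modulo $4$ — with eigenvalues $\tilde b_p=a_p(f_{32.4.a.a})$ for such $p$. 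The eigenform $f_{\HD}^\sharp$ is then \emph{built from} $f_{\HD}$ and $f_{\overline{\HD}}$, not identified with $f_{\HD}$ itself.
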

\begin{proof}Let  $\HD^\flat=\{\{\frac12,\frac12,\frac12\},\{1,1,1\}\}$, $(r_4,q_4)=(\frac14,\frac34)$ and $t=-64\frac{\eta(q^2)^{24}}{\eta(q)^{24}}$. 
In this case, $f_{\HD}(q)=\frac{\eta(q)^{10}}{\eta(q^2)^2}$ satisfies condition (1) of \Cref{thm:main} and $f_\HD^\sharp$ can be taken as
   $$f_{\HD}^\sharp(q)=f_{32.4.a.a}(q)=\frac{\eta(q^4)^{10}}{\eta(q^8)^2}-8\frac{\eta(q^8)^{10}}{\eta(q^4)^2}.$$ 
   Note that $\P(\HD;1;\wp) \in \Z[i]$ and its complex conjugate is $\P(\overline \HD;1;\wp)$,  where $\overline{HD} = \{ \{\frac{1}{2},\frac{1}{2},\frac{1}{2},\frac{3}{4}\},\{1,1,1,\frac{1}{4}\}\}$. Using Proposition 1 of \cite{LLT2}, we have $\P(\overline \HD;1;\wp)=\P(\HD;1;\wp)$. 
   Thus condition (2) of \Cref{thm:main}  is also satisfied. Here $\psi_{\HD}(p)=1$, similar to the previous example.
\end{proof} 
We remark that the form $\frac{\eta(q^8)^{10}}{\eta(q^4)^2}$ arises from the datum $\ol{\HD}$.  The combinations of $f_{\HD}$ and $f_{\ol{\HD}}$ give Hecke eigenforms $f_{32.4.a.a}$ and  $f_{32.4.a.c}$ which differ by a quadratic character of conductor 4.  One important difference between \Cref{eg:2.3} and \Cref{prop:6.6} is that the hypergeometric datum in \Cref{prop:6.6} is not defined over $\Q$. Condition (1) of \Cref{thm:main} is satisfied for both cases by construction. Now Condition (2) of \Cref{thm:main} is automatically satisfied in \Cref{eg:2.3}, as the datum is defined over $\Q$. However, in \Cref{prop:6.6} a character sum identity, such as Proposition 1 of \cite{LLT2}, implies Condition (2) of \Cref{thm:main}, when $p \equiv 1 \pmod{4}$.

In the proof of \Cref{thm:main}, we rely on a more general  $p$-adic  supercongruence result. 
To apply $p$-adic techniques, we embed $\P(\HD;1;\wp)$ into $\C_p$ through the Teichm\"{u}ller character $\omega_q$, and as we are considering prime ideals above splitting primes we may take $q=p$. Further, under the assumption of $\wp$ being above $p \in P_M$, all values of the characters of $\iota_\wp(r_i)$ and $\iota_\wp(q_i)$ can be embedded into $\Z_p$.  In this case we write the embedding of $H_p(\HD;1;\wp)$ into $\Z_p$ as $H_p(\HD;1;{ \bar \omega_p})$, namely 
\begin{equation}\label{eq:Hp-embedded}
    H_p(\HD;1;{ \bar \omega_p}) = H_p(\HD;1;\wp),  \quad \mbox{where } \iota_\wp\left(\frac 1{p-1}\right)={ \bar \omega_p}.
\end{equation}
Likewise, we use $J_{\bar \omega_p}(a,b)$ for the embedding of $J(\iota_\wp(a),\iota_\wp(b)).$ 
\subsection{Supercongruences}
\subsubsection{Supercongruence background}
A key step in the proof of \Cref{thm:main} involves proving supercongruences to strengthen the relationship between the Fourier coefficients $a_p(f_{\HD}^\sharp)$, truncated hypergeometric functions, and hypergeometric character sums arising from the commutative formal group law.  We do this by proving two constituent congruences.  The first, which we say is of `Gross--Koblitz type', gives a congruence modulo $p$ between the character sum $H_p$ and the corresponding hypergeometric series truncated at $p-1$.  The main tool used to establish this congruence is the Gross--Koblitz formula, which we recall below in \Cref{thm:gk}.  Next, the bridge from the truncated hypergeometric sum to the Fourier coefficients $a_p(f^{\sharp}_{\HD})$ is attained through a formal group isomorphism from $f_{\HD}$ to the truncated hypergeometric series as in \Cref{prop:CFGL-isom} and then  by a `Dwork type' supercongruence, wherein we relate our truncated hypergeometric function to the corresponding Dwork unit root introduced in \cite{Dwork}.  We discuss this Dwork unit root and its relationship to Katz's Galois representations briefly in \Cref{ss:Dwork}. In particular, this root can be realized as the $p$-adic embedding of a particular root of the characteristic polynomial of Frobenius of $\rho_{\left\{\HD;1\right\}}$ over $p$, see for example \Cref{rem:char-sum-as-trace}.  For both congruences, we explicitly compute the error to a supercongruence modulo $p^2$, and give criteria under which this error is guaranteed to vanish modulo $p^2$ in terms of the invariant $\gamma(\HD)$ defined in \eqref{eq:gamma-defn}.  Although \Cref{thm:main} only considers $\HD$ of length $3$ or $4$, the methods to establish supercongruences for these cases can be quickly generalized, and so we prove corresponding supercongruences for a much larger collection of hypergeometric data.
\begin{theorem}\label{thm:supercongruences} 
Let $\HD = \left\{\ba, \bbeta\right\}$ where $\ba=\{r_1,\hdots,r_n\},\bbeta=\{q_1,  \hdots, q_n\}$ with $r_i,q_i\in\Q\cap (0,1]$ satisfying
\begin{equation}   \label{eq:*condition} 
    0 < r_1\le r_2 \leq \cdots \le r_n < 1, \quad 0 < q_1 \leq \cdots \leq q_{n-2} \leq q_{n-1} = q_n = 1, \quad q_{i} > r_{i+2} .
\end{equation}
Let $\omega_p$ be the Teichm\"{u}ller character of the finite field $\F_p$.   When $\gamma(\HD)\le 1$ and $\lambda = 1$, for each prime $p\equiv 1\pmod{M(\HD)}$ greater than $n+1$ which is ordinary---meaning that the truncation $F(\ba, \bbeta; 1)_{p-1}$ of $F(\ba, \bbeta; 1)$ after $p$ terms is not divisible by $p$---we have
\begin{equation}\label{eq:super-combined}
    H_p(\ba, \bbeta; 1; \bar{\omega}_p) - \delta_{\gamma(\HD)=1}\G_p\left(\frac{\bbeta}{\ba}\right) p \equiv F(\ba, \bbeta; 1)_{p-1} \equiv \mu_{\ba, \bbeta, 1, p} \pmod{p^2},
\end{equation} 
where $\mu_{\ba, \bbeta, 1, p}$ is the Dwork unit root defined below in \eqref{eq:Dwork-unit-root-congruence}.
\end{theorem}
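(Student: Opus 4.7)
The strategy is to decompose the supercongruence \eqref{eq:super-combined} into two constituent pieces: a \emph{Gross--Koblitz type} congruence
\[
H_p(\ba, \bbeta; 1; \bar{\omega}_p) - \delta_{\gamma(\HD)=1}\G_p\left(\frac{\bbeta}{\ba}\right) p \equiv F(\ba, \bbeta; 1)_{p-1} \pmod{p^2}
\]
bridging the hypergeometric character sum and the truncated classical hypergeometric series, and a \emph{Dwork type} congruence
\[
F(\ba, \bbeta; 1)_{p-1} \equiv \mu_{\ba, \bbeta, 1, p} \pmod{p^2}
\]
identifying the truncated sum with the Dwork unit root. The hypothesis $p \equiv 1 \pmod{M(\HD)}$ guarantees that all characters $\iota_\wp(r_i), \iota_\wp(q_j)$ are defined over $\F_p$ via the Teichm\"{u}ller character $\omega_p$, so that both sides of each congruence live naturally in $\Z_p$, and in particular the condition $p > n+1$ keeps the indices in the residue expansion separated from the poles of $\G_p$.

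For the first congruence, I would begin from the Jacobi-sum expansion of $\P(\HD;1;\wp)$ in \eqref{eq:P-by-induction} and apply the Gross--Koblitz formula (\Cref{thm:gk}) to rewrite each Jacobi-sum factor as a product of $p$-adic gamma values weighted by powers of a uniformizer $\pi$ with $\pi^{p-1} = -p$. After reindexing the sum over characters $\chi \in \widehat{\F_p^\times}$ by $\chi = \bar{\omega}_p^{-k}$ for $0 \le k \le p-2$, the resulting expression is compared term-by-term with the expansion of $F(\ba,\bbeta;1)_{p-1}$ after rewriting each Pochhammer symbol $(a)_k$ as a ratio of $\G_p$ values. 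The difference modulo $p^2$ is then controlled by local Taylor expansions of $\G_p$ at each parameter $r_i, q_j$. The residue-sum bookkeeping of \cite{Allen, LTYZ} organizes this expansion by separating the $O(1)$ contributions---which reassemble into $F(\ba,\bbeta;1)_{p-1}$---from the $O(p)$ contributions. The hypothesis $\gamma(\HD) \le 1$ ensures that essentially one first-order residue survives: when $\gamma(\HD) < 1$ the $O(p)$ contributions cancel pairwise, while when $\gamma(\HD) = 1$ a single boundary residue persists, which the reflection formula for $\G_p$ identifies with $\G_p(\bbeta/\ba)\,p$.

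For the second congruence, the Dwork unit root $\mu_{\ba,\bbeta,1,p}$ is characterized via \eqref{eq:Dwork-unit-root-congruence} as the stable $p$-adic limit of ratios $F(\ba,\bbeta;1)_{p^{s+1}-1}/F(\ba,\bbeta;1)_{p^s-1}$. Because $p \equiv 1 \pmod{M(\HD)}$, the Frobenius action on the parameter multisets fixes $\ba$ and $\bbeta$, so Dwork's congruence collapses to this diagonal form. The ordinarity hypothesis---that $F(\ba,\bbeta;1)_{p-1}$ is a $p$-adic unit---permits the $s = 0$ case of Dwork's congruence to be inverted and upgraded to the stated congruence modulo $p^2$, with the mod $p^2$ refinement using the $p$-adic analyticity of the Frobenius structure on the hypergeometric crystal at an ordinary point.

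The main obstacle is the explicit bookkeeping in the first congruence. Tracking precisely which terms in the Gross--Koblitz expansion of $H_p$ contribute at order $p^0$ versus order $p^1$ requires careful combinatorial accounting of the $\G_p$-expansions under the ordering and interlacing hypothesis \eqref{eq:*condition}. Verifying that $\gamma(\HD) \le 1$ is exactly the condition ruling out spurious $O(p)$ contributions, and that the single surviving residue in the boundary case $\gamma(\HD) = 1$ assembles cleanly into $\G_p(\bbeta/\ba)\,p$, constitutes the principal technical work; the interlacing condition $q_i > r_{i+2}$ together with $q_{n-1} = q_n = 1$ is what makes the residue telescoping possible.
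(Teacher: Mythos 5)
Your overall architecture matches the paper's: the same decomposition into a Gross--Koblitz type congruence and a Dwork type congruence (cf.\ \Cref{rem:super-pieces}), and your treatment of the first half---Gross--Koblitz applied to the Gauss/Jacobi sum expansion of the character sum, Pochhammer symbols rewritten via $\G_p$, local expansions of $\G_p$ controlling the mod $p^2$ discrepancy, and a residue-sum computation in the style of \cite{Allen,LTYZ} that isolates a single surviving residue exactly when $\gamma(\HD)=1$---is essentially the argument of \Cref{lem:GK-congruence} and \Cref{prop:GK-error}.

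The gap is in the second half. You claim that ordinarity lets you ``invert and upgrade'' the $s=0$ case of Dwork's congruence to $F(\ba,\bbeta;1)_{p-1}\equiv \mu_{\ba,\bbeta,1,p}\pmod{p^2}$, with the refinement coming from ``$p$-adic analyticity of the Frobenius structure.'' This does not work: Dwork's congruence \eqref{eq:Dwork-unit-root-congruence} at $s=0$ only gives $F(\ba,\bbeta;1)_{p-1}\equiv\mu_{\ba,\bbeta,1,p}\pmod{p}$, and the mod $p^2$ statement is itself a genuine supercongruence that fails for general $\HD$ and general $\l$---no soft analyticity argument produces it. The paper's route is to prove a Dwork-type congruence with an explicit $p$-linear error term,
\[
F_{s+1}(\ba,\bbeta;\l)-p\l^pF_s'(\ba,\bbeta;\l^p)E_{\mathrm{Dwork}}(\ba,\bbeta;\l)\equiv F_s(\ba,\bbeta;\l^p)F_1(\ba,\bbeta;\l)\pmod{p^2}
\]
(\Cref{lem:Dwork-congruence}), and then to run a \emph{second} residue-sum computation (\Cref{prop:Dwork-error}, using the same rational function $R(t)$ whose shifted version $tR(t)$ governs the Gross--Koblitz error) showing $E_{\mathrm{Dwork}}(\ba,\bbeta;1)\equiv 0\pmod p$ precisely because $\gamma(\HD)\le 1$ forces the residue of $R(t)$ at infinity to vanish. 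Only then does $F_{s+1}/F_s\equiv F_1\pmod{p^2}$ hold for all $s$, so that letting $s\to\infty$ identifies $F(\ba,\bbeta;1)_{p-1}$ with $\mu_{\ba,\bbeta,1,p}$ modulo $p^2$. Relatedly, you attribute the role of the hypothesis $\gamma(\HD)\le 1$ entirely to the Gross--Koblitz half; in fact it is needed separately, and for the same structural reason, in the Dwork half.
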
 
\begin{remark}
    In the proof of this Theorem in \Cref{sec:super}, we will in fact show that the assumption $p > n+1$ can be reduced further to $p > \hat{n}+1$, where $\hat{n}$ is the total number of $q \in \bbeta$ which are not equal to $1$.  Computations suggest that no such hypothesis is necessary, and that the supercongruence holds for all $p \equiv 1 \pmod{M(\HD)}$ in most cases.
\end{remark}
\begin{remark}\label{rem:super-pieces}
    As we noted above, the supercongruence \eqref{eq:super-combined} arises from two supercongruences, the Gross--Koblitz type supercongruence 
    \[
        H_p(\ba, \bbeta; \l; \bar{\omega}_p) - E_{\mathrm{GK}}(\ba, \bbeta; \l^p)p \equiv F (\ba, \bbeta; \l^p)_{p-1} \pmod{p^2},
    \]
    and, using $F_s(\ba, \bbeta; \lambda)$ to denote the truncation of $F(\ba, \bbeta; \lambda)$ at $p^s-1$, the Dwork-type congruence
    \[
        F_{s+1}(\ba, \bbeta; \l) - p \l^p F_s'(\ba, \bbeta; \l^{p}) E_{\mathrm{Dwork}}(\ba, \bbeta; \l) \equiv F_s (\ba, \bbeta; \l^{p}) F_1(\ba, \bbeta; \l) \pmod{p^2},
    \]
    both valid at primes $p \equiv 1 \pmod{M}$.
    The $p$-linear error terms $E_{\mathrm{GK}}$ and $E_{\mathrm{Dwork}}$ are defined explicitly in \eqref{eq:E(GK)} and \eqref{eq:EDwork}, respectively.  We obtain these congruences in \Cref{lem:GK-congruence} and \Cref{lem:Dwork-congruence}, respectively, and in \Cref{prop:GK-error} and \Cref{prop:Dwork-error} compute them explicitly in the case $\gamma(\HD) \leq 1$ to obtain \eqref{eq:super-combined}.  
\end{remark}

Our method can be applied for different choices of $\HD^\flat$.  In a private communication, Frits Beukers and Henri Cohen determined appropriate triangle groups, Hauptmoduln, and special values of classical hypergeometric functions for many more cases of $\HD^\flat$, vastly generalizing our discussions in \Cref{sec:triangle}. Below we apply our method to  one case in which $\HD$ has length three in which $q_3\neq 1$. 

\begin{theorem}\label{thm:length3} Let $ \ba^\flat=\{\frac12,\frac12\},\bbeta^\flat=\{1,1\}$, and $(r_3,q_3)=(\frac14,\frac34)$ so that $\HD=\{\{\frac12,\frac12,\frac14\},\{1,1,\frac34\}\}$.
For any prime ideal $\wp$ above $p\equiv 1\pmod4$,  
\begin{equation}\label{eq:eg1-P}
    {\mathbb P}\left(\bigg\{\frac12,\frac12,\frac14\bigg\},\bigg\{1,1,\frac34\bigg\};1;\wp\right)=
    a_p(f_{32.3.c.a}),
\end{equation}
where $f_{32.3.c.a}$ is as given by \eqref{eq:K2(1/4,3/4)}. Moreover, for each prime $p \equiv 1 \pmod 4$ we have the corresponding supercongruence
\begin{equation}\label{eq:eg1-super} 
     \frac{\Gamma_{p}(\frac{1}{4})\Gamma_{p}(\frac{1}{2})}{\Gamma_{p}(\frac{3}{4})}\pFq32{\,\,\frac12&\frac12&\frac1{4}}{\,\,1&1&\frac3{4}}1_{p-1}\equiv a_p(f_{32.3.c.a}) \pmod{p^2}. 
\end{equation}
\end{theorem}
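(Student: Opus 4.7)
The plan is to apply \Cref{thm:main} to the datum $\HD = \{\{\tfrac12, \tfrac12, \tfrac14\}, \{1, 1, \tfrac34\}\}$ to obtain \eqref{eq:eg1-P}, and then combine the result with \Cref{thm:supercongruences} to upgrade it to the supercongruence \eqref{eq:eg1-super}. A direct computation gives $\gamma(\HD) = -1 + 3 \cdot \tfrac{1}{2} = \tfrac{1}{2} < 1$, so the $\delta_{\gamma(\HD)=1}$ correction vanishes in both theorems and no $p$-linear error term is present.

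For condition (1) of \Cref{thm:main}, I would use the signature-two Ramanujan theory corresponding to $\ba^\flat = \{\tfrac12, \tfrac12\}$ and $\bbeta^\flat = \{1, 1\}$: the natural Hauptmodul is a rescaling of the modular lambda function so that $t = C_1 q + O(q^2) \in \Z[[q]]$. Substituting into \eqref{eq:f} with $(r_3, q_3) = (\tfrac14, \tfrac34)$ and using the identification of ${}_2F_1[\tfrac12, \tfrac12; 1; \lambda] \cdot q\,d\lambda/dq$ with an explicit eta quotient built into the $\mathbb{K}_2$ construction, $f_{\HD}$ agrees (after the shift $\tau \mapsto 8\tau$ dictated by $N(\tfrac14) = 8$) with $\mathbb{K}_2(\tfrac14, \tfrac34)(8\tau)$. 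Equation \eqref{eq:K2(1/4,3/4)} then provides a normalized Hecke eigenform in the same Hecke orbit, namely $f_{\HD}^\sharp = f_{32.3.c.a}$, verifying condition (1).

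Because $\HD$ involves a primitive fourth root of unity it is not defined over $\Q$, and so the $\Z$-integrality required by condition (2) must be argued directly. Following the proof of \Cref{prop:6.6}, I would use the complex conjugation identity $\overline{\P(\HD; 1; \wp)} = \P(\overline{\HD}; 1; \wp)$ for $\overline{\HD} = \{\{\tfrac12, \tfrac12, \tfrac34\}, \{1, 1, \tfrac54\}\}$, together with Proposition 1 of \cite{LLT2} identifying these two character sums whenever $p \equiv 1 \pmod 4$, to force their common value into $\Z$. With both hypotheses in place, \eqref{eq:2.9} specializes (using $n=3$ and $H_p = \mathcal{J}^{-1}\P$ with $\mathcal{J} = J(\phi, \phi)\,J(\iota_\wp(\tfrac14), \phi)$) to
\[
    a_p(f_{32.3.c.a}) = -\bigl[\iota_\wp(\tfrac14)(C_1)\,J(\phi, \phi)\bigr]^{-1}\,\P(\HD; 1; \wp).
\]
For $p \equiv 1 \pmod 4$, $\phi(-1) = 1$ gives $J(\phi, \phi) = -1$, and $\iota_\wp(\tfrac14)(C_1) = 1$ since $C_1$ is, up to sign, a fourth power, so the bracket equals $-1$ and \eqref{eq:eg1-P} follows.

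Finally, for \eqref{eq:eg1-super}, \Cref{thm:supercongruences} applies since the data satisfy \eqref{eq:*condition} and $\gamma(\HD) = \tfrac12 \le 1$; this gives $H_p(\HD; 1; \bar\omega_p) \equiv F(\HD; 1)_{p-1} \pmod{p^2}$. Multiplying through by the Jacobi sum prefactor that produced \eqref{eq:eg1-P}, and evaluating $J_{\bar\omega_p}(\tfrac14, \tfrac12)$ modulo $p^2$ via the Gross--Koblitz formula as a sign times $\Gamma_p(\tfrac14)\Gamma_p(\tfrac12)/\Gamma_p(\tfrac34)$, converts the truncated series into the form on the left side of \eqref{eq:eg1-super}. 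The main obstacle I expect is verifying condition (2): absent a $\Q$-rational structure for $\HD$, one must import the external character sum symmetry from \cite{LLT2} and then carefully track signs, Jacobi sums, and $p$-adic units through the combined application of \eqref{eq:2.9} and Gross--Koblitz.
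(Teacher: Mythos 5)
Your proposal is correct and takes essentially the same route as the paper: the paper likewise verifies condition (1) of \Cref{thm:main} via the $\mathbb{K}_2\left(\frac14,\frac34\right)(8\tau)$ construction and condition (2) via complex conjugation together with the character-sum identity \eqref{eq:Ptransform} from Proposition 1 of \cite{LLT2}, then specializes \eqref{eq:2.9} and combines \Cref{thm:supercongruences} with \Cref{cor:Jac-Gp} exactly as you describe. Two cosmetic points: the conjugate datum is more properly written $\left\{\left\{\frac12,\frac12,\frac34\right\},\left\{1,1,\frac14\right\}\right\}$ rather than with $q_3=\frac54$ (the associated characters coincide, so nothing breaks), and $-J_{\bar\omega_p}\left(\frac14,\frac12\right)=\Gamma_p\left(\frac14\right)\Gamma_p\left(\frac12\right)/\Gamma_p\left(\frac34\right)$ is an exact identity from \eqref{eq:5.31}, not merely a congruence modulo $p^2$.
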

\subsection{\texorpdfstring{The $p$-adic gamma function and the Gross--Koblitz formula}{The p-adic gamma function and the Gross--Koblitz formula}}
To prove congruences in later sections, we use the $p$-adic perturbation technique introduced in \cite{Long,LR} and further developed in \cite{Allen,LTYZ}.
We will make extensive use of Morita's $p$-adic gamma function, which is defined on positive integers $n$ by
\begin{equation}\label{eq:p-gamma-defn}
    \Gamma_p(n) = (-1)^n \prod_{\substack{1 \leq i \leq n-1 \\ p \nmid i}} i
\end{equation}
and then extended continuously to $\Z_p$.  Like the classical gamma function, this satisfies a shifting property
\begin{equation}\label{eq:p-gamma-shift}
    \frac{\G_p(x+1)}{\G_p(x)} = \begin{cases} -x & \text{if } x \in \Z_p^\times \\ -1 & \text{if } x \in p \Z_p \end{cases}
\end{equation}
and a reflection property
\begin{equation}\label{eq:p-gamma-reflect}
    \G_p(x) \G_p(1-x) = (-1)^{x_0}
\end{equation}
where $x_0 \in \left\{1, 2, \hdots, p\right\}$ satisfies $x \equiv x_0 \pmod{p}$.

For any multiplicative character $A$ of a finite field $\F_q$ of characteristic $p$, we use 
\begin{equation}\label{eq:gauss-defn}
    \g(A)\colonequals\sum_{x\in \F_q^{\times}}A(x)\zeta_p^{\text{Tr}^{\F_q}_{\F_p}(x)}
\end{equation}
to denote the \emph{Gauss sum} of $A$, where $\text{Tr}^{\F_q}_{\F_p}$ is the standard trace map from $\fq$ to $\fp$.
We will also use the Gross--Koblitz formula which relates Gauss sums to $p$-adic gamma functions.
\begin{theorem}[Gross--Koblitz, \cite{Gross-Koblitz}] \label{thm:gk}
    Let $p$ be a prime and $0 \leq r \leq p-2$ be an integer. Then
    \[
        \g(\bar{\omega}_p^{r}) = - \pi_{p}^{r} \cdot \Gamma_{p} \bigg( \frac{r}{p-1} \bigg),
    \]
    where $\omega_p$ is the Teichm\"{u}ller character of $\fp^{\times}$, 
    and $\pi_{p}$ is a fixed root of $x^{p-1} + p = 0$ in $\C_{p}$.
\end{theorem}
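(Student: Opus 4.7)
The plan is to follow the classical Dwork--Gross--Koblitz argument: realize the standard nontrivial additive character $\psi(x)=\zeta_p^x$ of $\F_p$ as the composition of the Teichmüller lift with a convergent $p$-adic analytic function, Dwork's splitting function, so that the Gauss sum unfolds into an explicit $p$-adic power series whose coefficients are recognizably values of $\Gamma_p$.

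First I would introduce the splitting function
$$\theta(X) \colonequals \exp\!\bigl(\pi X - \pi X^p\bigr) = \sum_{n\ge 0} c_n X^n,$$
where $\pi$ is the fixed root of $x^{p-1}+p=0$ from the statement. Standard Artin--Hasse-type estimates give convergence on $|X|_p \le 1$ and show that $\theta(1)$ is a primitive $p$-th root of unity. With the standard choice of $\pi$ (equivalently, a compatible embedding $\mu_p \hookrightarrow \C_p$), the composition $a \mapsto \theta(\omega_p(a))$ equals $\psi(a)$, because both are nontrivial additive characters of $\F_p$ with the same image of $1$. Substituting into the defining formula \eqref{eq:gauss-defn} and expanding yields
$$\g(\bar\omega_p^r) = \sum_{a\in\F_p^\times}\omega_p(a)^{-r}\theta(\omega_p(a)) = \sum_{n\ge 0} c_n \sum_{a\in\F_p^\times}\omega_p(a)^{n-r},$$
and the orthogonality of powers of $\omega_p$ (which has exact order $p-1$) forces $n \equiv r \pmod{p-1}$, collapsing the Gauss sum to the convergent $p$-adic series $\g(\bar\omega_p^r) = (p-1)\sum_{k\ge 0} c_{r+k(p-1)}$.

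The remaining task is to identify this series with $-\pi^r\Gamma_p\!\bigl(r/(p-1)\bigr)$. Expanding $\theta(X)=\exp(\pi X)\exp(-\pi X^p)$ and using $\pi^{p-1}=-p$ yields the closed form $c_n = \pi^n\sum_{j:\,pj\le n}\frac{1}{(n-pj)!\,j!\,p^j}$. I would then match both sides against a common Frobenius recursion: the functional equation $\theta(X)/\theta(X^p) = \exp\!\bigl(-\pi(X^p - X^{p^2})\bigr)$ induces a shift $r\mapsto pr\pmod{p-1}$ on the Gauss-sum side, while Morita's $\Gamma_p$ satisfies the corresponding Gauss multiplication identity by iterating \eqref{eq:p-gamma-shift} combined with its integer values \eqref{eq:p-gamma-defn}. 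Base cases $r=0$ (where $\g(\eps)=-1=-\pi^0$ and $\Gamma_p(0)=1$, the latter from \eqref{eq:p-gamma-shift} at $x=0$) and $r=1$ (direct from the leading Artin--Hasse term) close the induction.

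The main obstacle is the last identification: the series $(p-1)\sum_k c_{r+k(p-1)}$ converges $p$-adically but not quickly, and extracting its closed-form value requires either the Frobenius cancellation just sketched or a Stickelberger-type $p$-adic valuation analysis of the $c_n$ via Legendre's formula $v_p(n!)=(n-s_p(n))/(p-1)$. The underlying conceptual point is the uniqueness of Dwork's $\theta$ as the character-lifting exponential, and this uniqueness forces the identification of the resulting coefficients with $\Gamma_p$ rather than with some abstract $p$-adic constant of comparable valuation.
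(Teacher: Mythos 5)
The paper does not prove this statement: it is quoted directly from Gross and Koblitz \cite{Gross-Koblitz} and used as a black box (e.g.\ in \Cref{cor:Jac-Gp} and \Cref{lem:GK-congruence}), so there is no internal argument to compare yours against. Judged on its own terms, your proposal correctly reproduces the first half of the standard Dwork-style proof: the splitting function $\theta(X)=\exp(\pi_p X-\pi_p X^p)$, its convergence on the closed unit disk, the realization $\psi(a)=\theta(\omega_p(a))$ of the additive character, and the resulting unfolding
\[
\g(\bar\omega_p^{\,r})=(p-1)\sum_{k\ge 0}c_{r+k(p-1)},
\qquad c_n=\pi_p^{\,n}\sum_{pj\le n}\frac{1}{(n-pj)!\,j!\,p^j},
\]
are all correct and are exactly how the known analytic proofs begin.

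The gap is in the second half, which is where the entire content of the theorem lives. Your proposed closing device --- a ``Frobenius recursion'' induced by $\theta(X)/\theta(X^p)$ --- cannot work over the prime field: since $\omega_p$ has exact order $p-1$ and $pr\equiv r\pmod{p-1}$, the shift $r\mapsto pr$ is the identity on exponents, so the recursion is satisfied trivially by both sides and determines nothing. (The identity $\theta(X)/\theta(X^p)=\exp(-\pi_p(X^p-X^{p^2}))$ is also false as written; the correct telescoping statement is $\theta(X)\theta(X^p)\cdots\theta(X^{p^{k-1}})=\exp(\pi_p X-\pi_p X^{p^k})$, a product rather than a quotient. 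The nontrivial Frobenius structure only appears for Gauss sums over $\F_{p^f}$ with $f>1$, which is not the case at hand.) Consequently the base cases $r=0,1$ plus this recursion do not reach general $r$. The alternative you mention --- a Stickelberger-type valuation analysis of the $c_n$ --- yields only the leading-order congruence $\g(\bar\omega_p^{\,r})\equiv-\pi_p^{\,r}/r!\pmod{\pi_p^{\,r+1}}$, which (using $\Gamma_p(r/(p-1))\equiv 1/r!\pmod{p}$) is the Gross--Koblitz formula modulo one extra power of $\pi_p$, not the exact equality. Upgrading that congruence to equality is precisely the hard step, and it needs an input your sketch does not supply: for instance, showing via the reflection and Hasse--Davenport multiplication relations (mirrored by the corresponding identities for $\Gamma_p$) that the ratio of the two sides is a $(p-1)$st root of unity and then concluding from the congruence, or the cohomological argument of the original paper. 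As it stands, the proposal proves the formula only to first order.
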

This is instrumental in the proof of the first supercongruence in \Cref{rem:super-pieces}, as it allows us to translate from the Gauss sums defining $H_p$ to the gamma functions defining ${}_nF_{n-1}$.  Additionally we recall that, for multiplicative characters $A$ and $B$ of $\F_q^\times$, their Jacobi sum satisfies

\begin{equation} \label{eq:g2j}
J(A,B) = \frac{\g(A)\g(B)}{\g(AB)}, \quad  A\neq \ol B.   
\end{equation}
Combining this with \Cref{thm:gk} gives the following useful corollary.
\begin{corollary}\label{cor:Jac-Gp}
    For $r,s\in \Q\cap (0,1)$, $p\equiv 1\pmod{\text{lcd}(r,s)}$ such that $r+s< 1$.  Then 
    \begin{equation}\label{eq:5.31}
        -J_{\bar \omega_p}(r,s)=
        \Gamma_{p}\bigg(\frac{r,s}{r+s} \bigg).
    \end{equation}
\end{corollary}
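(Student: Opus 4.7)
The plan is to combine the Jacobi--Gauss relation \eqref{eq:g2j} with the Gross--Koblitz formula (\Cref{thm:gk}). Since $p \equiv 1 \pmod{\mathrm{lcd}(r,s)}$, the integers $r(p-1)$ and $s(p-1)$ lie in $\{1, 2, \ldots, p-2\}$, and the hypothesis $r+s<1$ ensures $(r+s)(p-1)\in \{1,\ldots,p-2\}$ as well. In particular, the two characters $\iota_\wp(r)=\bar\omega_p^{r(p-1)}$ and $\iota_\wp(s)=\bar\omega_p^{s(p-1)}$ satisfy $\iota_\wp(r)\neq \overline{\iota_\wp(s)}$, so the product formula $J(A,B)=\gamma(A)\gamma(B)/\gamma(AB)$ from \eqref{eq:g2j} applies.

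First I would write
\[
    J_{\bar\omega_p}(r,s)=\frac{\gamma(\bar\omega_p^{r(p-1)})\,\gamma(\bar\omega_p^{s(p-1)})}{\gamma(\bar\omega_p^{(r+s)(p-1)})}.
\]
Next I would substitute the Gross--Koblitz expression $\gamma(\bar\omega_p^{k})=-\pi_p^{k}\,\Gamma_p(k/(p-1))$ for each of the three Gauss sums, taking $k=r(p-1)$, $s(p-1)$, and $(r+s)(p-1)$ respectively. The powers of $\pi_p$ in the numerator and denominator cancel exactly because the exponents add, leaving
\[
    J_{\bar\omega_p}(r,s)=\frac{(-1)(-1)}{(-1)}\cdot\frac{\Gamma_p(r)\Gamma_p(s)}{\Gamma_p(r+s)}=-\Gamma_p\!\left(\frac{r,s}{r+s}\right),
\]
which rearranges to the claimed identity.

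There is no real obstacle here beyond bookkeeping: the only things to verify carefully are (i) that the integer exponents fall in the range $[0,p-2]$ required by \Cref{thm:gk}, which is exactly where the congruence condition $p\equiv 1 \pmod{\mathrm{lcd}(r,s)}$ and the inequality $r+s<1$ enter, and (ii) that the three factors of $-1$ coming from Gross--Koblitz combine to a single $-1$, accounting for the sign in the statement. Once these are in place, the identity is immediate.
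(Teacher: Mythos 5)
Your proof is correct and is exactly the argument the paper intends: the corollary is stated as an immediate consequence of combining \eqref{eq:g2j} with the Gross--Koblitz formula (\Cref{thm:gk}), and your sign and $\pi_p$-exponent bookkeeping, together with the verification that the exponents $r(p-1)$, $s(p-1)$, $(r+s)(p-1)$ lie in $[1,p-2]$ and that $\iota_\wp(r)\neq\overline{\iota_\wp(s)}$, is precisely what is needed. Nothing is missing.
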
    

The shorthand notation for Jacobi sums from \Cref{ss:mr} is used above. 

\section{Modular forms and commutative formal group laws}\label{sec:background}
 In this section, we recall some classic results in hypergeometric functions and modular forms that will be used to explain when the function $f_{\HD}$ of \Cref{thm:main} is a modular form.  We then provide a $p$-adic relation between these topics by commutative formal group laws.  For definitions and basic properties of modular forms, $\eta$-products, and Hecke operators, see \cite{Cohen-Stromberg, DS-modularforms}.
\subsection{Ramanujan's theory of elliptic functions to alternative bases (REAB)} \label{subsec: Ramanujan}
For $d \in \{2,3,4,6\}$, let
\[
    \HD_d\colonequals\left\{\left\{1/d,1-1/d\right\},\{1,1\}\right\},
\]
which is a length two hypergeometric datum defined over $\Q$ that corresponds to a second-order hypergeometric differential equation.  The corresponding monodromy group $\G_d$ is isomorphic to the genus zero congruence subgroup $\G(2)$ (which is isomorphic to $\G_0(4)$), $\G_0(3)$, and $\G_0(2)$, when $d=2$, $3$, and $4$, respectively. 
The  Schwarz map (see \cite[Theorem 3.2]{Win3X})
\begin{equation}\label{eqn: alt-basis}
    f(z)\colonequals\tau=\frac{i}{\kappa_d}\cdot \frac{F(\HD_d;1-z)}{F(\HD_d;z)},
\end{equation}
where $\kappa_d=1,\sqrt3,\sqrt 2$ when $d=2,3,4$ respectively, sends the complex upper half plane to a hyperbolic triangle with inner angles $0,0,$ and $(1-\frac2d)\pi$.  
Both $F(\HD_d;1-
z)$ and $F(\HD_d;z)$ satisfy
the same hypergeometric differential equation. 
\begin{center}
    \includegraphics[height=3.5cm]{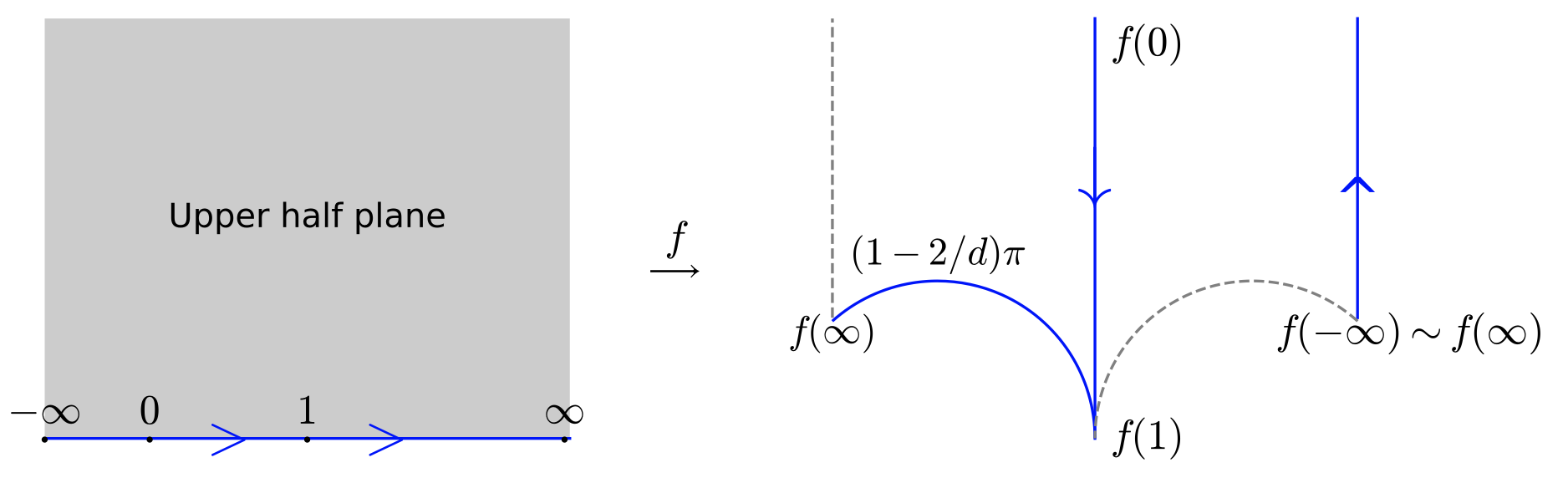}   
\end{center} 
Let $t_d$ be the Hauptmodul of $\G_d$ which takes values 0 and 1 at the two cusps corresponding to $f(0)$ and $f(1)$ and has a simple pole at the elliptic point when $d=3$ and $4$ or at the other cusp when $d=2$.  The map $t_d\mapsto 1-t_d$, arising from a normalizer of $\G_d$ in $\GL_2(\Q)$,  is the involution swapping the two cusps. This fact can be used to relate $L$-values of weight three cusp forms at 1 and 2, details are given in part II \cite{HMM2} of this series.    The Hauptmodul $t_d$ can also be taken as the inverse of the Schwarz map \eqref{eqn: alt-basis}.  In many cases, $t_{d}$ can be expressed as a quotient of Dedekind eta functions.  As an example,
\begin{equation}\label{eq:lambda-in-eta}
    t_2\colonequals\l(\tau)=16 \frac{\eta(\frac{\tau}{2})^{8}\eta(2 \tau)^{16}}{\eta(\tau)^{24}},  
\end{equation} 
the expressions of other $t_d$ are available in Table \ref{tab:Hauptmodul} in  \Cref{sec:triangle}.  Another important fact is that
$\pFq21{\,\,\frac1d&\frac{d-1}d}{\,\,1&1}{t_d(\tau)}$ is a weight one modular form for the group $\G_d$. In the classical theory, the $d=2$ case is well understood.  When $\l(\tau)$ is the modular lambda function, as in \eqref{eq:lambda-in-eta}, one has the following whenever both sides make sense:
\begin{equation}\label{eqn: alt-2}
    \pFq21{\,\,\frac12&\frac12}{\,\,1&1}{\l(\tau)}=\sum_{n,m\in \Z}q^{(n^2+m^2)/2}=\theta_3(\tau)^2.
\end{equation}
Here $\theta_3(\tau)$ is one of the weight-1/2 Jacobi theta functions, see \cite{LLT}. There are similar expressions for $\HD_3$ and $\HD_4$. For $d=6$,  by a hypergeometric quadratic formula,  
\[
   {}_2F_1(\HD_6; t_6)= \pFq21{\,\,\frac1{12}&\frac5{12}}{\,\,1&1}{4t_6(1-t_6)}= \pFq21{\,\,\frac1{12}&\frac5{12}}{\,\,1&1}{\frac{1728}{j}} =E_4^{1/4},
\]
where $E_{4}$ is the weight four normalized Eisenstein series on $\SL_{2}(\Z)$ and $t_6$ is a function satisfying $4t_6(1-t_6)=\frac{1728}{j}$. For other cases, please see the Appendix \Cref{sec:triangle}. 
\begin{example}\label{eg:LC-overC} For $\HD_2$, we consider the Legendre family of elliptic curves
    \begin{equation}\label{eq:LC}
        E_z: \quad y^2=x(1-x)(1-z x).
    \end{equation}
    Note that $ \omega_z\colonequals\frac{dx}{\sqrt{x(1-x)(1-z x)}}$ is the unique up to scalar holomorphic differential 1-form on $E_z$.  Using \eqref{eq:inductive},
    \begin{equation}  
        \int_0^1 \omega_z 
        =\pi \cdot F(\HD_2;z).
    \end{equation}

\end{example}

\subsection{\texorpdfstring{$\BK_2(r,s)$}{K2}-functions}\label{ss:K2}
We now investigate when the differential of the Euler integral formula evaluated at the appropriate Hauptmodul is the modular form $f(q)$ in Theorem \ref{thm:main}. We consider the case when $\HD^\flat=\HD_2$.
\begin{defn}\label{defn:K2}
    Given $r,s\in \Q$, define
    \begin{equation}\label{eq:k2}
        \BK_2(r,s)(\tau)\colonequals \frac{\eta \left(\frac{\tau}{2} \right)^{16s-8r-12}\eta(2 \tau)^{8s+8r-12}}{\eta(\tau)^{24s-30}}.
    \end{equation} 
 \end{defn}
Using \eqref{eqn: alt-2} and $\lambda$ as in \eqref{eq:lambda-in-eta},  we have \begin{equation}\label{eq:alt-2-eta}
        {\l}(\tau)^{r}{(1-\l(\tau))}^{s-r-1}   \pFq21{\,\,\frac 12&\frac 12}{\,\,1&1}{\l(\tau)}  q\frac{d \l(\tau)}{\l(\tau)dq} = 2^{4r -1 } \BK_2(r,s)(\tau),
    \end{equation}
    as stated in \Cref{ss:intro}.  In the following, we will focus on those pairs $(r,s)$ which give rise to congruence modular forms. 

\begin{lemma}\label{lem:E-eta}
 For each $(r,s)\in  {\mathbb S}_2$ in \eqref{eq:S}, $\BK_2(r,s) (N(r)\tau)$ is a congruence weight three holomorphic cusp form of  level $ N(r)N(s-r)$ with Dirichlet character induced by $\left(\frac{-2^{24s}}\cdot\right)$, where  
 \begin{equation}
        N(r)\colonequals\frac {48}{\gcd(24r,24)}.
    \end{equation} 
\end{lemma}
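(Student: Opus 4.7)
The plan is to realize $g(\tau) \colonequals \mathbb{K}_2(r,s)(N(r)\tau)$ as an eta quotient on $\Gamma_0(L)$ with $L \colonequals N(r) N(s-r)$ and invoke Ligozat's theorem. Substituting $\tau \mapsto N(r)\tau$ in \eqref{eq:k2} turns the three arguments into $\delta\tau$ for $\delta \in \{N(r)/2,\,N(r),\,2N(r)\}$, all of which divide $L$, since $N(x) = 48/\gcd(24x,24)$ is always even, so $2 \mid N(s-r)$. First I would check that the three exponents $16s-8r-12$, $8s+8r-12$, and $24s-30$ are integers; this is exactly what the conditions $24s \in \Z$ and $8(r+s) \in \Z$ built into $\mathbb{S}_2$ ensure, noting also that $16s-8r = 24s - 8(r+s)$ is thereby integral. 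The weight evaluates to $\tfrac12[(16s-8r-12)+(8s+8r-12)-(24s-30)] = 3$.

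Next I would verify Ligozat's two congruences modulo $24$. Direct expansion collapses the two weighted sums to
\[
\sum_{\delta}\delta\, r_\delta \;=\; 12\,N(r)\,r, \qquad \sum_{\delta}(L/\delta)\, r_\delta \;=\; 12\,N(s-r)\,(s-r),
\]
and the identity $N(x)\,\gcd(24x,24) = 48$ makes each right-hand side a multiple of $24$. For the Nebentypus, the formula $(-1)^{k}\prod_\delta \delta^{r_\delta} = -2^{16r-8s}\,N(r)^{6}$ applies; since $N(r)^{6}$ is a perfect square and $24s - (16r-8s) = 16(2s-r) = 2(16s-8r)$ is an even integer, the associated Jacobi symbol collapses to $\bigl(\frac{-2^{24s}}{\cdot}\bigr)$ on $(\Z/L\Z)^\times$, as claimed.

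The main obstacle is establishing strict positivity of the order of vanishing at every cusp of $\Gamma_0(L)$. At the cusp $c/d$ with $d \mid L$, Ligozat's formula gives the order in terms of $\sum_{\delta \mid L}\gcd(d,\delta)^{2}\, r_\delta/\delta$, which in our setting reduces to a short piecewise-linear expression in $\gcd(d,2N(r))$ since only three $r_\delta$ are nonzero. The individual exponents need not be positive (for instance at $(r,s) = (\tfrac{1}{4},\tfrac{3}{8}) \in \mathbb{S}_2$ both $16s-8r-12 = -8$ and $8s+8r-12 = -7$ are negative), so strict positivity of the cusp order is genuinely a constraint and must be extracted from the defining inequalities $0 < r < s < 3/2$, $r \neq 1$, $s \neq \tfrac{1}{2}$. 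I would handle this by splitting on the $2$-adic valuation of $d$ relative to that of $N(r)$ and on $\gcd(d, N(r)/2)$; in each case the strict inequality $r < s$ forces the otherwise borderline sum to be strictly positive, and the excluded values $r = 1$ and $s = \tfrac{1}{2}$ are precisely those at which the leading term at some cusp would vanish.

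With weight, level, character, and cuspidal vanishing in hand, Ligozat's theorem (cf.\ \cite{Cohen-Stromberg}) delivers $g \in S_3(\Gamma_0(L), \chi)$ with $\chi = \bigl(\frac{-2^{24s}}{\cdot}\bigr)$, which is the congruence weight three cusp form asserted.
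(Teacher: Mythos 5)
Your route is genuinely different from the paper's and, modulo the caveats below, it works. The paper never writes $\BK_2(r,s)$ as an eta quotient for the purpose of this lemma: it uses \eqref{eqn: alt-2} and \eqref{eq:theta4} to write $2^{4r-1}\BK_2(r,s)(\tau)=\left(\tfrac{\lambda}{1-\lambda}\right)^r(1-\lambda)^s\theta_3^6(\tau)$, reads off holomorphy on $\mathfrak{H}$, and computes the orders at the three cusps of the torsion-free group $\G(2)$ via the branched cover $X(\G)\to X(\G(2))$, obtaining orders $r$, $\tfrac14(s-r)$, $\tfrac32-s$ at $i\infty$, $0$, $1$; cuspidality is then immediate from $0<r<s<\tfrac32$, and the integrality conditions $24s,\ 8(r+s),\ 16s-8r\in\Z$ are extracted as the congruence conditions. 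Your Ligozat computation buys something the paper's proof leaves implicit: the paper only concludes that $\BK_2(r,s)(48\tau)$ has level $48^2$, whereas your verification of the two congruences $\sum_\delta \delta r_\delta=12N(r)r\equiv 0$ and $\sum_\delta (L/\delta)r_\delta=12N(s-r)(s-r)\equiv 0 \pmod{24}$ pins down the level $L=N(r)N(s-r)$ actually asserted in the statement, and your character computation ($-N(r)^6 2^{16r-8s}$ differing from $-2^{24s}$ by the square $2^{2(16s-8r)}$) matches the claimed nebentypus directly. The paper's approach buys a cleaner conceptual picture (three cusps, three linear forms in $r,s$) at the cost of precision about the level.

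Two caveats. First, your cusp-positivity step is only sketched; it does go through, and when you carry it out the three cases ($v_2(d)<v_2(N(r))$, $=$, $>$) yield, up to positive factors, exactly $12(s-r)$, $24(3-2s)$, and $24r$ --- i.e.\ the paper's three cusp orders --- so each case uses a \emph{different} one of the inequalities $r<s$, $s<\tfrac32$, $0<r$; your statement that ``in each case the strict inequality $r<s$ forces positivity'' is not accurate. Second, your parenthetical claim that $r=1$ and $s=\tfrac12$ are ``precisely those at which the leading term at some cusp would vanish'' is wrong: the cusp orders vanish only at $r=0$, $s=r$, $s=\tfrac32$, all excluded by the strict inequalities. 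The exclusions $r\neq 1$ and $s\neq\tfrac12$ come from primitivity of the hypergeometric datum $\{\{\tfrac12,\tfrac12,r\},\{1,1,s\}\}$ (no $r_i-q_j\in\Z$), not from cuspidality. Neither issue invalidates the argument, but both should be fixed before this could replace the paper's proof.
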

\begin{proof}We first recall from \cite{LLT} that 
\[
  q\frac{d \l(\tau)}{\l(\tau)dq} = \frac 12\theta_4^4(\tau),
\]
where 
\begin{equation}\label{eq:theta4}
    \theta_4(\tau) =\sum_{n\in \Z}(-1)^{n}q^{\frac{n^2}2},  \quad \text{and} \quad \theta_4^4(\tau)=(1-\l(\tau))\theta_3^4(\tau).
\end{equation}
Thus 
\[
    2^{4r-1}\BK_2(r,s)(\tau)=  \left(\frac{\l}{1-\l}\right)^{r}(1-\l)^{s}\theta_3^6(\tau)=q^{r/2}+\cdots, 
\]
which is holomorphic on the upper half-plane. The poles and zeros occur at points on $\Q\cup\{\infty\}$.  Since $\G(2)$ is torsion-free,   $\BK_2(r,s)(\tau)$ is a weakly holomorphic modular form on a subgroup $\G$ of $\G(2)$ determined by the rational numbers $r$ and $s$, and the order of poles or zeroes at rational numbers are determined by the branch cover between the modular curves $X(\G) \rightarrow X(\G(2))$. The orders at $0$, $1$, and $i\infty$ are
\renewcommand{\arraystretch}{1.2}
\[
    \begin{array}{||c||c|c|c||}\hline 

        \mbox{pt }&i\infty& 0& 1\\ \hline 
        \mbox{Ord}_{\mbox{pt }}(\BK_2(r,s)(\tau))& r& \frac 14(s-r)& 3/2-s\\ \hline
    \end{array}
\]
\renewcommand{\arraystretch}{1}
The necessary conditions for being congruence are $24s\in \Z$, $8(r+s)\in \Z$, and $16s-8r\in\Z$. For these $r$ and $s$, $\BK_2(r,s)(48\tau)$ is a cusp form, not necessarily new, of level $48^2$ with Dirichlet character $\left(\frac{-2^{24s}}\cdot\right)$. 
\end{proof}
We now consider conjugate pairs of $(r,s)\in\mathbb S_2$.
\begin{lemma}\label{lem:S2-conjugate}
     For a given $(r,s)\in {\mathbb S}_2$, let $M=\text{lcd}{(\frac12,r,s)}$.  For any $c\in(\Z/M\Z)^\times$, let  
$r_c=\{cr\}$ and $s_c=\{cs\}$  if $\{cr\}\le {\{cs\}}$; and $s_c=\{cs\}+1$ otherwise. Then $(r_c,s_c)\in \mathbb S_2.$
\end{lemma}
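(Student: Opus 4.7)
The plan is to verify each of the defining conditions of $\mathbb{S}_2$ for $(r_c,s_c)$ in turn. From the hypotheses $24s\in\Z$ and $8(r+s)\in\Z$, subtraction gives $24r=3\cdot 8(r+s)-24s\in\Z$, so the denominators of $r$ and $s$ in lowest terms divide $24$, and hence $M\mid 24$. Throughout I use that $\gcd(c,M)=1$.

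The denominator conditions are immediate: in both case 1 and case 2, $s_c\equiv cs\pmod{\Z}$ and $r_c+s_c\equiv c(r+s)\pmod{\Z}$ (the additional $+1$ in case 2 is harmless), so $24s_c\in\Z$ and $8(r_c+s_c)\in\Z$ follow from the corresponding conditions on $(r,s)$. For the range on $r_c$: since $r\in(0,3/2)$ and $r\neq 1$, $r$ is not an integer, so its denominator is at least $2$ and divides $M$. Coprimality of $c$ and $M$ then forces $cr\notin\Z$, giving $r_c=\{cr\}\in(0,1)$; in particular $r_c\neq 1$. A similar denominator analysis shows that $\{cs\}=1/2$ is possible only if the denominator of $s$ equals $2$, forcing $s\in\{1/2,3/2\}$; both are incompatible with $(r,s)\in\mathbb{S}_2$, so $s_c\neq 1/2$.

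The subtler conditions are the strict inequality $r_c<s_c$ and the upper bound $s_c<3/2$. For $r_c<s_c$: in case 2 we have $r_c<1\le s_c$ automatically. In case 1 the non-strict inequality holds by definition, and equality would force $c(r-s)\in\Z$; coprimality then gives $s-r\in\Z\cap(-3/2,0)=\{-1\}$ and hence the degenerate configuration $s=r+1$, which does not occur for the pairs of interest. For $s_c<3/2$: case 1 is trivial since $s_c=\{cs\}<1$; case 2 requires $\{cs\}<1/2$.

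This last step is the main obstacle. The natural strategy is to use the identity
\[
\{cr\}+\{cs\}=\{c(r+s)\}+\varepsilon,\qquad \varepsilon\in\{0,1\},
\]
combine it with $8(r+s)\in\Z$, so that $\{c(r+s)\}\in\{0,\tfrac{1}{8},\tfrac{2}{8},\ldots,\tfrac{7}{8}\}$, and then perform case analysis on $\{c(r+s)\}$ together with the case 2 hypothesis $\{cr\}>\{cs\}$ to rule out $\{cs\}\ge 1/2$. The delicacy arises near the boundary $\{cs\}=1/2$, where one must exploit the remaining constraint $s<3/2$; in particular, one expects to pair up each case 2 instance with its complementary $c'\equiv -c\pmod M$ (under which fractional parts go to $1-\{\cdot\}$) and argue that the original bound $s<3/2$ transfers through this involution. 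Packaging these case checks uniformly across the 167 admissible pairs, with attention to the boundary $\{cs\}=1/2$, is where I expect the bulk of the work.
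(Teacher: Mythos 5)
The routine checks in your proposal are correct: $24s_c\in\Z$ and $8(r_c+s_c)\in\Z$ follow from $s_c\equiv cs$ and $r_c+s_c\equiv c(r+s)\pmod{\Z}$; $r$ is a non-integer whose denominator divides $M$, so $r_c=\{cr\}\in(0,1)$ and $r_c\neq1$; and $\{cs\}=\tfrac12$ would force $s$ to have denominator $2$, which is excluded. This is already more than the paper offers, since its proof consists of the single sentence that ``$0<r_c<s_c\le\frac32$'' and that the ``other conditions can be verified directly.'' The difficulty is that the two conditions you yourself isolate as delicate are precisely the ones the proposal does not prove, and neither can be proved from the stated hypotheses. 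For strictness of $r_c<s_c$: you reduce equality in case 1 to $s=r+1$ and assert this ``does not occur for the pairs of interest,'' but $(r,s)=(\tfrac18,\tfrac98)$ satisfies every condition in \eqref{eq:S}, and for it $\{cr\}=\{cs\}$ for every $c$, so $r_c=s_c$ already at $c=1$. For the bound $s_c<\tfrac32$ in case 2, which you explicitly defer as ``the bulk of the work'': the claim is false. Take $(r,s)=(\tfrac1{24},\tfrac1{12})\in\mathbb S_2$ and $c=23\equiv-1\pmod{24}$; then $\{cr\}=\tfrac{23}{24}>\tfrac{11}{12}=\{cs\}$, so $s_c=\tfrac{11}{12}+1=\tfrac{23}{12}>\tfrac32$, and one checks that no representative of $c(r,s)\bmod\Z^2$ lies in $\mathbb S_2$ at all. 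More generally, every $(r,s)\in\mathbb S_2$ with $s<\tfrac12$ fails at $c\equiv-1\pmod M$, where one always lands in case 2 with $s_c=2-s>\tfrac32$. Your proposed case analysis on $\{c(r+s)\}\in\{0,\tfrac18,\dots,\tfrac78\}$ therefore cannot close the gap: the subcase $\{c(r+s)\}=\tfrac78$ with carry $\varepsilon=1$ is exactly where these counterexamples live, and the $c\leftrightarrow-c$ pairing you suggest converts the bound $s<\tfrac32$ into the requirement $s>\tfrac12$, which is not among the hypotheses.

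In short, the portion of the argument you wrote out is sound and matches what the paper leaves implicit, but the proposal is not a proof: it stops at the two nontrivial steps, and the second of them is a step that genuinely fails rather than one that merely needs more bookkeeping. The fault lies largely with the statement itself --- the definition \eqref{eq:S} of $\mathbb S_2$ evidently needs supplementary conditions (at least $s-r\notin\Z$, and something guaranteeing that all conjugates remain in the cuspidal range $s_c<\tfrac32$, which is what the explicit hypothesis ``$(r_c,s_c)\in\mathbb S_2$'' in \Cref{thm:mainK2version} is silently doing) --- but your instinct that $s_c<\tfrac32$ is ``the main obstacle'' was exactly right, and an honest completion of your outline would have surfaced the problem rather than resolved it.
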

\begin{proof}
    From the choices of $(r_c,s_c)$ we can check that $0<r_c<s_c\le \frac 32$. Other conditions can be verified directly.
\end{proof} 
For example, the conjugates of $(1/8,1)$ in $\mathbb S_2$ are $(j/8,1)$ where $j=1,3,5,7$. We use these pairs to  illustrate our method. By equation (\ref{eq:alt-2-eta}), 
\begin{equation}\label{eq:K2(1/8,1)}
       \bigg(\left(\frac{t}{1-t}\right)^{\frac{1}{8}} (1-t) \cdot \pFq{2}{1}{\,\,\frac{1}{2}&\frac{1}{2}}{\,\,1&1}{t} \cdot \frac{d t}{t} \bigg)_{t=\l} = \sqrt{2}  \cdot \frac{\eta(\tau)^{6}\eta \left(\frac{1}{2}\tau \right)^{3}}{\eta(2 \tau)^{3}} \frac{dq^{1/2}}{q^{1/2}}. 
\end{equation}The constant on the right hand side is $C_1^{1/8}=2^{1/2}$, where $C_1=16$ is the leading coefficient of $\l$. In this case, $N(\frac18)=
\frac {48}{3}=16$. Letting $\tau \mapsto 16 \tau$ in the above eta quotient gives
\begin{equation*}
\begin{split}
\BK_2\left(\frac{1}{8}, 1\right)(16\tau) \colonequals \frac{\eta(16 \tau)^{6}\eta(8 \tau)^{3}}{\eta(32 \tau)^{3}} &= q - 3q^{9} -6q^{17} +23q^{25} + 12q^{33} -66q^{41} +\cdots\\
 &= \sum_{{n\equiv 1} {\Mod{8}}} b_nq^n\in S_{3}\left(\Gamma_{0}(256), \left(\frac{-1}{\cdot}\right)\right).
\end{split}
\end{equation*}
Note that for all $j=1,3,5$, and $7$, the forms $\BK_2\left(\frac{j}{8}, 1\right)$ live on the same subgroup of $\G_0(4)$ as newforms.  Hence, we consider the subspace $V$ of $S_{3}\left(\Gamma_{0}(256), \left(\frac{-1}{\cdot}\right)\right)$ spanned by $f_j(\tau):=\BK_2\left(\frac{j}{8}, 1\right)(16\tau)\in q^j(1+\Z[[q^8]])$,  $j=1,3,5, 7$.  One can check that $T_2(f_j)=0$ for all $j$, and the actions of the operators $T_3$, $T_5$, and $T_7$ are as follows:
$$
  \begin{array} {|c||c|c|c|c|}
    \hline
    &f_1&f_3&f_5&f_7      \\\hline 
    T_3& -12f_3&f_1&-4f_7&3f_5  \\\hline
    T_5& 48f_5 &16f_7&f_1&3f_3  \\\hline
    T_7& -64f_7&16f_5&-4f_3&f_1  \\\hline
  \end{array}
$$
From the hypergeometric arithmetic perspective (for detailed discussion, see \cite{ENRosenK2}),  the space $V$ is invariant under the Hecke operators restricted on $V$ and hence give the Hecke eigenforms for this space. These computations lead to the following conclusion.
\begin{corollary}\label{cor:Galois} The space $V$ is an invariant subspace of $S_{3}(\Gamma_{0}(256), \chi_{-1})$ for all Hecke operators. 
The Hecke algebra for the subspace $V$ is generated by $T_3$ and $T_5$, with 
    $$
      T_3^2=-12, \quad T_5^2=48, \quad T_3T_5=3T_7. 
    $$
    Moreover, 
\begin{enumerate}
    \item 
    The minimal polynomials of the Hecke operators have degree at most two. 
    \item  The corresponding newform orbit is 256.3.c.g of LMFDB. 
    Namely,    
    \begin{align*}
         f_{256.3.c.g}(\tau)  &=  \left(\BK_2\left (\frac18,1\right) + a_3  \BK_2\left (\frac38,1\right)+a_5 \BK_2\left (\frac58,1\right)+\frac{a_3a_5}3  \BK_2\left (\frac78,1\right)\right)(16\tau)\\
         &= \eta(16 \tau)^{6} \left(\frac{\eta(8 \tau)^{3}}{\eta(32 \tau)^{3}} + a_3 \frac{\eta(8 \tau)}{\eta(32 \tau)} +a_5 \frac{\eta(32 \tau)}{\eta(8 \tau)} +\frac13a_3a_5 \frac{\eta(32 \tau)^{3}}{\eta(8 \tau)^{3}}\right)
    \end{align*}

where $a_3^2 =-12$, and  $a_5^2=48$.
   
    \item  The $q$-coefficients of $\BK_2\left(\frac{1}{8}, 1\right) (16\tau)$ are multiplicative. In particular, for $p\equiv 1\pmod 8$, they satisfy three-term Hecke recursions. 
\end{enumerate}
\end{corollary}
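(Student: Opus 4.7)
My plan is to verify the action table for $T_3, T_5, T_7$ displayed just before the corollary by direct Fourier-expansion comparison, and then leverage the simultaneous-eigenform structure of $T_3$ and $T_5$ on $V$ to obtain all remaining claims. The key structural observation is that each $f_j = \BK_2(j/8, 1)(16\tau)$ lies in $q^j(1+\Z[[q^8]])$, as is immediate from the $\eta$-quotient form and the $\tau\mapsto 16\tau$ scaling; in particular the four $f_j$ have pairwise disjoint $q$-expansion support modulo $8$ and so are linearly independent with $\dim V = 4$. For any odd prime $p$, the Hecke operator $T_p$ preserves this residue structure, sending a form supported on $n \equiv j \pmod 8$ to one supported on $n \equiv jp \pmod 8$ (both constituent terms $a_{np}$ and $a_{n/p}$ contribute to the same residue class because $p^2 \equiv 1 \pmod 8$). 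Comparing finitely many Fourier coefficients up to the Sturm bound in $S_3(\Gamma_0(256), \chi_{-1})$ then confirms the displayed table for $T_3, T_5, T_7$, and the operator $U_2$ annihilates $V$ since each $f_j$ has only odd-power support. The relations $T_3^2 = -12$, $T_5^2 = 48$, $T_3 T_5 = 3 T_7$ then follow from direct matrix multiplication on the table.

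To find the eigenforms, simultaneously diagonalize $T_3$ and $T_5$ on $V$. Writing $v = c_1 f_1 + c_3 f_3 + c_5 f_5 + c_7 f_7$, the condition $T_3 v = a_3 v$ forces $c_3 = a_3 c_1$, $c_7 = -\tfrac{a_3}{3} c_5$, and $a_3^2 = -12$; the condition $T_5 v = a_5 v$ forces $c_5 = a_5 c_1$, $c_7 = \tfrac{a_5}{3} c_3$, and $a_5^2 = 48$. Normalizing $c_1 = 1$ yields the four simultaneous eigenforms $f_1 + a_3 f_3 + a_5 f_5 + \tfrac{a_3 a_5}{3} f_7$ with $(a_3, a_5) \in \{\pm\sqrt{-12}\}\times\{\pm\sqrt{48}\}$, so every Hecke eigenvalue on $V$ lies in a quadratic extension of $\Q$, proving (1). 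Comparing the first several Fourier coefficients to LMFDB data identifies these with the four Galois conjugates of $f_{256.3.c.g}$, which is a finite verification. Since the four eigenforms form a single Galois orbit of newforms, their $\C$-span $V$ is Hecke-invariant under all Hecke operators, and every $T_p$ with $p \nmid 256$ acts on $V$ through the algebra generated by $T_3$ and $T_5$ via the eigenvalue formulas for $f^\sharp \colonequals f_{256.3.c.g}$.

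Finally, write $f^\sharp = f_1 + a_3 f_3 + a_5 f_5 + \tfrac{a_3 a_5}{3} f_7$. Because the $f_j$ have disjoint support modulo $8$, for $n \equiv 1 \pmod 8$ the coefficient $a_n(f^\sharp)$ agrees with $b_n \colonequals$ the $q^n$-coefficient of $\BK_2(1/8, 1)(16\tau) = f_1$. As $f^\sharp$ is a normalized Hecke eigenform, its Fourier coefficients are multiplicative; since $m,n \equiv 1 \pmod 8$ coprime implies $mn \equiv 1 \pmod 8$, multiplicativity restricts to the $b_n$. For any prime $p \equiv 1 \pmod 8$, both shifts $n \mapsto np$ and $n \mapsto n/p$ preserve the residue class modulo $8$, so $f_1$ is itself a $T_p$-eigenform with eigenvalue $a_p(f^\sharp)$, giving the three-term Hecke recursion $b_{np} = a_p(f^\sharp)\, b_n - \chi_{-1}(p)\, p^2\, b_{n/p}$. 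The main technical obstacle throughout is the Sturm-bound verification of the Hecke action table and the identification with the LMFDB newform; once these are in place the corollary follows from linear algebra on the four-dimensional space $V$.
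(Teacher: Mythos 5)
Your approach is essentially the paper's: the text preceding the corollary likewise establishes it by directly computing the action of $T_2,T_3,T_5,T_7$ on the basis $f_j=\BK_2(j/8,1)(16\tau)$ and then reading off the relations and the eigenform by linear algebra, deferring the full Hecke-invariance of $V$ to the hypergeometric-arithmetic discussion in Rosen's paper, where you instead invoke the identification of the eigenforms with the newform orbit $256.3.c.g$; your support-mod-$8$ explanation of why $T_pf_j$ must be proportional to $f_{jp\bmod 8}$ is a welcome elaboration. One harmless slip: the eigenvector equations give $c_7=+\tfrac{a_3}{3}c_5$ (from $-4c_5=a_3c_7$ and $a_3^2=-12$), not $-\tfrac{a_3}{3}c_5$; your final eigenform is nevertheless correct.

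The one substantive gap is your justification of item (1). The eigenvalues $a_p(f^\sharp)$ a priori live in the degree-four algebra $\Q[T_3,T_5]/(T_3^2+12,\,T_5^2-48)\cong\Q(\sqrt{3},\sqrt{-3})$, so ``every Hecke eigenvalue lies in a quadratic extension of $\Q$'' is not automatic; and even if it held, it would not by itself bound the degree of the minimal polynomial of the operator $T_p$ on the four-dimensional space $V$ (a diagonal matrix with four distinct rational eigenvalues has minimal polynomial of degree four). The repair uses exactly the support observation you already made: each $f_j$ has rational coefficients and is supported on $n\equiv j\pmod 8$, so once $V$ is known to be Hecke-stable, $T_p$ is a monomial matrix over $\Q$ in the basis $\{f_1,f_3,f_5,f_7\}$ whose underlying permutation is multiplication by $p$ on $(\Z/8\Z)^\times$, a permutation of order dividing $2$. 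Hence $T_p^2$ is diagonal with rational entries, and since it admits the eigenvector $f^\sharp=f_1+a_3f_3+a_5f_5+\tfrac{a_3a_5}{3}f_7$, all of whose components are nonzero, $T_p^2$ is a rational scalar; the minimal polynomial of $T_p$ therefore divides $x^2-c$ for some $c\in\Q$, which is the assertion of (1). This same observation is what legitimizes your claim in part (3) that $f_1$ is a $T_p$-eigenform for $p\equiv 1\pmod 8$.
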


The above discussion says that for each $j\in\{1,3,5,7\}$, the $\BK_2(\frac j8,1)$ functions are in the same Hecke orbit.  We refer to this situation as the \emph{Galois} case. \begin{defn}\label{defn:galois}
    A pair $(r,s)\in\mathbb S_2$  is said to be in a \emph{Galois orbit} for the $\BK_2$-family if for any $c\in (\Z/N\Z)^\times$ where $N=\mathrm{lcd}(r,s)$, there exists $(r_c,s_c)\in \mathbb S_2$ such that $cr-r_c, cs-s_c\in\Z$,     such that $\mathbb K_2(r,s)$ and $\mathbb K_2(r_c,s_c)$ are in the same Hecke orbit. 
\end{defn} See the second paper of this series \cite{HMM2} and \cite{ENRosenK2} by Rosen for further discussions of the Galois cases and the general construction of the Hecke eigenforms. 

\begin{remark}\label{rem:1/8case}
    The Hecke eigenform $f_{256.3.c.g}$ above depends on a choice of $a_3$ and $a_5$. This demonstrates the non-uniqueness of $f^{\sharp}_{\HD}$ in \Cref{thm:main}.  When we specialize \Cref{thm:main} to \Cref{thm:mainK2version} by setting $\HD^\flat=\HD_2$, condition (1) is equivalent to $(r,s)\in\mathbb S_2$ and (2) is equivalent to $(r,s)$ being in a Galois case. See \cite[Theorem 3.4]{HMM2}.
\end{remark}

\subsection{Commutative Formal Group Law (CFGL)}\label{ss:CFGL}
The goal of the next few subsections is to show \eqref{eq:2.9} agrees modulo $p$ through the Commutative Formal Group Law  (CFGL) isomorphism.
\par
We start with notation. Let $p$ be a prime and $R$ be a $\Z_p$-algebra equipped with an endomorphism $\sigma:R\rightarrow R$ satisfying that $\sigma(a)-a^p\in pR$ for all $a \in R$. For example, let $p$ be a prime which is congruent to 3 or 5 modulo 8 and $R=\Z_p[\sqrt 2]$, then for $x+y\sqrt2\in R$ with $x,y\in\Z_p$ the map $\sigma$ defined by $\sigma(x+y\sqrt2)=x-y\sqrt2$ is such an endomorphism. 
The basic background is as follows. Assume $b_{n} \in R$ for all $n \geq 1$ and consider $$\omega(x) = \sum_{n=1}^{\infty} b_{n}x^{n-1} {dx},\quad \ell(x)=\int \omega(x)=\sum_{n=1}^\infty \frac{b_n}nx^n.$$ If there is another local uniformizer $u$ such that $x(u) = \sum_{n=1}^{\infty} a_{n}u^{n}\in R[[u]]$ and $a_1\in R^\times$, let $\tilde \ell(u)= \int \omega(x(u))$ be a power series in $u$. Then the formal groups $G(s,t)=\ell^{-1}(\ell(s)+\ell(t))$ and $\tilde G(s,t)=\tilde \ell^{-1}(\tilde \ell(s)+\tilde \ell(t))$ are isomorphic, c.f. \cite[A.3]{Stienstra-Beukers}. See \cite[Appendix]{Stienstra-Beukers} for basic examples and more details. Due to the modular form background of our setting, we will use the following version of the CFGL property, which gives $p$-adic analogues of the Hecke recursions satisfied by Hecke eigenforms. 

\begin{proposition}[\cite{Beukers-another},\cite{Stienstra-Beukers}]\label{prop:CFGL-isom}
    Let $p$, $R$ and $\sigma$ be as above. Let $\omega(x) = \sum_{n=1}^{\infty} b_{n}x^{n-1} {dx}$ with $b_{n} \in R$ for all $n \geq 1$. Let $x(u) = \sum_{n=1}^{\infty} a_{n}u^{n}\in R[[u]]$ and suppose $\omega(x(u)) = \sum_{n=1}^{\infty} c_{n}u^{n-1} {du}$ with $c_n\in R$. 

    If there exists $\alpha_{p}, \beta_{p} \in R$ with $\beta_{p}\in pR$ such that for all $m,r \in \N$,
    \begin{equation}\label{eqn:b congruence}
        b_{mp^{r}}-\sigma(\alpha_{p})b_{mp^{r-1}} + \sigma^2(\beta_{p})b_{mp^{r-2}} \equiv 0 \pmod{p^r};
    \end{equation}
    then for all $m,r \in \N$
    \begin{equation}\label{eqn:c congruence}
        c_{mp^{r}} -\sigma(\alpha_{p})c_{mp^{r-1}} + \sigma^2(\beta_{p})c_{mp^{r-2}} \equiv 0 \pmod{p^r}.
    \end{equation}
If $a_{1}$ is invertible in $R$ then (\ref{eqn:c congruence}) implies (\ref{eqn:b congruence}). Moreover, if $b_p/b_1\in R^\times$, then there exists $\mu_p\in  R^\times$ such that for $m,r\ge 1$
    \begin{equation}\label{eq:CFGL-unit}
        b_{mp^r}\equiv \mu_p \sigma(b_{mp^{r-1}})\pmod {p^r},\quad \text{and} \quad  c_{mp^r}\equiv \mu_p \sigma(c_{mp^{r-1}})\pmod {p^r}.
    \end{equation}
\end{proposition}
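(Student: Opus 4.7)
The plan is to work with the formal logarithms
$\ell(x) \colonequals \sum_{n\geq 1} b_n x^n/n$ and $\tilde\ell(u) \colonequals \sum_{n\geq 1} c_n u^n/n$, so that $\omega = d\ell$ and $\omega(x(u)) = d\tilde\ell(u)$. The hypothesis $\omega(x(u))= \sum c_n u^{n-1}du$ then becomes the simple power-series identity $\tilde\ell(u) = \ell(x(u))$. The strategy is to recast the congruences \eqref{eqn:b congruence} and \eqref{eqn:c congruence} as $p$-adic functional equations of Hazewinkel--Honda type for $\ell$ and $\tilde\ell$, and then to transfer one to the other via the substitution $x \mapsto x(u)$, following the framework set up in \cite{Stienstra-Beukers}, Appendix~A.

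First I would establish the dictionary: \eqref{eqn:b congruence} holds for all $m,r\geq 1$ if and only if there exists $h(x)\in R[[x]]$ with
\[
    \ell(x) - \frac{\sigma(\alpha_p)}{p}\,\ell^{\sigma}(x^p) + \frac{\sigma^2(\beta_p)}{p^2}\,\ell^{\sigma^2}(x^{p^2}) = h(x),
\]
where $\ell^{\sigma^i}$ denotes the series obtained by applying $\sigma^i$ to every coefficient of $\ell$. This equivalence is verified by extracting the coefficient of $x^{mp^r}$ on both sides and tracking $p$-adic valuations; the resulting relation among the coefficients is precisely \eqref{eqn:b congruence}. The same dictionary applied to $\tilde\ell$ converts \eqref{eqn:c congruence} into the analogous functional equation in $u$.

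The substitution principle then does the work: since $x(u)\in uR[[u]]$, substituting $x\mapsto x(u)$ into the functional equation for $\ell$ preserves its shape and its coefficients $\sigma(\alpha_p), \sigma^2(\beta_p)$, producing a functional equation for $\tilde\ell(u)=\ell(x(u))$. This is the content of Hazewinkel's functional equation lemma and only requires $x(u)\in uR[[u]]$, not invertibility of $a_1$. Reading this back through the dictionary yields \eqref{eqn:c congruence}. When $a_1\in R^\times$, the series $x(u)$ has a compositional inverse in $uR[[u]]$, so the same argument applied in the opposite direction gives the converse implication.

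For the moreover statement, the hypothesis $b_p/b_1\in R^\times$ places us in the Dwork-ordinary situation. Setting $r=1$ in \eqref{eqn:b congruence} and using $\sigma^2(\beta_p)\in pR$ gives $b_{mp}\equiv \sigma(\alpha_p)\,b_m\pmod p$; taking $m=1$ shows that $\sigma(\alpha_p)\in R^\times$, and $\mu_p$ is then defined as the unique unit root attached to the functional equation. Iteratively reducing the recursion modulo $p^r$ and absorbing the $\sigma^2(\beta_p)$-term (whose contribution is $p\cdot(\text{something mod }p^{r-1})$) yields the desired $b_{mp^r}\equiv \mu_p\,\sigma(b_{mp^{r-1}})\pmod{p^r}$; the analogous statement for $c_n$ follows from the transfer. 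The main technical obstacle is the first step---making the equivalence between the multilevel $p^r$-congruences and a single clean $p$-adic functional equation precise, which requires careful coefficient bookkeeping with the Frobenius twists. The remaining steps are then relatively formal consequences of the substitution principle and Dwork's unit root theory.
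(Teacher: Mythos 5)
The paper does not actually prove this proposition---it is quoted from \cite{Beukers-another} and \cite{Stienstra-Beukers}, with only the preceding remark that the formal group laws attached to $\ell$ and $\tilde\ell$ are isomorphic. Your sketch is essentially the argument of the cited appendix of Stienstra--Beukers: translate the Atkin--Swinnerton-Dyer-type congruences into a Honda/Hazewinkel functional equation for the logarithm, transport it through the substitution $x\mapsto x(u)$ (compositionally invertible exactly when $a_1\in R^\times$, which gives the converse), and extract $\mu_p$ by Hensel lifting in the ordinary case; this is the right route and each step is standard. The one point to tighten is your dictionary: reading off the coefficient of $x^{mp^r}$ (for $p\nmid m$) from the functional equation you wrote gives $b_{mp^{r}}-\sigma(\alpha_{p})\,\sigma(b_{mp^{r-1}}) + \sigma^2(\beta_{p})\,\sigma^2(b_{mp^{r-2}}) \equiv 0 \pmod{p^r}$, with the Frobenius twist landing on the coefficients $b_n$ as well---this matches the twist pattern of \eqref{eq:CFGL-unit} but is not literally the hypothesis \eqref{eqn:b congruence} as printed, so you need a sentence reconciling the two conventions (or observing that they coincide when $\sigma$ fixes the relevant coefficients modulo the appropriate power of $p$, as in the paper's applications where the only non-fixed element is the scalar $C_1^{-r_n}$).
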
 
Note that if $b_p/b_1\in R^\times$, which is referred to as the ordinary case, then by Hensel's lifting lemma, one root of the left-hand side of \eqref{eqn:b congruence} is $\mu_p$ which also satisfies that  $\mu_p\equiv b_p/b_1\equiv \sigma(\alpha_p)$ modulo $pR$. Also $\mu_p$ is referred to as the \emph{unit root} of $\omega(t)$. 
\par
Below this proposition will be applied to  the differential \eqref{eq:f} using two distinct but related local parameters to represent the function $t$. 
\subsection{The \texorpdfstring{$u$}{u}-coefficients} \label{ss:FGL}
We first express the $t$-expansion of \eqref{eq:f} in terms of terminating hypergeometric series. Throughout, the datum $\{\ba^\flat,\bbeta^\flat\}$ and $r_n,q_n$ are given as in Theorem \ref{thm:main}. For 
$\ba=\{r_1,\cdots,r_n\}$, we use $(\boldsymbol{ \alpha})_k$ to denote $\prod_{i=1}^n (r_i)_k$, as in \eqref{eq:shorthand-quotient}, where $(r)_k$ is the Pochhammer symbol. 
To ease notation, let
\[
    \gamma_n\colonequals-1+q_n-r_n.
\]
\par
\begin{lemma}\label{lem:3.3}
     Let $\{\ba^\flat,\bbeta^\flat\}$ be  as in Theorem \ref{thm:main}, and $\gamma_n$ as above. Then
$$(1-t)^{\gamma_n}F\left(\begin{array}{c}\ba^\flat\\\bbeta^\flat\end{array};t\right)=\sum_{k\ge 0} A_{\gamma_n}(k)\cdot F\left( \begin{array}{c} \{-k\}\cup\ba^\flat \\ \{q_n-r_n-k\}\cup  \bbeta^\flat \end{array};1\right)t^k, $$ where    
     $F\left( \begin{array}{c} \{-k\}\cup\ba^\flat\\ \{q_n-r_n-k\}\cup  \bbeta^\flat\end{array};1\right)$ is a terminating series as $-k\in\Z_{<0}$ and 
\begin{equation}\label{eq:A}
    A_{\gamma_n}(k)\colonequals\frac{(-\gamma_n)_k}{k!}=(-1)^k\binom{\gamma_n}k.  
\end{equation} 
\end{lemma}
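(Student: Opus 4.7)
The plan is to proceed by direct power series expansion in $t$ and comparison of $t^k$ coefficients on the two sides; this is the natural approach since the coefficients involve only standard Pochhammer manipulations.

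First I would expand $(1-t)^{\gamma_n} = \sum_{j \ge 0} A_{\gamma_n}(j) t^j$ via the binomial series, using the identity $(-1)^j \binom{\gamma_n}{j} = (-\gamma_n)_j/j!$, and expand $F(\ba^\flat, \bbeta^\flat; t) = \sum_{m \ge 0} (\ba^\flat)_m t^m/(m!)^{n-1}$ directly from the ${}_{n-1}F_{n-2}$ definition, noting that with $\bbeta^\flat = \{1,\ldots,1\}$ of length $n-1$ the denominator $(q_2)_m\cdots(q_{n-1})_m \cdot m!$ simplifies to $(m!)^{n-1}$. Multiplying the two series and collecting, the coefficient of $t^k$ on the left-hand side is
\[
\sum_{m=0}^{k} A_{\gamma_n}(k-m) \cdot \frac{(\ba^\flat)_m}{(m!)^{n-1}}.
\]

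Next I would expand the right-hand side. By definition, the appearing ${}_nF_{n-1}$ is
\[
F\left(\begin{array}{c}\{-k\}\cup\ba^\flat \\ \{q_n-r_n-k\}\cup\bbeta^\flat\end{array};1\right) = \sum_{m=0}^{k} \frac{(-k)_m (\ba^\flat)_m}{(q_n-r_n-k)_m (m!)^{n-1}},
\]
the upper limit being $k$ since $(-k)_m$ vanishes for $m>k$. Multiplying by $A_{\gamma_n}(k)$, the claim reduces to the term-by-term identity
\[
\frac{A_{\gamma_n}(k-m)}{A_{\gamma_n}(k)} = \frac{(-k)_m}{(q_n-r_n-k)_m}.
\]

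To verify this identity I would rewrite both sides as ratios of Pochhammer symbols. Using $A_{\gamma_n}(j) = (-\gamma_n)_j/j!$ together with the standard $(-k)_m = (-1)^m k!/(k-m)!$ and the reflection
\[
(q_n-r_n-k)_m = (1-(-\gamma_n)-k)_m = (-1)^m \frac{(-\gamma_n)_k}{(-\gamma_n)_{k-m}},
\]
(the latter obtained by writing out the product $\prod_{j=0}^{m-1}(1-(-\gamma_n)-k+j)$, pulling out $(-1)^m$, and reindexing), both sides collapse to $k!\,(-\gamma_n)_{k-m}/\bigl[(k-m)!\,(-\gamma_n)_k\bigr]$. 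There is no real obstacle here; the only care needed is in bookkeeping the factors of $(-1)^m$ and in respecting the paper's convention that $\bbeta$ always contains $q_1=1$ as an explicit entry so that the extra $m!$ is accounted for exactly once.
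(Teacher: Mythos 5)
Your proof is correct and takes essentially the same route as the paper: both expand the two factors, take the Cauchy product, and convert the inner sum into the terminating ${}_nF_{n-1}$ via the Pochhammer reflection $(1-a-k)_m = (-1)^m (a)_k/(a)_{k-m}$, which is exactly the identity $(a)_{k-i}=(-1)^i(a)_k/(1-a-k)_i$ the paper cites. The only cosmetic difference is that you verify the coefficient identity by expanding both sides and matching term-by-term, while the paper rewrites the left-hand coefficient directly into the terminating series.
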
 
\begin{proof}
By the binomial 
theorem, $(1-t)^{\gamma_n} = \displaystyle\sum_{k_{1} \geq 0} \frac{(-\gamma_n)_{k_{1}}}{k_{1}!}t^{k_{1}}= \displaystyle\sum_{k_{1} \geq 0} A_{\gamma_n}(k_1)t^{k_1}.$
Thus
\begin{eqnarray*}
(1-t)^{\gamma_n}F\left(\begin{array}{c}\ba^\flat\\\bbeta^\flat\end{array};t\right) &= &\sum_{k_{1},k_{2}=0}^{\infty}\frac{(-\gamma_n)_{k_1}}{k_1!}\frac{(\boldsymbol{ \ba}^\flat)_{k_2}}{(\boldsymbol{ \bbeta}^\flat)_{k_2} }t^{k_1+k_2}\\ 
&\overset{k=k_{1}+k_{2}}=& \sum_{k=0}^{\infty} \sum_{k_2=0}^{k} \frac{(-\gamma_n)_{k-k_2}}{(k-k_2)!} \frac{(\boldsymbol{ \alpha}^\flat)_{k_2}}{(\boldsymbol{ \bbeta}^\flat)_{k_2} }t^{k}\\
&=& \sum_{k\ge 0} A_{\gamma_n}(k)\,F\left( \begin{array}{c} \{-k\}\cup\ba^\flat\\ \{q_n-r_n-k\}\cup  \bbeta^\flat\end{array};1\right)t^k.
\end{eqnarray*}
In the last step, we use the identity $(a)_{k-i} = (-1)^{i} \frac{(a)_{k}}{(1-a-k)_{i}}$  from \cite{Win3X}. 
\end{proof}
Assume $r_n=\frac{m}{e}$ is in lowest terms with $e\ge 1$ and let $u=t^{1/e}$. We now multiply the expression in \Cref{lem:3.3} by $t^{r_{n}} dt/t$ to match with \eqref{eq:f}, up to a constant.  This yields the $u$-expansion from the right-hand side of \Cref{fig:main idea}
\begin{equation}\label{eq:u}
  t^{r_{n}}(1-t)^{\gamma_n}F(\ba^\flat, \bbeta^\flat; t) \frac{dt}{t}
   =  e\cdot\sum_{k=0}^{\infty}A_{\gamma_n}(k) \,F\left( \begin{array}{c} \{-k\}\cup\ba^\flat\\ \{q_n-r_n-k\}\cup  \bbeta^\flat\end{array};1\right) u^{ke+m}  \frac{du}{u}.
\end{equation} 
\par
We now take a closer look at the coefficients of the formal power series
$$\sum_{k=0}^{\infty}A_{\gamma_n}(k)\, F\left( \begin{array}{c} \{-k\}\cup\ba^\flat\\ \{q_n-r_n-k\}\cup  \bbeta^\flat\end{array};1\right) u^{ke+m}=u^m+\sum_{n>m}c_n u^m, \quad c_n\in \Q $$ appearing on the right hand side of  \eqref{eq:u}. From the above expression, $c_m=1$. In light of \eqref{eq:CFGL-unit}, we look at $c_{mp}$ where  $p\equiv 1\pmod {M(\HD)}$ and particularly $m$ being the numerator of $r_n$. From equating $ke+m=pm$, we get $$k=k_0 \colonequals (p-1)\frac{m}e=(p-1)r_n\in \Z.$$ 

First, we  relate the additional rescaling term $A_{\gamma_n}(k_0)=A_{\gamma_n}((p-1)r_n)$ to the value of a Jacobi sum modulo $p$ via the Gross--Koblitz formula recalled in \Cref{thm:gk}. 
\begin{lemma} \label{eq:gkp}
Assume the notations as before and that $p \equiv 1 \pmod{M}$. Then
    \begin{equation}\label{eq:Jac-Gp2}
        A_{\gamma_n}((p-1)r_n)\equiv  \Gamma_{p}\bigg(\frac{r_n,q_n-r_n}{q_n} \bigg)  \equiv  -J_{\bar \omega_p}(r_n,q_n-r_n) \pmod p. 
    \end{equation}    
\end{lemma}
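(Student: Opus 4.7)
The plan is to prove the two congruences in turn. The rightmost equivalence $\Gamma_p(r_n, q_n - r_n/q_n) \equiv -J_{\bar\omega_p}(r_n, q_n - r_n) \pmod p$ is essentially \Cref{cor:Jac-Gp} applied with $r = r_n$ and $s = q_n - r_n$, so that $r + s = q_n$. The hypothesis $r + s < 1$ of the corollary follows from $q_n \leq 1$ except in the boundary case $q_n = 1$, which must be treated separately via the reflection formula for $\Gamma_p$ together with the identity $J(\chi, \bar\chi) = -\chi(-1)$ for a nontrivial character $\chi$.

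For the left equivalence, the starting point is $A_{\gamma_n}((p-1)r_n) = (1-q_n+r_n)_{k_0}/k_0!$ with $k_0 := (p-1)r_n$; set also $k_1 := (p-1)(q_n - r_n)$. Both $k_0, k_1$ are integers in $[1, p-2]$ under the standing assumptions $p \equiv 1 \pmod M$ and $0 < r_n < q_n \leq 1$. The telescoping consequence of the functional equation \eqref{eq:p-gamma-shift} yields, for any $a \in \Z_p$ and positive integer $n$,
\[
  (a)_n \;=\; (-1)^n \, \frac{\Gamma_p(a+n)}{\Gamma_p(a)} \prod_{\substack{0 \leq j < n \\ a+j \in p\Z_p}} (a+j).
\]
I would apply this with $(a,n) \in \{(1-q_n+r_n, k_0),\ (1, k_0)\}$. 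In each case the correction product is empty: the congruences $1-q_n+r_n+j \equiv 0 \pmod p$ and $1+j \equiv 0 \pmod p$ have smallest nonnegative solutions $j = p-1-k_1$ and $j = p-1$ respectively, and both exceed $k_0 - 1$ precisely because $q_n \leq 1$ (which gives $k_0 + k_1 \leq p-1$) and $r_n < 1$ (which gives $k_0 < p-1$).

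Combining the two applications yields
\[
  A_{\gamma_n}(k_0) \;=\; -\,\frac{\Gamma_p(1 - q_n + p r_n)}{\Gamma_p(1 - q_n + r_n)\,\Gamma_p(1 - r_n + p r_n)}.
\]
Next, I would invoke the local analyticity of $\Gamma_p$ on $\Z_p$, which gives $\Gamma_p(x + py) \equiv \Gamma_p(x) \pmod p$, to simplify the arguments containing $pr_n$. This produces the intermediate form
\[
  A_{\gamma_n}(k_0) \;\equiv\; -\,\frac{\Gamma_p(1 - q_n)}{\Gamma_p(1 - q_n + r_n)\,\Gamma_p(1 - r_n)} \pmod p.
\]
Finally, I would apply reflection \eqref{eq:p-gamma-reflect} at $x \in \{q_n,\ q_n - r_n,\ r_n\}$. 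Using the residues $r_n \equiv -k_0$, $q_n - r_n \equiv -k_1$, and $q_n \equiv -(k_0 + k_1) \pmod p$ (so that the associated $x_0$ values are $p - k_0$, $p - k_1$, and $p - k_0 - k_1$), the three signs collapse to $-(-1)^{(p - k_0 - k_1) - (p - k_1) - (p - k_0)} = -(-1)^{-p} = 1$ for odd $p$, leaving exactly $\Gamma_p(r_n)\Gamma_p(q_n - r_n)/\Gamma_p(q_n)$.

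The main obstacle is the careful bookkeeping in two places: verifying that no Pochhammer factor lies in $p\Z_p$ (which depends delicately on the inequality $q_n \leq 1$, including the extremal case $q_n = 1$ where the near-miss $p - 1 - k_1 = k_0$ is the reason for the restriction), and tracking the signs in the triple reflection, where one uses that $p$ is odd to reduce exponents modulo $2$.
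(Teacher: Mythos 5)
Your proof is correct and follows essentially the same route as the paper's: convert the Pochhammer quotient $\frac{(1+r_n-q_n)_{k_0}}{(1)_{k_0}}$ to a ratio of $\Gamma_p$-values (the paper passes through the classical $\Gamma$ first, but the computation is identical to your telescoping formula, which is \eqref{eq:poch-p-gamma} with empty correction product), reduce modulo $p$ using continuity of $\Gamma_p$, apply the reflection formula, and invoke \Cref{cor:Jac-Gp}. Your extra care with the boundary case $q_n=1$ --- where the hypothesis $r+s<1$ of \Cref{cor:Jac-Gp} fails and one instead uses $\Gamma_p(r_n)\Gamma_p(1-r_n)=(-1)^{(r_n)_0}$ together with $J(A,\overline{A})=-A(-1)$ --- addresses a point the paper's own proof silently glosses over, so that check is worth keeping.
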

\begin{proof} Since $M=M(\HD)$, $p\equiv 1 \pmod M$, $(p-1)r_n\in \Z_{\ge 0}$. Thus
    \begin{eqnarray*}
      A_{\gamma_n}((p-1)r_n)&= &   \frac{(1+r_n-q_n)_{(p-1)r_n}}{((p-1)r_n)!}=\G\left (\frac{1+r_n-q_n+(p-1)r_n,1}{1+r_n-q_n,(p-1)r_n+1} \right)\\ &\overset{\eqref{eq:p-gamma-shift}}=&\G_p\left (\frac{1+r_n-q_n+(p-1)r_n,1}{1+r_n-q_n,(p-1)r_n+1} \right)\\&\equiv& \G_p\left (\frac{1-q_n,1}{1+r_n-q_n,1-r_n} \right) \stackrel{\eqref{eq:p-gamma-reflect}}{\equiv} \G_p\left (\frac{q_n-r_n,r_n}{q_n} \right) \pmod p\\
      &\overset{\eqref{eq:5.31}}\equiv&  -J_{\bar \omega_p}(r_n,q_n-r_n) \pmod p.
    \end{eqnarray*}
\end{proof}

Next we consider the terminating series 
$ F\left( \begin{array}{c} \{-k_0\}\cup\ba^\flat\\ \{q_n-r_n-k_0\}\cup  \bbeta^\flat\end{array};1\right) $. 
 For indices $0\le i\le (p-1)r_n+1$, the assumption $1\ge q_n>r_n>0$ of \Cref{thm:main} implies $(q_n-r_n-k_0)_i=(q_n-r_n+(1-p)r_n)_i\in\Z_p$. Further for $i$ in this range,  $$((1-p)r_n)_i\equiv (r_n)_i\pmod p,\quad (q_n-r_n+(1-p)r_n)_i\equiv (q_n)_i\pmod p.$$ Together with the additional assumption of $q_n$ being larger than at least two of the $r_i$ (see \Cref{rem:lowest-weight}), one has
\begin{lemma}Under the assumptions of \Cref{thm:main}, let $p\equiv 1\pmod{M(\HD)}$ be a prime. Then
    
\begin{multline}
    F\left( \begin{array}{c} \{(1-p)r_n\}\cup\ba^\flat\\ \{q_n-r_n+(1-p)r_n\}\cup  \bbeta^\flat\end{array};1\right)\\ \equiv F\left( \begin{array}{c} \{r_n\}\cup\ba^\flat\\ \{q_n\}\cup  \bbeta^\flat\end{array};1\right)_{p-1} 
    =F\left(  \ba, \bbeta;1\right)_{p-1} \pmod {p}.
\end{multline}
\end{lemma}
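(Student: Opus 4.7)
The plan is to establish the congruence term by term. Let $k_{0}=(p-1)r_{n}$; the LHS is a terminating series in indices $i=0,\ldots,k_{0}$, while the truncated RHS is indexed by $i=0,\ldots,p-1$. Since the two data share $\ba^{\flat}$ and $\bbeta^{\flat}$, comparing coefficients reduces to studying the ratios $((1-p)r_{n})_{i}/(q_{n}-pr_{n})_{i}$ against $(r_{n})_{i}/(q_{n})_{i}$, modulo the common factor $(\ba^{\flat})_{i}/(i!)^{n-1}$.

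First, in the overlapping range $0\le i\le k_{0}$, I would use that $(1-p)r_{n}+j=r_{n}+j-pr_{n}\equiv r_{n}+j\pmod{p}$ and $q_{n}-pr_{n}+j\equiv q_{n}+j\pmod{p}$ in $\Z_{p}$. To promote this termwise congruence to a congruence of ratios, I would check that $r_{n}+j$ and $q_{n}+j$ are $p$-adic units for $j\le k_{0}-1$: the first factor of $p$ in $(r_{n})_{i}$ appears exactly at $j=k_{0}$ (where $r_{n}+k_{0}=pr_{n}$) and the first in $(q_{n})_{i}$ at $j=(p-1)q_{n}>k_{0}$, since $q_{n}>r_{n}$. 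This yields
\[
((1-p)r_{n})_{i}/(q_{n}-pr_{n})_{i}\equiv (r_{n})_{i}/(q_{n})_{i}\pmod{p}
\]
throughout the overlapping range, so the two series match termwise there.

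Next, for the tail $k_{0}+1\le i\le p-1$ on the RHS, I would show that each coefficient lies in $p\Z_{p}$. For every such $i$, the factor $r_{n}+k_{0}=pr_{n}$ contributes at least one factor of $p$ to $(r_{n})_{i}$, while $(i!)^{n-1}$ is a unit because $i\le p-1$. If $i\le (p-1)q_{n}$, then $(q_{n})_{i}$ remains a unit and the coefficient is divisible by $p$. In the remaining range $(p-1)q_{n}<i\le p-1$---which only arises when $q_{n}<1$---the denominator $(q_{n})_{i}$ picks up exactly one $p$-factor, but the hypothesis $r_{2}<q_{n}$ of \Cref{thm:main} forces both $(r_{1})_{i}$ and $(r_{2})_{i}$ to contribute an additional $p$-factor each (since $i>(p-1)q_{n}>(p-1)r_{2}\ge (p-1)r_{1}$), so the total $p$-adic valuation of the coefficient is at least $1+1+1-1=2$.

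Combining these observations, the tail of the RHS vanishes modulo $p$ while its initial segment matches the LHS termwise. The main obstacle is the bookkeeping of $p$-adic valuations in the sub-range $(p-1)q_{n}<i\le p-1$; the condition $r_{2}<q_{n}$ built into \Cref{thm:main} is precisely what is needed to supply the two extra $p$-factors in the numerator that beat the single $p$-factor in $(q_{n})_{i}$.
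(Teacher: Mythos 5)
Your proposal is correct and follows essentially the same route as the paper: termwise congruence of the Pochhammer ratios on the overlapping range $0\le i\le (p-1)r_n$, plus a $p$-adic valuation argument (using $q_n>r_n$ and $r_2<q_n$) showing the tail $i>(p-1)r_n$ of the truncated series vanishes modulo $p$. The paper states this very tersely, delegating the tail analysis to \Cref{rem:lowest-weight} and \Cref{fig:p-slope}; you have simply filled in that bookkeeping explicitly.
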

Altogether, we have the following description for $c_{mp}$:
\begin{equation}\label{eq:cmp}
    c_{mp}\equiv  -J_{\bar \omega_p}(r_n,q_n-r_n)F\left(  \ba, \bbeta;1\right)_{p-1}  \pmod {p}.
\end{equation}
\subsection{The \texorpdfstring{$q$}{q}-coefficients and Hecke recursions}  In this section, we express the $t$-expansion of \eqref{eq:f} in terms of Fourier $q$-series.  From \Cref{subsec: Ramanujan}, we see that if we choose $t$ as an appropriate modular function of the form 
\[
    t =C_1q + O(q^2) \in  \Z[[q]],\quad C_1\neq 0
\] then the left-hand side of  \Cref{eq:u} will become a holomorphic modular form with desired properties, see for example \Cref{lem:E-eta}. We now turn our attention to the $q_e=q^{1/e}$-coefficients of this modular form, where $e$ is the denominator of $r_n$ as before.  Note that both $(1-t(q))^{\gamma_n}F(\ba^{\flat}, \bbeta^{\flat};t) q{dt(q)}/(t(q)dq)$ and $ t(q)^{r_{n}}/(C_1q)^{r_n}$ are locally formal power series in $q$. Thus as in \eqref{eq:f}
\begin{equation}\label{eq:B-coeff}
   f_{\HD}dq=  C_1^{-r_n}t^{r_{n}}(1-t)^{\gamma_n}F(\ba^{\flat}, \bbeta^{\flat};t) dt/t
    =e\sum_{l \geq 1} b_{l}q_e^{(l-1)}{d q_e},    
\end{equation}
where $b_l\in\Q$. From the assumption $T_p f_{\HD} = \tilde{b}_pf_{\HD}$ in Theorem \ref{thm:main}, the Hecke recursions imply that for every prime $p\equiv 1\pmod {M}$ where $M=M(\HD)$ and integers $l,r\ge 1$
\begin{equation}\label{eq:Hecke}
     b_{lp^r} - \tilde b_p\cdot b_{lp^{r-1}} + \varphi(p)p^{k-1} \cdot b_{lp^{r-2}} \equiv 0 \pmod{p^{r}} 
\end{equation} 
where $\varphi$ is the character of $f_{\HD}$. Note that $k\ge 3$ is the weight of $f_{\HD}$. When $r=1$, this reduces to 
\begin{equation}\label{eq:r=1}
    b_{lp} \equiv  \tilde b_p b_l \pmod p.
\end{equation} 
Further in the ordinary case, the unit root as \eqref{eq:CFGL-unit} can be computed by 
\begin{equation}\label{eq:u_f}
   u_{f_{\HD},p}\equiv b_{lp^r}/b_{lp^{r-1}} \pmod{p^r}. 
\end{equation}
\subsection{CFGLs isomorphism in this setting}
We now apply \Cref{prop:CFGL-isom} to relate the $u$ and $q$-coefficients in the previous subsections. Namely 
\begin{equation}\label{eq:qe-expansion}
\begin{split}
   \omega(t)=C_1^{-r_n}t^{r_{n}-1}(1-t)^{-(r_{n}+1-q_{n})}F(\ba^{\flat}, \bbeta^{\flat};t) {dt}/e\overset{\eqref{eq:u}}=& C_1^{-r_n}\sum _{l\ge 1}c_l u^{l-1} du\\
   \overset{\eqref{eq:B-coeff}}=&\sum_{l\ge 1} b_lq_e^{(l-1)}{d q_e}    
\end{split}
\end{equation} 
where $u=t^{1/e}$, $c_l\in \Q$ as given in Lemma \ref{lem:3.3} and in particular $c_m=1$.  Note that $M$ is a multiple of $e$ by definition. Let $p \equiv 1 \pmod{M}$ be a prime, $$R= \Z_p[C_1^{-r_n}], 
\quad \sigma(C_1^{-r_n})=C_1^{-pr_n}.$$ Thus if $C_1\in\Z_p^\times$, then 
\begin{equation}\label{eq:C1-quotient}
   C_1^{-r_n}/\sigma(C_1^{-r_n})\equiv C_1^{(p-1)r_n}\equiv \iota_p(r_n)(C_1)\pmod {pR}, 
\end{equation}  where $ \iota_p({r_n})\colonequals\left (\frac \cdot p\right)^m_e$ is the residue symbol defined in \eqref{eqn:residue symbol}. By Proposition \ref{prop:CFGL-isom}, for $l,r\ge 1$,
\[
    C_1^{-r_n}c_{lp^r}-\tilde b_p\cdot{\sigma(C_1^{-r_n})} c_{lp^{r-1}}+ \varphi(p)p^{k-1} \cdot {\sigma^2(C_1^{-r_n})}c_{lp^{r-2}}\equiv 0\pmod{p^rR}.
\]
\par
When $r=1$ this reduces to $C_1^{-r_n}c_{lp}\equiv\tilde b_p\cdot{\sigma(C_1^{-r_n})} c_{l} \pmod{pR}$. If $C_1\in\Z_p^\times$, using \eqref{eq:C1-quotient}, 
\begin{equation}\label{eq:3.19}
    \iota_p({r_n}) (C_1)\cdot  c_{lp} \equiv   \tilde b_{p}c_{l}\pmod {pR}.
\end{equation} 
In particular, when $l=m$ (the numerator of $r_n$) then using \eqref{eq:cmp} we reach the following conclusion from \eqref{eq:3.19}.
\begin{proposition}\label{prop:CFGL} 
Assume $\ba$ and $\bbeta$ are defined as in \Cref{thm:main}, $0<r_n<q_n<1$ with $r_n = m/e$, $p \equiv 1 \pmod{M}$ is a prime, and $C_1\in\Z_p^\times$. Then  
\begin{equation}\label{eq:cong-mod-p}
    -\iota_p({r_n}) (C_1)\cdot  J_{\bar \omega_p}(r_n,q_n-r_n) \cdot  F(\ba, \bbeta;1)_{p-1}\equiv  \tilde b_{p} \pmod p.
\end{equation}  
\end{proposition}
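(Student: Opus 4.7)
The proposition is essentially a packaging of the three preceding ingredients, and the plan is to assemble them in order. I would first set up the differential $\omega(t) = C_1^{-r_n} t^{r_n - 1} (1-t)^{\gamma_n} F(\ba^\flat,\bbeta^\flat;t)\, dt/e$ as in \eqref{eq:qe-expansion} and record its two local expansions: as a power series in $u = t^{1/e}$ with coefficients $C_1^{-r_n} c_l$, and as a power series in $q_e = q^{1/e}$ with coefficients $b_l$. The CFGL framework of \Cref{prop:CFGL-isom} applies because the change of parameter between $u$ and $q_e$ is invertible over $R = \Z_p[C_1^{-r_n}]$ (here using the hypothesis $C_1 \in \Z_p^\times$, so that the leading coefficient $a_1$ of the substitution is a unit).

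Next, I would apply the Hecke recursion assumption on $f_{\HD}$, namely \eqref{eq:Hecke}, to the $b_l$ with $r = 1$. This gives $b_{lp} \equiv \tilde b_p\, b_l \pmod p$ as in \eqref{eq:r=1}. By the transfer half of \Cref{prop:CFGL-isom}, the analogous congruence holds for the $C_1^{-r_n} c_l$, which after moving the Frobenius action on $C_1^{-r_n}$ across (using $\sigma(C_1^{-r_n}) \equiv C_1^{-r_n} \cdot C_1^{-(p-1)r_n} \pmod{pR}$ and the definition $\iota_p(r_n)(C_1) \equiv C_1^{(p-1)r_n} \pmod {pR}$ from \eqref{eq:C1-quotient}) yields exactly \eqref{eq:3.19}:
\[
\iota_p(r_n)(C_1) \cdot c_{lp} \equiv \tilde b_p\, c_l \pmod{pR}.
\]

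Finally, I specialize this to $l = m$, the numerator of $r_n$. On the right-hand side $c_m = 1$, as is immediate from the $u$-expansion in \eqref{eq:u} (the lowest-order term is $u^m$ with coefficient $1$). On the left-hand side $c_{mp}$ corresponds to $k = k_0 = (p-1)r_n$ in \eqref{eq:u}, and \eqref{eq:cmp} gives
\[
c_{mp} \equiv -J_{\bar\omega_p}(r_n, q_n - r_n)\, F(\ba, \bbeta; 1)_{p-1} \pmod p,
\]
where the Jacobi factor comes from $A_{\gamma_n}((p-1)r_n)$ via \Cref{eq:gkp} and the truncated hypergeometric factor from reducing $((1-p)r_n)_i \equiv (r_n)_i$ and $(q_n - r_n + (1-p)r_n)_i \equiv (q_n)_i \pmod p$ inside the terminating ${}_nF_{n-1}$. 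Substituting these two evaluations into the $l = m$ case of \eqref{eq:3.19} and rearranging gives \eqref{eq:cong-mod-p}.

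There is no real obstacle beyond bookkeeping, since every step has been prepared in \Cref{ss:CFGL}--\Cref{ss:FGL}. The one point that requires care is the compatibility of the Frobenius twist on $R$ with the residue symbol $\iota_p(r_n)$, which is why the hypothesis $C_1 \in \Z_p^\times$ is needed; without it the identity $C_1^{(p-1)r_n} \equiv \iota_p(r_n)(C_1) \pmod p$ underlying \eqref{eq:C1-quotient} would fail, and the statement would not descend cleanly to a congruence modulo $p$.
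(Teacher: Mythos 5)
Your proposal is correct and follows essentially the same route as the paper: the text preceding the proposition assembles exactly the same ingredients in the same order---the two expansions of $\omega(t)$ in \eqref{eq:qe-expansion}, the transfer via \Cref{prop:CFGL-isom} over $R=\Z_p[C_1^{-r_n}]$, the $r=1$ Hecke recursion \eqref{eq:r=1}, the unit computation \eqref{eq:C1-quotient}, and the specialization $l=m$ with $c_m=1$ and \eqref{eq:cmp}. Your closing remark on why $C_1\in\Z_p^\times$ is needed is also consistent with the paper's treatment.
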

The  result, when combined with \Cref{lem:GK-congruence} below, is equivalent to the mod $p$ version of \eqref{eq:super-combined}. See the computation below.
\begin{align*}
   -\iota_p({r_n}) (C_1)\cdot & J_{\bar \omega_p}(r_n,q_n-r_n) \cdot \left( H_p({\ba}, {\bbeta}; 1; \bar \omega_p) - \delta_{\gamma(\HD)=1}\G_p\left(\frac{\bbeta}{\ba}\right) p\right) \\
   \equiv&  -\iota_p({r_n}) (C_1)\cdot  J_{\bar \omega_p}(r_n,q_n-r_n) \cdot F(\ba, \bbeta; 1)_{p-1} \\
   \equiv & \, \tilde b_{p} =a_p(f^\sharp_{HD})   \pmod{p}.
\end{align*} 
An explicit application of the results for a datum of length three is given below.
\begin{proposition}\label{prop:K(j/8)}
   Let $\HD=\HD_2$, $r_3=\frac j8,$ where $j=1,3,5$ or 7, and $q_3=1$. Then for each prime $p\equiv 1\pmod 8$,  \begin{equation}\label{eq:3.21}
       \left(\frac2p\right)(-1)^{(p-1)/8} \pFq32{\, \,\frac{j}8&\frac12&\frac12}{\, \,1&1&1}1_{p-1}\equiv  a_p(f_{256.3.c.g}) \pmod p.
   \end{equation}
  
\end{proposition}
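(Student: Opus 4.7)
The plan is to recognize \Cref{prop:K(j/8)} as a direct specialization of \Cref{prop:CFGL} to the setting $\HD^\flat=\HD_2=\{\{\tfrac12,\tfrac12\},\{1,1\}\}$, $(r_n,q_n)=(j/8,1)$, with modular function $t=\lambda(\tau)$ from \eqref{eq:lambda-in-eta} whose leading coefficient is $C_1=16$. First I would verify the hypotheses of \Cref{thm:main} (in particular $r_2=\tfrac12<1=q_n$) and use \Cref{lem:E-eta} combined with \eqref{eq:alt-2-eta} to identify $f_\HD$ with a multiple of $\BK_2(j/8,1)$. By \Cref{cor:Galois}, for each prime $p\equiv 1\pmod 8$ the Hecke operator $T_p$ preserves the individual graded subspace $q^j\Z[[q^8]]$ of $V$, so $\BK_2(j/8,1)(16\tau)$ is itself a $T_p$-eigenform with eigenvalue $\tilde b_p=a_p(f_{256.3.c.g})$.

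Next I would compute the two character values appearing in \eqref{eq:cong-mod-p}. Writing $16=2^4$, the residue-symbol contribution collapses as
$$\iota_p\!\left(\tfrac{j}{8}\right)(16)=\iota_p\!\left(\tfrac{j}{8}\right)(2)^{4}=\iota_p\!\left(\tfrac{j}{2}\right)(2)=\iota_p\!\left(\tfrac{1}{2}\right)(2)^{j}=\left(\tfrac{2}{p}\right)^{\!j}=\left(\tfrac{2}{p}\right),$$
since $j$ is odd. For the Jacobi sum, the two characters $\iota_p(j/8)$ and $\iota_p(1-j/8)=\iota_p(j/8)^{-1}$ are mutually inverse, so the standard identity $J(\chi,\chi^{-1})=-\chi(-1)$ for nontrivial $\chi$ yields
$$J_{\bar\omega_p}\!\left(\tfrac{j}{8},\,1-\tfrac{j}{8}\right)=-\iota_p\!\left(\tfrac{j}{8}\right)(-1)=-(-1)^{(p-1)j/8}=-(-1)^{(p-1)/8},$$
using that $j$ is odd and $(p-1)/8\in\Z$ (from $p\equiv 1\pmod 8$).

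Substituting these two evaluations into \eqref{eq:cong-mod-p} then gives
$$\left(\tfrac{2}{p}\right)(-1)^{(p-1)/8}\,\pFq{3}{2}{\,\,\tfrac{j}{8}&\tfrac{1}{2}&\tfrac{1}{2}}{\,\,1&1&1}{1}_{p-1}\equiv \tilde b_p=a_p(f_{256.3.c.g})\pmod{p},$$
which is precisely \eqref{eq:3.21}. There is no serious obstacle: the substantive content has already been packaged into \Cref{prop:CFGL} (the CFGL isomorphism plus the Gross--Koblitz-type mod $p$ reduction of \Cref{eq:gkp}) together with the Hecke analysis of \Cref{cor:Galois}. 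The only task specific to this proposition is the residue-symbol arithmetic above, which collapses cleanly because $p\equiv 1\pmod 8$ makes both $(p-1)/2$ and $(p-1)/8$ integers and renders $\iota_p(j/8)$ well-defined on $\F_p$.
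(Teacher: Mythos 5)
Your proposal is correct and follows essentially the same route as the paper: both reduce the statement to \Cref{prop:CFGL} after identifying $f_{\HD}$ with $\BK_2(j/8,1)$ via \eqref{eq:K2(1/8,1)} and invoking the Hecke analysis of \Cref{ss:K2} to get $\tilde b_p=a_p(f_{256.3.c.g})$, and then evaluate $\iota_p(\frac j8)(16)=\left(\frac2p\right)$. The only cosmetic difference is that you compute the remaining factor as the Jacobi sum $J(\chi,\bar\chi)=-\chi(-1)$ while the paper computes the congruent quantity $A_{\gamma_3}((p-1)j/8)\equiv(-1)^{(p-1)/8}\pmod p$; these agree by \Cref{eq:gkp}, so the arguments coincide.
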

\begin{proof}
    Note that   $\gamma_3=-\frac j8.$ Thus 
    \[
        t^{ \frac j8} (1-t)^{-\frac j8}\pFq21{\, \,\frac12&\frac12}{\,\,1&1}{t}\frac{dt}{t} =8\sum_{k\ge 1} \frac{(\frac j8)_k}{k!} \pFq32{\, \,-k&\frac12&\frac12}{\, \, 1&\frac {8-j}8-k&1}1 u^{ 8k+j} \frac{du}u.
    \]
    Letting $t=\lambda$ and noting that  $C_1=16$, we get  \eqref{eq:K2(1/8,1)}. From the discussion in \Cref{ss:K2}, the $q$-expansion (after re-scaling) is a Hecke eigenform for each $T_p$ when $p\equiv 1\pmod 8$. Thus $\tilde b_p=a_p(f_{256.3.c.g})$.  Further,  $\iota_p(\frac j8)(C_1)=\left(\frac{16}p\right)^j_8=\left(\frac2p\right)$ and 
    \[
        A_{\gamma_3}((p-1)r_3)\equiv (-1)^{(p-1)/8}\pmod p.
    \]
    The claim then follows from the \Cref{prop:K(j/8)}.
\end{proof}

In \Cref{sec:super}, the mod $p^2$ supercongruence corresponding to \Cref{prop:CFGL}  will be obtained, as stated in \Cref{thm:supercongruences}. Before that, we recall some basic information about hypergeometric character sums and representations.
%

\section{Hypergeometric  Galois representations }\label{sec:character sums}
\subsection{Katz's theorem} 
We now recall an alternative way from \eqref{eq:P-H} to express the normalized character sum $H_q(\ba,\bbeta;\l;\wp)$ following McCarthy \cite{McCarthy}, which will be used in the next section.
\begin{equation}\label{eq:H}
    H_q(\ba,\bbeta;\l;\wp)\colonequals\frac {1 }{1-q} \sum_{\chi\in \widehat{\kappa_\wp^\times}} \chi((-1)^n\l)
    \prod_{j=1}^n \frac{\g(\iota_\wp(r_j)\chi)}{\g(\iota_\wp(r_j))}
    \frac{\g(\iota_\wp(-q_j) \overline \chi)}{\g(\iota_\wp(-q_j))}, \quad \l \in \kappa_\wp^\times.
\end{equation} 
The condition of $\wp$ being a prime ideal of $\Z[\zeta_{M}]$ can be replaced by being a prime ideal of $\Z[1/M,1/\l]$ when $\HD$ is defined over $\Q$ by work of Beukers, Cohen, and Mellit \cite{BCM}.  More specifically, the importance of $\HD$ being defined over $\Q$ is that at each fiber defined over $\Q$ the corresponding Galois representation, a priori for the Galois group $\text{Gal}(\overline \Q/\Q(\zeta_M))$, can be extended to a representation of the absolute Galois group $G_\Q\colonequals\text{Gal}(\overline \Q/\Q)$ whose trace function is, up to a linear character, given by the $H_{p}$ function.  
\par
In this paper, we relax the condition of $\HD$ being defined over $\Q$.  In the following, we will first recall Katz's result on the hypergeometric Galois representations and then discuss extendable Galois representations. 

Let $G(M)\colonequals\text{Gal}(\overline \Q/\Q(\zeta_M))$, $\iota_\wp$ as in \eqref{eqn:residue symbol} and $\P(\ba,\bbeta;\cdot;\wp)$ \eqref{eq:P-by-induction} as before, so the order of entries in $\ba,\bbeta$ matters.

\begin{theorem}[Katz \cite{Katz90, Katz09}]\label{thm:Katz}  Let  $\ell$ be a prime. Given a primitive pair of multi-sets $\ba=\{r_1,\cdots,r_n\}$, $\bbeta=\{q_1=1,q_2,\cdots,q_n\}$ with $M = M(\HD)$, 
for
any $\l \in \Z[\zeta_M,1/M]\smallsetminus \{0\}$ the following hold. 
\begin{itemize}
\item [i).]There exists an $\ell$-adic Galois representation $\rho_{\{\HD;\l\}}: G(M)\rightarrow GL(W_{\l})$ unramified almost everywhere such that at each nonzero
prime ideal $\mathfrak{p}$ of  $\Z[\zeta_M,1/(M\ell \l)]$ of norm $N(\mathfrak{p})=|\Z[\zeta_M)]/\mathfrak{p}|$ 
\begin{equation}\label{eq:Tr1} \Tr \rho_{\{\HD;\l\}}(\text{Frob}_\mathfrak{p})= (-1)^{n-1}  \iota_{\mathfrak{p}}(r_1) (-1)\cdot \P(\ba,\bbeta; 1/\l;\mathfrak{p}),  
\end{equation} 
where $\Frob_\mathfrak{p}$ denotes the  geometric  Frobenius conjugacy class
of $G(M)$ at $\mathfrak{p}$. 
\item[ii).] When $\l\neq 1$,  the dimension $d \colonequals dim_{\overline \Q_\ell}W_{\l}$ equals $n$ and all roots of the characteristic polynomial of $\rho_{\{\HD;\l\}}(\Frob_\mathfrak{p})$  are algebraic numbers and have the same absolute value $N(\mathfrak{p})^{(n-1)/2}$ under all archimedean embeddings. If $\rho_{\{\HD;\l\}}$ is self-dual, namely isomorphic to any of its complex conjugates, then $W_{\l}$ admits a non-degenerate alternating (resp. symmetric)   bilinear pairing if
$n$ is even and $\gamma(\HD)\in\Z$ (resp. otherwise).
\item[iii).] When $\l=1$, in the self-dual case the dimension is $n-1$. All roots of the Frobenius eigenvalues at $\wp$  have absolute value less or equal to $N(\mathfrak{p})^{(n-1)/2}$.  If $\rho_{\{\HD;\l\}}$ is self-dual, it contains a subrepresentation that admits a non-degenerate alternating (resp. symmetric)   bilinear pairing if
$n$ is even and $\gamma(\HD)\in\Z$ (resp. otherwise). For this subrepresentation, the roots of the characteristic polynomial of $\mathrm{Frob}_{\mathfrak{p}}$ have absolute value exactly $N(\mathfrak{p})^{(n-1)/2}$.
\end{itemize}
\end{theorem}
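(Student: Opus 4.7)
The plan is to follow Katz's construction of hypergeometric local systems via iterated middle (multiplicative) convolution with Kummer sheaves, then read off the trace formula, weights, and self-duality from that construction. On $\mathbb{G}_m$ over $\Z[\zeta_M,1/M\ell]$ I would build the lisse $\ell$-adic hypergeometric sheaf $\mathcal{H}yp_{\HD}$ as an $n$-fold middle convolution of Kummer sheaves $\mathcal{L}_{\chi}$, where the characters $\chi$ correspond under class field theory to the rational parameters $r_i$ and $q_j$; this is the geometric avatar of the inductive integral representation \eqref{eq:inductive}. The representation $\rho_{\{\HD;\lambda\}}$ is then defined as the Galois action on the stalk $W_\lambda = (\mathcal{H}yp_{\HD})_{\lambda}$, and is unramified away from $\{0,1,\infty\}$, hence unramified at every prime $\mathfrak{p}$ of $\Z[\zeta_M, 1/(M\ell\lambda)]$.

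For part (i) I would derive \eqref{eq:Tr1} from Grothendieck--Lefschetz applied step by step along the convolution. Each convolution with a Kummer sheaf $\mathcal{L}_{\iota_\wp(q_i-r_i)}$ after twisting by $\mathcal{L}_{\iota_\wp(r_i)}$ produces exactly the inner sum $\sum_{x\in\kappa_\wp} R_i(x)\overline{R_iQ_i^{-1}}(1-x)$ appearing in \eqref{eq:P-by-induction}; iterating $n-1$ times gives the $\P$-function up to an explicit sign, and the prefactor $(-1)^{n-1}\iota_\wp(r_1)(-1)$ tracks the Kummer normalization and the single outer Kummer twist that is not absorbed by convolution.

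For part (ii), the lisseness of $\mathcal{H}yp_{\HD}$ on $\mathbb{G}_m \setminus \{1\}$ and Deligne's purity theorem (Weil II) force the Frobenius eigenvalues at $\mathfrak{p}$ to have the uniform archimedean size $N(\mathfrak{p})^{(n-1)/2}$, since each Kummer sheaf is pure of weight zero and each middle convolution raises the weight by one, for a total Tate twist of $-(n-1)$ in the sheaf we have normalized. The rank equals $n$ by the generic rank computation of Beukers--Heckman, which matches Katz's sheaf-theoretic calculation. Self-duality is equivalent to the multisets $\ba$ and $\bbeta$ being stable under $r\mapsto 1-r$; when this holds, the induced pairing on $W_\lambda$ is alternating exactly in the case $n$ even with $\gamma(\HD)\in\Z$ and symmetric otherwise, via Katz's sign analysis of the duality on the Kummer sheaves and the parity of the convolution.

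For part (iii), $\lambda=1$ is a singular point of $\mathcal{H}yp_{\HD}$, at which the local monodromy in the self-dual case is a pseudo-reflection; the drop in rank produces an $(n-1)$-dimensional subquotient that carries the induced non-degenerate bilinear form, with parity as in (ii). The inequality $|\alpha|\le N(\mathfrak{p})^{(n-1)/2}$ is Deligne's bound for mixed sheaves, while equality on the pure subquotient is again purity applied to the weight-filtered graded. The main obstacle throughout is not conceptual but notational: reconciling Katz's sign and Frobenius conventions with the particular normalization of $\P(\ba,\bbeta;\cdot;\wp)$ and of $\iota_\wp$ used here, which must be tracked carefully to land on the exact prefactor $(-1)^{n-1}\iota_\wp(r_1)(-1)$ in \eqref{eq:Tr1}.
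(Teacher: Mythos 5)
You should note at the outset that the paper itself gives no proof of \Cref{thm:Katz}: it is stated as a quotation of Katz's results \cite{Katz90, Katz09} (with the trace normalization matched to the $\P$-function of \cite{Win3X}), so there is no internal argument to compare yours against. Judged against the literature, your sketch follows the genuinely standard route --- iterated middle convolution of Kummer sheaves on $\mathbb{G}_m$, Grothendieck--Lefschetz for the trace function, Weil II purity for the archimedean absolute values, generic rank $n$ with a pseudo-reflection as local monodromy at $\l=1$ accounting for the drop to $n-1$ --- and this is indeed how Katz proves the theorem. So the approach is right, and nothing in it would fail.

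That said, the three places where your proposal says ``up to an explicit sign,'' ``via Katz's sign analysis,'' and ``with parity as in (ii)'' are exactly where all of the nontrivial, convention-sensitive content of the statement lives, and none of them is actually carried out. The prefactor $(-1)^{n-1}\iota_{\mathfrak{p}}(r_1)(-1)$ in \eqref{eq:Tr1} depends on the specific inductive normalization \eqref{eq:P-by-induction} (including the product $\prod_{i=2}^n R_iQ_i(-1)$ and the choice of geometric versus arithmetic Frobenius), and asserting that it ``tracks the Kummer normalization'' is not a verification. Likewise, the criterion that the pairing is alternating precisely when $n$ is even and $\gamma(\HD)\in\Z$ --- with $\gamma(\HD)=-1+\sum_i(q_i-r_i)$ as in \eqref{eq:gamma-defn} --- is a specific translation of Katz's determination of the sign of the autoduality into the parameters of $\HD$, and your sketch gives no mechanism for deriving it; one also needs care with the paper's notion of self-duality (``isomorphic to any of its complex conjugates''), which is not literally the same as stability of $\ba,\bbeta$ under $r\mapsto 1-r$ without an argument. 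If the intent is to cite Katz, as the paper does, your outline is a fair gloss; if the intent is to supply a proof, these sign and parity computations are the proof, and they are missing.
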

\begin{example}\label{eg:GLC-Hp}
Let $\ba=\{\frac12,\frac12\},\bbeta=\{1,1\}$. Then for any $\l\in\Q$, with $\l\neq 0,1$, the representation $\rho_{\{{\HD;\l\}}}$ associated to $\HD = \{\ba,\bbeta\}$ is isomorphic to the 2-dimensional $\ell$-adic representation of $G_\Q$ arising from the Legendre elliptic curve $ E_{1/\l}$ given in \eqref{eq:LC} twisted by the Dirichlet Character $\left(\frac{-1}{\cdot}\right)$. 
 Namely for each odd prime $p$ of $\Z$ such that $\l$ can be embedded in $\Z_p^\times$,
\begin{align*}
 \Tr \rho_{\{\HD_2;\l\}}(\text{Frob}_p) &= -\left(\frac{-1}p\right)\P(\ba,\bbeta;{ 1/\l};p) \\ &=H_p(\ba,\bbeta;1/\l;p){= p+1- \left(\frac{-1}p\right) \# (E_{1/\l}/\F_p). }
\end{align*}
\end{example}

\begin{remark}\label{rem:char-sum-as-trace}
    The above theorem implies  $(-1)^{n-1}\P(\ba,\bbeta;\l;\mathfrak{p})$, or its normalization $H_{q}(\ba,\bbeta;\l;\mathfrak{p})$, can be thought of as the trace of a finite-dimensional representation of $G(M)$ evaluated 
    at $\Frob_\mathfrak{p}$.  Hence, each finite hypergeometric function is the sum of the roots of the corresponding characteristic polynomial. 
    For example, when $\HD=\{\{\frac12,\frac12,\frac12,\frac12\},\{1,1,1,1\}\}$, \Cref{eg:2.3}  says $$\rho_{\{\HD;1\}}\simeq \rho_{f_{8.4.a.a}}\oplus \epsilon_\ell.$$  Equivalently, for each odd prime $p$, 
    $$
       \Tr \rho_{\{\HD;1\}}(\text{Frob}_p)=-\left(\frac{-1}p\right)\P(HD;1)=H_p(\HD;1)=a_p(f_{8.4.a.a})+p. 
    $$ 
\end{remark}\smallskip
\subsection{Extendable Galois representations}\label{ss:extendable}
Note that  condition (2) of \Cref{thm:main} is equivalent to $\chi_{\HD}\otimes \rho_{\{\HD;1\}}-\delta_{\gamma(\HD)=1}\psi_{\HD}\epsilon_\ell|_{G(M)}$ of $G(M)$ being extendable to $G_\Q$, see \eqref{eq:main-Galois}. From the representation point of view, this boils down to the following well-known result:
\begin{proposition}\label{prop:extension}Let $M$ be a positive integer.
Assume $\rho$ is a semi-simple finite dimensional $\ell$-adic representation of $G(M)$  which is isomorphic to $\rho^\tau$ for each $\tau\in G_\Q$, then $\rho$ is extendable to $G_\Q$.  Equivalently, for each nonzero prime ideal $\mathfrak{p}$ of $\Z[\zeta_M]$ unramified for $\rho$, $\Tr \,\rho(\Frob_\wp)\in\Z$.  
\end{proposition}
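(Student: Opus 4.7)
The plan is to establish both parts of the proposition—the trace characterization and extendability—via Chebotarev density, the Brauer--Nesbitt theorem, and a standard extension argument for representations of normal subgroups.

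For the trace characterization, if $\rho \simeq \rho^\tau$ for every $\tau \in G_\Q$ then $\Tr \rho(g) = \Tr \rho(\tau g \tau^{-1})$ for all $g \in G(M)$. Applying this at $g = \Frob_\wp$ gives $\Tr \rho(\Frob_\wp) = \Tr \rho(\Frob_{\wp^\tau})$; as $\tau$ varies, the primes $\wp^\tau$ exhaust the primes of $\Z[\zeta_M]$ lying over a given rational prime $p$, so the common value is an algebraic integer in $\Z[\zeta_M]$ fixed by $\Gal(\Q(\zeta_M)/\Q)$, and therefore lies in $\Z$. Conversely, if all such traces are in $\Z$ then the same identity shows $\Tr \rho$ is $G_\Q$-invariant on Frobenius classes; Chebotarev density upgrades this to the equality $\Tr \rho = \Tr \rho^\tau$ of continuous class functions on $G(M)$, and by semi-simplicity Brauer--Nesbitt then forces $\rho \simeq \rho^\tau$ for every $\tau$.

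For extendability, I would write $\rho = \bigoplus_i \rho_i^{\oplus m_i}$ as its isotypic decomposition. The hypothesis $\rho \simeq \rho^\tau$ forces the conjugation action of $\Delta \colonequals G_\Q/G(M) \simeq (\Z/M\Z)^\times$ on the isomorphism classes of the $\rho_i$ to be trivial, so it suffices to extend each $\rho_i$ individually. A standard Clifford-theoretic construction picks intertwiners $\phi_\tau \colon \rho_i \to \rho_i^\tau$ for representatives $\tau$ of $\Delta$ and assembles them into a candidate extension $\tilde\rho_i(\tau g) = \phi_\tau \rho_i(g)$; well-definedness is controlled by a class in $H^2(\Delta, \overline{\Q}_\ell^\times)$. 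In our setting this is handled cleanly by passing to pseudo-characters: the $G_\Q$-invariant trace function defines a pseudo-character on $G_\Q$ which, in the semi-simple $\ell$-adic setting, is realized by a genuine representation whose restriction to $G(M)$ has the same trace as $\rho$ and therefore (by Brauer--Nesbitt again) is isomorphic to $\rho$.

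The main obstacle is the Schur-type obstruction in $H^2(\Delta, \overline{\Q}_\ell^\times)$, which is a priori nontrivial when $\Delta = (\Z/M\Z)^\times$ is not cyclic. The pseudo-character approach sidesteps this by building the $G_\Q$-representation abstractly from trace data rather than gluing intertwiners, with the rigidity of semi-simple representations under Brauer--Nesbitt then ensuring that the result restricts to $\rho$ as required.
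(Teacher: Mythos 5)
The paper states this proposition as ``well-known'' and supplies no proof of it, so there is nothing to compare your argument against line by line; judged on its own terms, your proposal identifies the right ingredients (Chebotarev plus Brauer--Nesbitt for the trace criterion, Clifford theory for the extension), but two of its steps do not go through as written.

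First, in the trace characterization you pass from ``$\Tr\,\rho(\Frob_\wp)$ is constant on the orbit $\{\wp^\tau\}$'' to ``the common value is fixed by $\Gal(\Q(\zeta_M)/\Q)$, hence lies in $\Z$.'' These are two different Galois actions --- one permuting the primes of $\Z[\zeta_M]$, the other acting on the coefficients --- and constancy of the trace over the orbit of primes places no constraint on the value itself. What is needed is the compatibility $\sigma\bigl(\Tr\,\rho(\Frob_\wp)\bigr)=\Tr\,\rho(\Frob_{\wp^\sigma})$; this holds for the representations of this paper because their Frobenius traces are the explicit character sums $\P(\HD;1;\wp)$, but it fails for an abstract semi-simple $\ell$-adic representation: restricting a cubic Dirichlet character of $G_\Q$ to $G(M)$ gives a $\rho$ with $\rho\simeq\rho^\tau$ for all $\tau$ (which even extends, namely by the character itself) whose Frobenius traces are cube roots of unity. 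The same unstated compatibility is needed in your converse direction, where integrality of each individual trace does not by itself force the traces at the various $\wp$ above a fixed $p$ to coincide. Second, and more seriously, the pseudo-character ``sidestep'' of the Schur obstruction in $H^2((\Z/M\Z)^\times,\overline\Q_\ell^\times)$ is not a proof: your trace function is defined only on the normal subgroup $G(M)$, and a conjugation-invariant pseudo-character on a normal subgroup does not canonically produce one on the whole group. Indeed, since over $\overline\Q_\ell$ every pseudo-character is the trace of a semi-simple representation, extending the pseudo-character to $G_\Q$ is \emph{equivalent} to extending the representation, so the cohomological obstruction you correctly identify as the main obstacle has been renamed rather than removed. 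A workable route is to lift the associated projective representation of $G_\Q$ using Tate's vanishing theorem $H^2(G_\Q,\overline\Q_\ell^\times)=0$ and then correct the restriction to $G(M)$ by a character twist, or --- as the paper in effect does in \Cref{sec:hgGalois} --- to exhibit the extension explicitly as the Deligne representation of a concretely constructed Hecke eigenform.
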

As the Frobenius traces are given by character sums, there are cases the extendability can be seen from character sum identities of \cite{Win3X,Greene,LLT2}. For example, when $\HD=\{\{\frac12,\frac12,\frac14\},\{1,1,\frac34\}\}$ we use Proposition 1 of \cite{LLT2} to obtain 
\begin{equation}\label{eq:Ptransform}
\P\left(\left\{\frac12,\frac12,\frac14\right\},\left\{1,1,\frac34\right\};1;\wp\right)=\P\left(\left\{\frac12,\frac12,\frac34\right\},\left\{1,1,\frac14\right\};1;\wp\right)
\end{equation} for each nonzero prime ideal $\wp\in \Z[\sqrt{-1}]$. 
\par
We now verify the conditions of \Cref{thm:main} for the above example. Note that $f_{\HD}$ in condition (1) can be chosen to be either $\mathbb{K}_{2}\left(\frac{1}{4},\frac{3}{4}\right)(8 \tau)$ or $\mathbb{K}_{2}\left(\frac{3}{4},\frac{5}{4}\right)(8 \tau)$, as these modular forms are in the same Hecke orbit. Further, the identity \eqref{eq:Ptransform} establishes condition (2). Therefore, we conclude that $$\P\left(\left\{\frac12,\frac12,\frac14\right\},\left\{1,1,\frac34\right\};1;p\right) = a_{p}(f_{32.3.c.a}),$$ for primes $p \equiv 1 \pmod{4}$ by \Cref{thm:main} and \eqref{eq:K2(1/4,3/4)}.

Note that condition (2) of \Cref{thm:main} cannot always be established directly with a character sum identity, such as in \eqref{eq:Ptransform}. For example,  to our knowledge, the two functions $\P(\{1/2,1/2,1/8\},\{1,1,1\};1;\mathfrak{p})$ and $\P(\{1/2,1/2,3/8\},\{1,1,1\};1;\mathfrak{p})$ cannot be directly related by character sum identities. There are several other cases where the supplies of character sum identities are seemingly insufficient for condition (2) of \Cref{thm:main}. For these cases, we use a complementary approach via the notation of being Galois, defined in \Cref{defn:galois}.

\par

Applying this Proposition together with part iii) of \Cref{thm:Katz}, we have the following.
\begin{proposition}\label{prop:4.3}
        Let $\HD$ be a length $n \in \{3,4\}$ primitive hypergeometric datum and assume that $\chi_{\HD}\otimes\rho_{\{\HD;1\}}$ of $G(M)$ has an extension $\hat \rho_{\{\HD;1\}}$ to $G_\Q$, which is not unique in general. 
        Then
    \begin{itemize}
        \item[1).] If $n=3$, then  $\hat \rho_{\{\HD;1\}}$ is a 2-dimensional representation of $G_\Q$, at each  prime $p\nmid 2M$, and $\hat \rho_{\{\HD;1\}}(\Frob_p)$ has two roots of the same absolute value $p$. In this case, we let $\hat \rho_{\{\HD;1\}}^{\text{prim}}$ be $\hat \rho_{\{\HD;1\}}$. 
        \item[2).] If $n=4$, then  $\hat \rho_{\{\HD;1\}}$ is a 3-dimensional representation of $G_\Q$. 
        
        2a). If $\gamma(\HD)\in \Z$, then $\hat \rho_{\{\HD;1\}}$ is reducible and has a 2-dimensional subrepresentation $\hat \rho_{\{\HD;1\}}^{\text{prim}}$ of $G_\Q$ whose corresponding eigenvalues at unramified Frobenius element $\Frob_p$  are both of absolute value $p^{3/2}$.
        
        2b). If $\gamma(\HD)\notin \Z$, then $\hat \rho_{\{\HD;1\}}$ could be irreducible.
    \end{itemize}
\end{proposition}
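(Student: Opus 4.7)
The plan is to read each claim directly off \Cref{thm:Katz} part iii) applied to $\rho_{\{\HD;1\}}$, and then use the extension hypothesis together with \Cref{prop:extension} to transport the conclusions to $\hat\rho_{\{\HD;1\}}$ and, in part 2a), to its canonical maximum-weight subrepresentation.

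First I would fix the dimension. Since $\chi_{\HD}\otimes\rho_{\{\HD;1\}}$ and its extension $\hat\rho_{\{\HD;1\}}$ share the same underlying vector space, and twisting by the finite-order character $\chi_{\HD}$ preserves dimension, \Cref{thm:Katz} iii) gives $\dim \hat\rho_{\{\HD;1\}}=n-1$, which is $2$ when $n=3$ and $3$ when $n=4$. I would next transfer the absolute-value bound $\le p^{(n-1)/2}$ on Frobenius eigenvalues from $\rho_{\{\HD;1\}}$ to $\hat\rho_{\{\HD;1\}}$: twisting by a finite character multiplies each eigenvalue by a root of unity and hence preserves absolute values, and the bound passes from primes split in $\Q(\zeta_M)$ to all primes $p\nmid 2M$ by comparing Frobenius conjugacy classes over a fixed rational prime.

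For part 1), when $n=3$ the primitive orthogonal subrepresentation supplied by \Cref{thm:Katz} iii) is forced to be $2$-dimensional: any invariant $1$-dimensional orthogonal subspace gives a quadratic character whose Frobenius values are $\pm 1$, incompatible with the required absolute value $p$. So the full $2$-dim representation has both Frobenius eigenvalues of absolute value $p$, and we take $\hat\rho_{\{\HD;1\}}^{\mathrm{prim}}=\hat\rho_{\{\HD;1\}}$. For part 2a), $n=4$ together with $\gamma(\HD)\in\Z$ forces a symplectic primitive subrepresentation, necessarily of even dimension and properly contained in the $3$-dim ambient, hence $2$-dimensional with Frobenius eigenvalues of absolute value $p^{3/2}$. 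For part 2b), neither a $1$- nor a $2$-dimensional invariant orthogonal subspace is possible, by the same parity and determinant arguments (a $2$-dim invariant orthogonal subspace would force the product of the two Frobenius eigenvalues to be $\pm 1$ and simultaneously $p^3$); the primitive subrepresentation must therefore fill the full $3$-dim space, which can very well remain irreducible as in the $\mathrm{SO}_3$-type situation.

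The one nontrivial step will be the promotion in part 2a) of the primitive subrepresentation from $G(M)$ to $G_\Q$. I plan to argue by canonicity: the primitive subrep is cut out inside $\rho_{\{\HD;1\}}$ by the condition that Frobenius eigenvalues achieve the Weil bound, a property preserved under Galois conjugation $\rho\mapsto\rho^\tau$ for $\tau\in G_\Q$. Since $\chi_{\HD}\otimes\rho_{\{\HD;1\}}$ is assumed to extend (hence is isomorphic to each of its $G_\Q$-conjugates), the corresponding isomorphism restricts to an isomorphism of the canonical subrepresentations, and \Cref{prop:extension} then yields the desired $2$-dim subrepresentation $\hat\rho^{\mathrm{prim}}_{\{\HD;1\}}\subset\hat\rho_{\{\HD;1\}}$ of $G_\Q$. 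This canonicity/descent step is the main conceptual obstacle; the rest of the proposition is an unfolding of \Cref{thm:Katz} iii).
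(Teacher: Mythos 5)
Your proposal is correct and follows essentially the same route as the paper, which derives \Cref{prop:4.3} in one line from \Cref{prop:extension} together with part iii) of \Cref{thm:Katz}, and (inside the proof of \Cref{thm:main}) uses exactly your parity argument for case 2a): the alternating pairing forces the distinguished subrepresentation to have even dimension, hence dimension $2$ inside the $3$-dimensional space. The only caveat is that your side remarks ruling out $1$- or $2$-dimensional orthogonal subspaces via Frobenius values $\pm 1$ ignore the Tate twist on the pairing; this does not affect the conclusion, since Katz's statement already gives absolute value exactly $N(\mathfrak{p})^{(n-1)/2}$ on the distinguished subrepresentation, and part 2b) only asserts that $\hat\rho_{\{\HD;1\}}$ \emph{could} be irreducible.
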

\begin{remark} \label{remakr:length4} 
If $\HD$ has length 4, then assumption (1) of \Cref{thm:main} implies that  $\gamma(\HD) \in \Z$ and hence $\rho_{\{\HD; 1\}}$ falls into the case of 2a) above. To be more precise, 
$f_{HD}$ being a modular form means that $F(\ba^\flat,\bbeta^\flat;t)$ is a modular form on an arithmetic triangle group. The only cases are $\ba^\flat=\{\frac12, \frac1d, 1-\frac1d\}$ for the triangle groups of the form $(2, m, \infty)$, where $m=\infty,3,4$ when $d=2,3,4$ respectively, see \Cref{sec:triangle}. Thus $t$ takes value $1$ at the elliptic point of order $2$ so $\gamma(\HD^\flat)$ is a half integer.  In particular, when $0<r_n<q_n\le 1$ and $q_n-r_n=1/2$,  $\gamma(\HD) \in \Z$.

\end{remark}

In both cases, the conclusion of \Cref{thm:main} can be rephrased as follows.
\begin{proposition}\label{prop:4.8}
Assume the notation and assumptions of \Cref{thm:main}. The representation  $\hat \rho_{\{\HD;1\}}^{\text{prim}}$ is modular. For each prime $p\equiv 1 \pmod{2M}$, 
 $$\Tr \hat \rho_{\{\HD;1\}}^{\text{prim}}(\Frob_p)=\tilde b_p, \, \text{ where } \,  T_pf_{\HD}=\tilde b_pf_{\HD}.$$ 
\end{proposition}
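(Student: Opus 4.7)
The plan is to establish the trace identity for $p \equiv 1 \pmod{2M}$ by combining the CFGL isomorphism with the mod $p^2$ supercongruences, to use Weil--Deligne bounds to upgrade the congruence to an equality, and to deduce modularity by matching with $f_{\HD}^\sharp$.

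First, by \Cref{thm:Katz}(i) and \Cref{prop:4.3}, for each prime $p \equiv 1 \pmod{M}$ unramified in $\hat\rho_{\{\HD;1\}}$, I would write
\[
\Tr \hat\rho^{\text{prim}}_{\{\HD;1\}}(\Frob_p) = -\iota_p(r_n)(C_1) \prod_{i=2}^{n-1}\iota_p(r_i)(-1)\,\P(\HD;1;p) - \delta_{\gamma(\HD)=1}\,\psi_{\HD}(p)\,p,
\]
after subtracting the cyclotomic summand that appears when $\gamma(\HD)=1$ (case 2a of \Cref{prop:4.3}). Next I would sharpen \Cref{prop:CFGL} from a mod $p$ to a mod $p^2$ statement: in the ordinary case, the refined unit-root identity \eqref{eq:CFGL-unit} of \Cref{prop:CFGL-isom} identifies, modulo $p^2$, the CFGL unit root of $f_{\HD}$ with the Dwork unit root $\mu_{\ba,\bbeta,1,p}$ of $\omega(t)$, up to the normalization $-\iota_p(r_n)(C_1) J_{\bar\omega_p}(r_n,q_n-r_n)$. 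Because $f_{\HD}$ is a weight-$n$ Hecke eigenform for $T_p$ with $n\geq 3$, the Hecke recursion \eqref{eq:Hecke} at $r=2$ forces $\tilde b_p$ to coincide with this unit root modulo $p^{n-1}$, hence modulo $p^2$. Feeding this into \Cref{thm:supercongruences} yields
\[
\tilde b_p \equiv \Tr \hat\rho^{\text{prim}}_{\{\HD;1\}}(\Frob_p) \pmod{p^2}.
\]

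Both sides are integers: $\tilde b_p$ by condition (1) of \Cref{thm:main}, and the right-hand side by condition (2) combined with \Cref{prop:extension} (so that the Frobenius trace of $\hat\rho^{\text{prim}}_{\{\HD;1\}}$ is rational integral at unramified primes). Both have absolute value at most $2p^{(n-1)/2}$: the former by Deligne's Ramanujan--Petersson bound for the weight-$n$ cusp form $f_{\HD}$, the latter by \Cref{thm:Katz}(iii). For $n \in \{3,4\}$ and $p > 29$ one checks $4p^{(n-1)/2} < p^2$, so agreement modulo $p^2$ forces exact equality. Restricting further to $p \equiv 1 \pmod{2M}$ absorbs the sign discrepancy noted in the remark after \Cref{thm:mainK2version}. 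Modularity then follows by constructing $f_{\HD}^\sharp$ from $f_{\HD}$ as in \Cref{thm:main} so that $a_p(f_{\HD}^\sharp) = \tilde b_p$ on the positive-density set $\{p \equiv 1 \pmod{2M}\}$; by Chebotarev density and Brauer--Nesbitt applied to the semisimplifications, $\hat\rho^{\text{prim}}_{\{\HD;1\}}|_{G(2M)} \simeq \rho_{f_{\HD}^\sharp}|_{G(2M)}$, exhibiting $\hat\rho^{\text{prim}}_{\{\HD;1\}}$ as essentially the Deligne representation of $f_{\HD}^\sharp$.

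The main obstacle is the second step: sharpening the CFGL congruence from mod $p$ to mod $p^2$ relies on the supercongruences of \Cref{thm:supercongruences}, whose proof requires a careful residue-sum analysis of the Gross--Koblitz-type and Dwork-type $p$-linear error terms $E_{\mathrm{GK}}$ and $E_{\mathrm{Dwork}}$ described in \Cref{rem:super-pieces}. Checking that these error terms combine to produce exactly the correction $\delta_{\gamma(\HD)=1}\,\psi_{\HD}(p)\,p$ predicted by \Cref{thm:Katz}(iii)---and not some rogue $p$-linear term that would violate the integer match---is the technical heart of the argument.
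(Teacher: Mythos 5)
Your proposal is correct and follows essentially the same route as the paper: \Cref{prop:4.8} is the paper's restatement of the conclusion of \Cref{thm:main}, whose proof in \Cref{sec:hgGalois} runs through exactly your chain --- decomposing the extended Katz representation via \Cref{prop:4.3}, identifying $\tilde b_p$ with the unit root modulo $p^2$ via the Hecke recursion and the CFGL isomorphism, invoking \Cref{thm:supercongruences} to upgrade the mod $p$ congruence, and using integrality plus the Weil--Deligne bounds to force equality (the paper uses the slightly cruder bound $5p^{3/2}+p<p^2$ for $p\ge 29$ where you use $4p^{(n-1)/2}<p^2$), with the restriction to $p\equiv 1\pmod{2M}$ absorbing the residual sign just as in the paper. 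The only cosmetic difference is that you make the Chebotarev/Brauer--Nesbitt step explicit for the final modularity claim, which the paper instead delegates to a remark citing modularity of two-dimensional motivic representations.
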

As was the case in the statement of \Cref{thm:main}, we could consider all primes equivalent to $1$ modulo $M$ with the addition of a sign.  The choice in the above proposition to restrict to $p \equiv 1 \pmod{2M}$ is only made to remove this sign for a more succinct statement.

\section{Supercongruences}\label{sec:super}
\subsection{\texorpdfstring{$p$}{p}-adic background}
This section is dedicated to the proof of \Cref{thm:supercongruences}.  Here we will make ample use of the function $\G_p$ defined in \eqref{eq:p-gamma-defn}.  
Given a multiset $\ba = \left\{r_1, r_2, \hdots, r_n\right\}$ we take the convention
\begin{equation}\label{eq:multiset-addition}
    \boldsymbol{\ba}+a = \left\{r_1+a,r_2+a,\hdots,r_n+a\right\}.
\end{equation}
Throughout the remainder of this section, we will combine the shorthand notations from \eqref{eq:shorthand-quotient} and \eqref{eq:multiset-addition}.  For example, in \eqref{eq:poch-quotient-to-p-gamma} below,
\[
    \G_p\left(\frac{\bbeta+k_0,\ba+k_0+k_1p}{\ba+k_0,\bbeta+k_0+k_1p}\right) = \prod_{j=1}^n \frac{\G_p(q_i+k_0)\G_p(r_i+k_0+k_1p)}{\G_p(r_i+k_0)\G_p(q_i+k_0+k_1p)}.
\]
Additionally, without loss of generality, we assume going forward that $\ba, \bbeta \subset \Q \cap (0,1]$ are ordered so that
\begin{equation*}\label{multi-ordering}
    0 < r_1 \le r_2 \le \hdots \le r_n < 1, \qquad 0 < q_1 \leq q_2 \leq \hdots \leq q_n \leq 1
\end{equation*}
Given some $a \in \Z_p$, we use $0 \leq [a]_0 \leq p-1$ to denote the first $p$-adic digit of $a$.  
Given a hypergeometric datum $\HD = \left\{\ba, \bbeta\right\}$, for each $r_i \in \ba$, $q_i \in \bbeta$ we set
\begin{equation}\label{eq:ai-bi-defn}
    a_i \colonequals [-r_i]_0, \qquad b_i \colonequals [-q_i]_0.
\end{equation}
In particular, every $r_i+a_i$ and $b_i+q_i$ is divisible by $p$, and so the $p$-adic valuation of $\frac{(\ba)_k}{(\bbeta)_k}$ increases by 1 at each $a_i+1$ and decreases at each $b_i+1$.
\begin{lemma}\label{lem:dwork-fixed}
    Let $\HD = \left\{\ba, \bbeta\right\}$, where each element of $\ba$ and $\bbeta$ belongs to $(0,1]\cap\Q$.  Set $M = \mathrm{lcd}(\left\{\ba, \bbeta\right\})$ and let $p \equiv 1 \pmod{M}$.  Then the $p$-adic Dwork dash operation 
    \begin{equation}\label{eq:dwork-dash-defn}
        \cdot ': \Z_p \to \Z_p \qquad r' = \frac{r+[-r]_0}{p}
    \end{equation}
    fixes every element of $\ba$ and $\bbeta$.  Additionally, for each $r_i \in \ba$ and $q_i \in \bbeta$ we have
    \begin{equation}\label{eq:ai-bi-explicit}
        a_i = r_i(p-1) \qquad b_i = q_i(p-1),
    \end{equation} 
    where $a_i$ and $b_i$ are defined as in \eqref{eq:ai-bi-defn}.
\end{lemma}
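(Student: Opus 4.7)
The plan is to prove both statements by a direct elementary calculation once we unwind the definition of $[-r]_0$ for rationals with small denominator. Since both $\ba$ and $\bbeta$ consist of elements of $(0,1]\cap\Q$ with denominator dividing $M$, and $p\equiv 1\pmod M$ ensures $\gcd(p,M)=1$, every $r_i$ and $q_i$ lies in $\Z_p^\times$ (or equals $1$), so each $-r_i$ and $-q_i$ is a well-defined element of $\Z_p$ with a first $p$-adic digit.

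First, I would establish the explicit formula \eqref{eq:ai-bi-explicit}. Fix $r\in(0,1]\cap\Q$ with denominator $d\mid M$, and write $r=a/d$ with $1\le a\le d$. Because $d\mid M\mid p-1$, the quantity $r(p-1)=a(p-1)/d$ is a nonnegative integer, and the bounds $0<a\le d$ together with $(p-1)/d\ge 1$ give $0<r(p-1)\le p-1$, so $r(p-1)\in\{0,1,\dots,p-1\}$. To identify this integer with $[-r]_0$, one just checks the defining congruence: multiplying $r(p-1)=rp-r$ gives $r(p-1)\equiv -r\pmod{p\Z_p}$. Hence $[-r]_0=r(p-1)$, which applied to $r=r_i$ and $r=q_i$ yields $a_i=r_i(p-1)$ and $b_i=q_i(p-1)$.

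With this in hand the first claim is immediate: the Dwork dash of $r$ is
\[
r'=\frac{r+[-r]_0}{p}=\frac{r+r(p-1)}{p}=\frac{rp}{p}=r,
\]
so every $r_i\in\ba$ and every $q_i\in\bbeta$ is fixed by the operation $\cdot'$.

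There is no real obstacle here; the only point to be careful about is the edge case $r=1$, where $r(p-1)=p-1$ is still a legitimate ``first digit'' in $\{0,\dots,p-1\}$, and the congruence $-1\equiv p-1\pmod p$ gives $[-1]_0=p-1$ as needed. A brief remark that the hypothesis $p\equiv 1\pmod M$ is exactly what makes $(p-1)/d$ an integer would make the argument self-contained.
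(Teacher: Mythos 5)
Your proof is correct and follows essentially the same route as the paper's: identify $[-r]_0$ with $r(p-1)$ by checking that it lies in $\{0,\dots,p-1\}$ and satisfies $r+r(p-1)=pr\equiv 0\pmod p$, then conclude $r'=pr/p=r$. Your version is slightly more explicit about why $r(p-1)$ is an integer in the correct range and about the edge case $r=1$, but the argument is the same.
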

\begin{proof}
From the assumptions we know $r_i(p-1)\in\Z\cap [1,p-1]$.  As $r_i+r_i(p-1) = pr_i$ is divisible by $p$, it then follows that $[-r_i]_0=r_i(p-1)$. Thus $r_i'=(r_i+r_i(p-1))/p=r_i$. The argument for the $q_i$'s is the same.
\end{proof}
For any such prime, expanding our index $k$ as $k_0+k_1p$ with $0 \leq k_0 \leq p-1$ we have the following specialization of Corollary 4.2 in \cite{Allen}:
\begin{equation}\label{eq:poch-quotient-to-p-gamma}
    \frac{(\ba)_{k_0+k_1p}}{(\bbeta)_{k_0+k_1p}} = \frac{(\ba)_{k_0}}{(\bbeta)_{k_0}}\frac{(\ba)_{k_1}}{(\bbeta)_{k_1}}  \Lambda_{\ba, \bbeta}(k_0+k_1p) \G_p\left(\frac{\bbeta+k_0,\ba+k_0+k_1p}{\ba+k_0,\bbeta+k_0+k_1p}\right)
\end{equation}
where, with $\nu(k_0,x)$ defined by
\begin{equation}\label{eq:nu-defn}
    \nu(k_0,x) = -\biggr\lfloor \frac{x-k_0}{p-1} \biggr\rfloor = \begin{cases} 0 & \text{if } k_0 \leq x, \\ 1 & \text{if } x < k_0 < p, \end{cases}
\end{equation}
we set
\begin{equation*}\label{eq:Lambda-defn}
    \Lambda_{\ba, \bbeta}(k_0+k_1p) \colonequals \prod_{j=1}^n \left(1+\frac{k_1}{r_j}\right)^{\nu(k_0,a_j)}\left(1+\frac{k_1}{q_j}\right)^{-\nu(k_0,b_j)}. 
\end{equation*}
This $\Lambda$ term corresponds to the discrepancy between the functional equations of $\G$ and $\G_p$.
\begin{remark}\label{rem:lowest-weight}
    The hypotheses in \Cref{thm:supercongruences} that $q_i > r_{i+2}$ for all $i$ and that at least two elements of $\bbeta$ are $1$ are equivalent to assuming that 
    the $p$-adic valuations of the hypergeometric coefficients $\hypcoeff{k}$ with $a_2+1 \leq k \leq p-1$ will be divisible by $p^2$, as is illustrated in \Cref{fig:p-slope}.
\end{remark}
\begin{figure}[ht]
    \centering    \includegraphics[height=5cm]{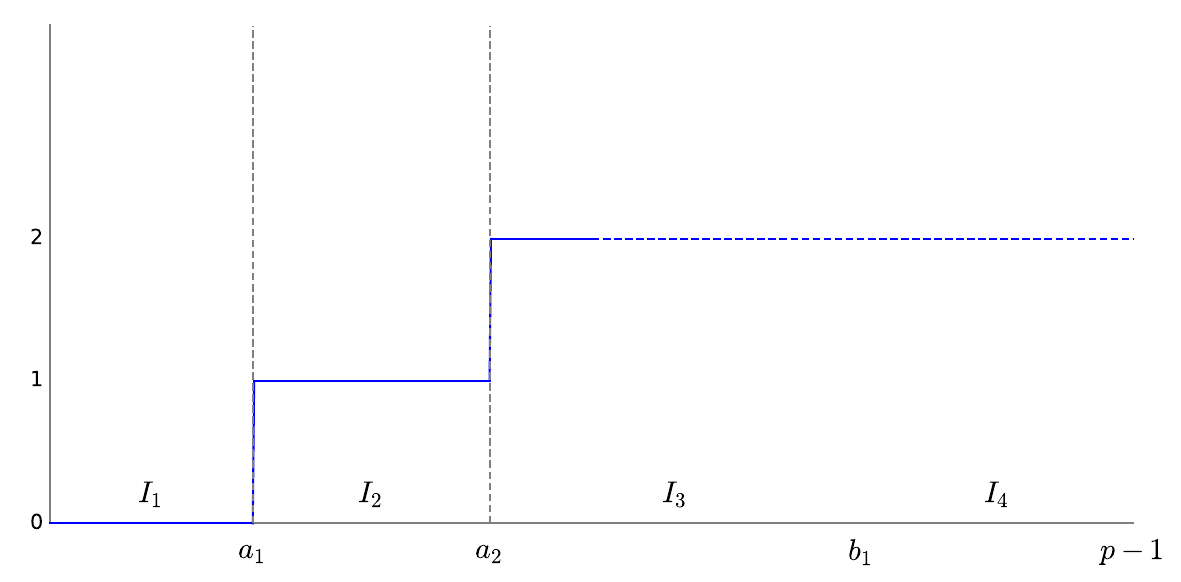}
    \caption{A lower bound on the $p$-adic valuation of $\frac{(\ba)_k}{(\bbeta)_k}$ for $k$ ranging from $0$ to $p-1$ and for the hypergeometric data appearing in \Cref{thm:supercongruences}.  The valuations on the intervals labeled $I_1$ and $I_2$ are exactly $0$ and $1$, whereas on the intervals $I_3$ and $I_4$ we only know that the valuations are greater than or equal to $2$.}
    \label{fig:p-slope}
\end{figure}

As we aim to establish congruence modulo $p^2$ we will only need to consider this $\Lambda$ for $k_0 \leq a_2$.  On this interval, we have
\begin{equation}\label{eq:Lambda-simple}
    \Lambda_{\ba, \bbeta}(k_0+k_1p) = 
    \begin{cases}
        1 & 0 \leq k_0 \leq a_1; \\
        1+\frac{k_1}{r_1} & a_1 < k_0 \leq a_2.
    \end{cases}
\end{equation}
From the definition of $\G_p$ in \eqref{eq:p-gamma-defn}, we immediately conclude the following translation between the Pochhammer symbol and $\G_p$ for all $0 \leq a \leq p-1$:
\begin{equation}\label{eq:poch-p-gamma}
    (t)_a = (-1)^a\frac{\G_p(t+a)}{\G_p(t)}(t+[-t]_0)^{\nu(a, [-t]_0)}.
\end{equation}
We now define a function $G_1(x):\Z_p\rightarrow\Z_p$ by
\begin{equation*}\label{eq:G1-defn}
    G_1(x)\colonequals \frac{\G_p'(x)}{\G_p(x)}.
\end{equation*} 
Taking logarithmic derivatives of both sides of \eqref{eq:poch-p-gamma} yields 
\begin{equation}\label{eq:poch-diff-G1}
    \frac{\frac{d}{dt}(t)_a}{(t)_a} = G_1(t+a) - G_1(t) + \frac{\nu(a,[-t]_0)}{t+[-t]_0}.
\end{equation}
We can also logarithmically differentiate the reflection identity \eqref{eq:p-gamma-reflect} to obtain
\begin{equation}\label{eq:G1-reflect}
    G_1(a) = G_1(1-a) \quad a \in \Z_p.
\end{equation}
It will now be useful for us to generalize the function $G_1$ in analogy with our notation in \eqref{eq:shorthand-quotient}.  Specifically, we set
\begin{equation*}\label{eq:G1-general-defn}
    G_1(\boldsymbol{\ba}+a, \boldsymbol{\bbeta}+a) \colonequals \mathrm{logder}\left(\G_p\left(\frac{\ba+a}{\bbeta+a}\right)\right) = \sum_{j=1}^n G_1(r_i+a)-G_1(q_i+a).
\end{equation*}  
We note that $ G_1(\boldsymbol{\ba}+a, \boldsymbol{\bbeta}+a)$ agrees with the definition of the function $J_1(\ba, \bbeta; a)$ in \cite{LTYZ} and in \cite{Allen}. 
The following  
result allows us to expand our hypergeometric functions $p$-adically:
\begin{theorem}[Long--Ramakrishna, \cite{LR}]\label{thm:p-gamma-approx}
    For $p \geq 5$, $r \in \N$ and $a, m \in \Z_p$,  we have
    \[
        \frac{\Gamma_p(a+mp^r)}{\Gamma_p(a)} \equiv 1+G_1(a)mp^r \pmod{p^{2r}}.
    \]
\end{theorem}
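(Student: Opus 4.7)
The plan is to read the statement as the first-order $p$-adic Taylor approximation of $\Gamma_p$ at $a$, and to control the remainder using integrality of the higher normalized derivatives. The essential inputs are that $\Gamma_p$ is locally analytic on $\Z_p$ and that $\Gamma_p(a) \in \Z_p^{\times}$ for every $a \in \Z_p$; the latter is immediate from \eqref{eq:p-gamma-defn}, since no factor in the defining product is divisible by $p$, and extends to all $a \in \Z_p$ by continuity. In particular, for $x \in p\Z_p$ the ratio $\Gamma_p(a+x)/\Gamma_p(a)$ lies in $1 + p\Z_p$, so $\log_p$ is defined on it.

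The first step is to produce the log-expansion
\[
\log_p \frac{\Gamma_p(a+x)}{\Gamma_p(a)} \;=\; \int_0^x G_1(a+t)\, dt \;=\; \sum_{k \ge 1} \frac{G_1^{(k-1)}(a)}{k!}\, x^k,
\]
valid on $p\Z_p$; this uses the local analyticity of $G_1 = \Gamma_p'/\Gamma_p$ at $a$. The technical heart of the argument is then to show that each coefficient $G_1^{(k-1)}(a)/k!$ lies in $\Z_p$. The approach I would take is to differentiate the defining product \eqref{eq:p-gamma-defn} $k$ times, use the shifting property \eqref{eq:p-gamma-shift} to reduce to $a$ in a fundamental domain, and compare $v_p$ of the resulting expressions against the classical estimate $v_p(k!) \le (k-1)/(p-1)$, which for $p \ge 5$ absorbs the $k!$ in the denominator. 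This is where the hypothesis $p \ge 5$ is essential; for $p \in \{2,3\}$ small denominators can survive.

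Granted integrality, substituting $x = mp^r$ makes every $k \ge 2$ term in the log-expansion have $p$-adic valuation at least $kr \ge 2r$, and so
\[
\log_p \frac{\Gamma_p(a+mp^r)}{\Gamma_p(a)} \;\equiv\; G_1(a)\, m p^r \pmod{p^{2r}}.
\]
Exponentiating via $\exp_p$, with $y = G_1(a) m p^r \in p^r \Z_p$ so that $\exp_p(y) \equiv 1 + y \pmod{p^{2r}}$ (the higher terms $y^k/k!$ have valuation $\ge kr - v_p(k!) \ge 2r$ for $p \ge 5$), produces the stated congruence. The main obstacle is the integrality of the normalized higher derivatives $G_1^{(k-1)}(a)/k!$; once that is in place, everything else is bookkeeping with $p$-adic valuations. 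An alternative route, which sidesteps manipulating derivatives of a product of shifted factors, is to appeal to a Volkenborn-integral representation of $\log_p \Gamma_p$ and read off the integrality of the series coefficients directly; this is essentially the route in \cite{LR}.
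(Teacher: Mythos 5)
You should first note that the paper offers no proof of \Cref{thm:p-gamma-approx} for you to be measured against: it is imported verbatim from Long--Ramakrishna \cite{LR} and used as a black box. Judged on its own terms, your architecture --- pass to $\log_p$ of the ratio, Taylor-expand at $a$, observe that for $x=mp^r$ every term with $k\ge 2$ has valuation at least $kr - v_p(k!)\ge 2r$ once $p\ge 5$, then exponentiate --- is the standard one and is essentially how the result is proved in \cite{LR}. Your tail estimates are correct: for $2\le k\le p-1$ one has $v_p(k!)=0$, and for $k\ge p$ the bound $v_p(k!)\le (k-1)/(p-1)$ together with $r\ge 1$ and $p\ge 5$ gives $kr-v_p(k!)\ge 2r$; the same bookkeeping justifies the final $\exp_p$ step.

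The gap is exactly where you place it, but it is a genuine gap rather than a routine verification you may defer. The entire content of the theorem is the $p$-integrality of the local expansion coefficients of $\log_p\Gamma_p$ at an arbitrary $a\in\Z_p$ --- that $G_1(a)\in\Z_p$ and that $G_1^{(k-1)}(a)$ (a fortiori $G_1^{(k-1)}(a)/k!$) is $p$-integral. The one concrete route you propose, ``differentiate the defining product \eqref{eq:p-gamma-defn} $k$ times,'' does not get off the ground: \eqref{eq:p-gamma-defn} defines $\Gamma_p$ only at positive integers and is not a differentiable expression, so $\Gamma_p^{(k)}(a)$ only makes sense after one has established that $\Gamma_p$ is locally analytic, and controlling the valuations of its local power-series coefficients is precisely the hard step (in \cite{LR} this is a separate theorem on the integrality of $G_k(a)=\Gamma_p^{(k)}(a)/\Gamma_p(a)$ for $p\ge 5$, resting on the known analytic expansion of $\Gamma_p$, not on manipulating the product formula). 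There is also a circularity in the phrase ``$v_p(k!)\le (k-1)/(p-1)$ absorbs the $k!$ in the denominator'': to absorb anything you first need a lower bound on $v_p(G_1^{(k-1)}(a))$, which is the unproven input. Your closing remark that the Volkenborn-integral representation ``is essentially the route in \cite{LR}'' is accurate, but invoking it reduces the theorem to the very source being quoted. In short: correct skeleton, correct identification of where $p\ge 5$ enters the tail estimates, but the load-bearing integrality lemma is asserted rather than proven, and the one method you sketch for it would fail as stated.
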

As an immediate consequence, we can evaluate the product modulo $p^2$ of the $\Gamma_p$ quotient in \eqref{eq:poch-quotient-to-p-gamma} as we run over all elements of $\ba$ and $\bbeta$.
\begin{corollary}
    For multi-sets $\ba = \left\{r_1, r_2, \hdots, r_n\right\}$, $\bbeta = \left\{q_1, q_2, \hdots q_n\right\}$, and fixed $0 \leq k_0 \leq p-1$, and $k_1 \in \Z_p$, we have
    \begin{equation}\label{eq:p-gamma-expand}
        \Gamma_p\left( \frac{\bbeta+k_0, \ba+k_0+k_1p}{\ba+k_0, \bbeta+k_0+k_1p}\right) \equiv 1+ G_1(\ba+k_0, \bbeta+k_0)k_1 p \pmod{p^2}.
    \end{equation}
\end{corollary}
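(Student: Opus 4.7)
The plan is to deduce this corollary as a direct multiplicative consequence of Theorem 5.3 (Long--Ramakrishna), applied termwise. For each $r_i \in \ba$ we apply Theorem 5.3 with $r=1$, $a = r_i + k_0$, and $m = k_1$, which is valid since $r_i + k_0 \in \Z_p$, to obtain
\[
    \frac{\G_p(r_i + k_0 + k_1 p)}{\G_p(r_i + k_0)} \equiv 1 + G_1(r_i + k_0)\, k_1 p \pmod{p^2},
\]
and analogously for each $q_i \in \bbeta$. Inverting the $q_i$-relation modulo $p^2$ gives
\[
    \frac{\G_p(q_i + k_0)}{\G_p(q_i + k_0 + k_1 p)} \equiv 1 - G_1(q_i + k_0)\, k_1 p \pmod{p^2},
\]
using that $(1 + xp)^{-1} \equiv 1 - xp \pmod{p^2}$ for $x \in \Z_p$.

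It then remains to multiply these $2n$ congruences together. Since each factor is $1 + O(p)$, when expanding the product modulo $p^2$ only the constant and $p$-linear terms survive; all cross terms contribute at $p^2$ or higher. Thus
\[
    \prod_{i=1}^{n} \frac{\G_p(q_i+k_0)\G_p(r_i+k_0+k_1p)}{\G_p(r_i+k_0)\G_p(q_i+k_0+k_1p)} \equiv 1 + k_1 p \sum_{i=1}^n \bigl(G_1(r_i+k_0) - G_1(q_i+k_0)\bigr) \pmod{p^2},
\]
and by the definition of $G_1(\ba+k_0,\bbeta+k_0)$ as the logarithmic derivative sum, the right-hand side is precisely $1 + G_1(\ba+k_0,\bbeta+k_0)\, k_1 p$, as claimed. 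There is no substantive obstacle here; the only points requiring care are verifying that the quantities lie in $\Z_p$ so that Theorem 5.3 applies, and tracking signs when inverting the $q_i$-factors. The proof is essentially a one-step unfolding of Theorem 5.3 combined with a first-order truncation of a product of units in $\Z_p$.
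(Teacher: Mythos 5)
Your proof is correct and is exactly the argument the paper intends: the paper states this corollary as an immediate consequence of \Cref{thm:p-gamma-approx} without writing out the details, and your termwise application with $r=1$, the inversion of the $q_i$-factors, and the first-order truncation of the product of units is precisely that omitted computation.
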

Additionally, logarithmically differentiating \Cref{thm:p-gamma-approx} gives the congruence
\begin{equation}\label{eq:G1-p-shift}
    G_1(a+mp^r) \equiv G_1(a) \pmod{p^{r}}.
\end{equation} 
\subsection{Dwork type congruences}\label{ss:Dwork}
\subsubsection{The general case}
We use the shorthand notation $F_{s}(\ba,\bbeta;\l)$ for the truncation of the classical hypergeometric series after $p^{s}$ terms evaluated at $z=\l$.  The discussion below originated in Dwork's paper \cite{Dwork}, in which $\bbeta=\{1,\cdots,1\}$.  Dwork showed that when $F_1(\ba,\bbeta,\l)\neq 0\pmod p$, there exists a $p$-adic unit root $\mu_{\ba,\bbeta,\l,p}$ such that for each $s\ge 0$  
\begin{equation}\label{eq:Dwork-unit-root-congruence}
    F_{s+1}(\ba,\bbeta;\l)/F_s(\ba',\bbeta';\l^p)\equiv \mu_{\ba,\bbeta,\l,p}\pmod{p^{s+1}},
\end{equation}
where $\ba'=\{r_1',\cdots,r_n'\}$ is the image of $\ba$ under the Dwork dash operation \eqref{eq:dwork-dash-defn} and $\bbeta'$ is defined similarly.  See more recent discussions of Dwork crystals by Beukers and Vlasenko in \cite{BVI, BVII}.  We also note the similarity between \eqref{eq:Dwork-unit-root-congruence} and \eqref{eq:u_f}.  To extend Dwork's work to more general $\bbeta$, we add the assumption of $p\equiv 1\pmod{M(\HD)}$ so that each $r_i$ and $q_i$ is invariant under the Dwork dash operation by \Cref{lem:dwork-fixed}.  The proofs in this section rely heavily on the assumption $q_i > r_{i+2}$ for all $1 \leq i \leq n-2$, as this dictates the $p$-adic valuations of the hypergeometric coefficients as documented in \Cref{rem:lowest-weight} and \Cref{fig:p-slope}. 
\begin{example}\label{eg:LC-mu}
    Let $\HD=\HD_2$ as before.  For each odd prime $p$ and $\l\in \Z_p\setminus\{0,1\}$ such that $a_p(\l)=H_p(\HD_2;\l)\not\equiv 0\pmod p$, $\mu_{\HD_2,\l,p}$ is the root of $T^2-a_p(\l)T+p$ in $\Z_p$ that is congruent to $a_p(\l) \pmod p$. 
\end{example}\par
We now turn our attention towards the Dwork type congruences appearing in \Cref{rem:super-pieces}.
\begin{lemma}\label{lem:Dwork-congruence}
    Let $\ba = \left\{r_1, r_2, \hdots, r_n\right\}$ and $\bbeta = \left\{q_1, q_2, \hdots, q_{n-2}, 1, 1\right\}$ form a primitive hypergeometric datum $\HD$ with $n \geq 2$ such that $q_i > r_{i+2}$ for each $1 \leq i \leq n-2$.  Let $M=M(\HD)$ and  $p \equiv 1 \pmod{M}$.  Then for each $\l\in\Z_p$
    \[
        F_{s+1}(\ba, \bbeta; \l) - p \l^p F_s'(\ba, \bbeta; \l^{p}) E_{\mathrm{Dwork}}(\ba, \bbeta; \l) \equiv F_s (\ba, \bbeta; \l^{p}) F_1(\ba, \bbeta; \l) \pmod{p^2},
    \] 
    where here $F'$ denotes the derivative, not the Dwork dash operation, and
    \begin{singnumalign}\label{eq:EDwork}
        E_{\mathrm{Dwork}}(\ba, \bbeta; \l) \colonequals& \sum_{k=0}^{a_1} \hypcoeff{k} \l^{k} \left(G_1(\ba+k, \bbeta+k)\right) + \sum_{k=a_1+1}^{a_2} \hypcoeff{k} \frac{\l^{k}}{pr_1},
    \end{singnumalign} 
    where $a_i$ is defined as in \eqref{eq:ai-bi-defn}.
\end{lemma}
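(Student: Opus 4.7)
The plan is to establish this congruence by carefully expanding $F_{s+1}(\ba, \bbeta; \l)$ through the $p$-adic digit decomposition and using the approximations \eqref{eq:poch-quotient-to-p-gamma}, \eqref{eq:Lambda-simple}, and \eqref{eq:p-gamma-expand}. First I would write every index $0 \le k \le p^{s+1}-1$ as $k = k_0 + k_1 p$ with $0 \le k_0 \le p-1$ and $0 \le k_1 \le p^s - 1$, giving
\[
    F_{s+1}(\ba, \bbeta; \l) = \sum_{k_0=0}^{p-1} \sum_{k_1 = 0}^{p^s-1} \hypcoeff{k_0 + k_1 p} \l^{k_0+k_1p}.
\]
By the valuation analysis of \Cref{rem:lowest-weight} (which uses the hypothesis $q_i > r_{i+2}$ in an essential way), the coefficient $\hypcoeff{k_0+k_1p}$ has $v_p \ge 2$ for every $a_2 < k_0 \le p-1$, so this portion of the sum vanishes modulo $p^2$ and the $k_0$-range may be truncated to $0 \le k_0 \le a_2$.

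In this surviving range, applying \eqref{eq:poch-quotient-to-p-gamma} together with the simplified form of $\Lambda_{\ba,\bbeta}$ from \eqref{eq:Lambda-simple} and the $\G_p$ approximation \eqref{eq:p-gamma-expand} yields, modulo $p^2$,
\[
    \hypcoeff{k_0+k_1p} \equiv \hypcoeff{k_0}\hypcoeff{k_1}\cdot \Lambda_{\ba,\bbeta}(k_0+k_1 p) \cdot \bigl(1+G_1(\ba+k_0,\bbeta+k_0)\,k_1 p\bigr).
\]
The crucial bookkeeping observation is that $\hypcoeff{k_0}$ is a $p$-adic unit for $0 \le k_0 \le a_1$ but has $v_p = 1$ exactly for $a_1 < k_0 \le a_2$ (again from the valuation profile in \Cref{fig:p-slope}). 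Hence, for $a_1 < k_0 \le a_2$, the factor $G_1\, k_1 p$ and the cross term $(k_1/r_1)(G_1\, k_1 p)$ both contribute valuation at least $2$ and may be discarded, leaving only $1 + k_1/r_1$; for $0 \le k_0 \le a_1$, $\Lambda$ is identically $1$ and the multiplier collapses to $1 + G_1(\ba+k_0,\bbeta+k_0)\,k_1 p$.

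I would then separate the resulting expression into a main term plus two corrections and identify each piece. The main term
\[
    \sum_{k_0=0}^{a_2}\sum_{k_1=0}^{p^s-1}\hypcoeff{k_0}\hypcoeff{k_1}\l^{k_0+k_1p} \equiv F_1(\ba,\bbeta;\l)\,F_s(\ba,\bbeta;\l^p) \pmod{p^2}
\]
(after restoring the harmless $a_2 < k_0 \le p-1$ tail) recovers the right-hand side. Using the identity $\sum_{k_1} k_1 \hypcoeff{k_1}\l^{k_1 p} = \l^p F_s'(\ba,\bbeta;\l^p)$, the remaining corrections coming from $p\,G_1\, k_1$ for $k_0 \le a_1$ and from $k_1/r_1$ for $a_1 < k_0 \le a_2$ assemble into $p\l^p F_s'(\ba,\bbeta;\l^p)\,E_{\mathrm{Dwork}}(\ba,\bbeta;\l)$, where the $1/p$ in the second sum of \eqref{eq:EDwork} compensates for the absence of an explicit $p$ multiplier in the $\Lambda$ correction, supplied implicitly by $v_p(\hypcoeff{k_0}) = 1$ in that range.

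The main obstacle, and essentially the only subtlety, is keeping track of which products can be dropped modulo $p^2$: one must carefully justify the truncation at $a_2$, the vanishing of the $G_1$ correction for $k_0 > a_1$, and the restoration of the full $k_0$-range in the main term, all of which rely on the precise valuation profile guaranteed by the hypothesis $q_i > r_{i+2}$. Once this bookkeeping is in hand, the identification of the correction with $p \l^p F_s'(\ba,\bbeta;\l^p) \cdot E_{\mathrm{Dwork}}(\ba,\bbeta;\l)$ follows directly from the definitions.
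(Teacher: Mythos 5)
Your proposal is correct and follows essentially the same route as the paper's proof: the same digit decomposition $k=k_0+k_1p$, the same use of \eqref{eq:poch-quotient-to-p-gamma}, \eqref{eq:Lambda-simple}, and \eqref{eq:p-gamma-expand}, the same truncation of the $k_0$-range at $a_2$ via the valuation profile, and the same identification of the correction terms with $p\l^p F_s'(\ba,\bbeta;\l^p)E_{\mathrm{Dwork}}(\ba,\bbeta;\l)$. The bookkeeping points you flag (dropping the $G_1$ correction for $a_1<k_0\le a_2$ because $\hypcoeff{k_0}$ already has valuation one there, and restoring the tail of the $k_0$-sum) are exactly the steps the paper carries out.
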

Under our assumptions, $E_{\mathrm{Dwork}}(\ba, \bbeta; \l) \in\Z_p$.  If  $F_1(\ba, \bbeta; \l) \not\equiv 0 \pmod{p}$,  meaning the ordinary case, then  $\l^p F'_s(\ba, \bbeta; \l^p) \in \Z_p$.  To show the desired mod $p^2$ supercongruences, we need to show that the $p$-linear term $F_1'(\ba,\bbeta;\l)E_{\mathrm{Dwork}}(\ba, \bbeta; \lambda)$ vanishes modulo $p$.  Our definition of $E_{\mathrm{Dwork}}$ is made because this term will always vanish for $\HD$ as in \Cref{thm:supercongruences} with $\gamma(\HD) \leq 1$ and $\lambda=1$.  In this case, we have the following:
\begin{remark}\label{rem:role-of-EDwork}
   Assume $\l\in\Z_p$ and $p$ is ordinary. 
   Then $F_1(\ba,\bbeta;\l)\equiv \mu_{HD,\l,p}\pmod{p^2}$ where $\mu_{HD,\l,p}$ is the unit root as in \eqref{eq:Dwork-unit-root-congruence} if $E_{\mathrm{Dwork}}(\ba, \bbeta; \l) \equiv 0\pmod{p}$.  
\end{remark}

We are now ready to prove \Cref{lem:Dwork-congruence}. 

\begin{proof}[Proof of \Cref{lem:Dwork-congruence}]
    We decompose our index of summation by $k = k_0+k_1p$ with $0 \leq k_0 \leq p-1$. 
    We find
    \begin{align*}
        F_{s+1}(\ba, \bbeta; \l) &= \sum_{k_1=0}^{p^s-1} \sum_{k_0=0}^{p-1} \frac{(\ba)_{k_0+k_1p}}{(\bbeta)_{k_0+k_1p}} \l^{k_0+k_1p} \\
        &\hspace{-2mm} \stackrel{\eqref{eq:poch-quotient-to-p-gamma}}{=} \sum_{k_1=0}^{p^s-1} \frac{(\ba)_{k_1}}{(\bbeta)_{k_1}} \l^{k_1 p} \sum_{k_0=0}^{p-1} \frac{(\ba)_{k_0}}{(\bbeta)_{k_0}} \l^{k_0} \Lambda_{\ba, \bbeta}(k_0+k_1p) \Gamma_p\left( \frac{\boldsymbol{\alpha}+k_0+k_1p, \boldsymbol{\bbeta}+k_1}{\boldsymbol{\alpha}+k_0, \boldsymbol{\bbeta}+k_0+k_1p}\right) \\
        & \hspace{-2mm} \stackrel{\eqref{eq:p-gamma-expand}}{\equiv} \sum_{k_1=0}^{p^s-1} \frac{(\ba)_{k_1}}{(\bbeta)_{k_1}} \l^{k_1 p} \sum_{k_0=0}^{p-1} \frac{(\ba)_{k_0}}{(\bbeta)_{k_0}} \l^{k_0} \\ 
        &\hspace{25mm} \times \left(1+G_1(\boldsymbol{\alpha}+k_0, \boldsymbol{\bbeta}+k_0)k_1p\right) \Lambda_{\ba, \bbeta}(k_0+k_1p) \pmod{p^2}. 
    \end{align*}
    As noted in \Cref{rem:lowest-weight} the hypergeometric coefficients $\hypcoeff{k}$ are always zero modulo $p^2$ for $k \geq a_2+1$.  Using this as well as \eqref{eq:Lambda-simple} then gives
    \begin{align*}
        F_{s+1}(\ba, \bbeta; \l) & \equiv \sum_{k_1=0}^{p^{s}-1} \frac{(\ba)_{k_1}}{(\bbeta)_{k_1}} \l^{k_1 p} \biggr[ \sum_{k_0=0}^{a_1} \frac{(\ba)_{k_0}}{(\bbeta)_{k_0}} \l^{k_0} \left(1 + G_1(\boldsymbol{\alpha}+k_0, \boldsymbol{\bbeta}+k_0)k_1p\right) \\
        &\hspace{30mm} + \sum_{k_0=a_1+1}^{a_2} \frac{(\ba)_{k_0}}{(\bbeta)_{k_0}} \l^{k_0} \left(1+\frac{k_1}{r_1}\right) \biggr] \pmod{p^2} \\&\equiv F_s(\ba, \bbeta; \l^p) F_1(\ba, \bbeta; \l) + p\l^p F_s'(\ba,\bbeta;\l^p) E_{\mathrm{Dwork}}(\ba, \bbeta; \l) \pmod{p^2}.
      \end{align*} 
\end{proof}
\subsubsection{The \texorpdfstring{$\l=1$}{lambda = 1} case}
We will adopt a strategy used in \cite{Allen,LTYZ,OSZ} by constructing a rational function $R(t)\in \Q(t)$---depending on $p$---such that the sum of the residues of $R(t)$ is congruent modulo $p$ to $E_{\mathrm{Dwork}}(\ba, \bbeta; 1)$.  Thus, we will be able to translate the corresponding residue sum identity into our desired congruence $E_{\mathrm{Dwork}}(\ba, \bbeta; 1) \equiv 0 \pmod{p}$.  Examining $E_{\mathrm{Dwork}}(\ba, \bbeta; 1)$ in \eqref{eq:EDwork} more closely, the desired rational function will satisfy the following  conditions when we consider its reduction $\bar R(t) \in \F_p(t)$:
\begin{enumerate}[label=(\arabic*)]
    \item For each $k\in\F_p$, $R(t)$ has a pole at $t=-k$.  If $n_k$ is the order of this pole, then $(R(t)(t+k)^{n_k})|_{t=-k}\equiv C \frac{(\ba)_k}{(\bbeta)_k}\pmod p$ for some constant $C$ independent of $k$.
    \item The order $n_k$ is equal to $2$ for $k \in [0,\hdots, a_1]$ and $1$ for $k \in [a_1+1,\hdots,a_2]$.  This will lead to the appearance of $G_1$ in the first sum defining $E_{\mathrm{Dwork}}$. 
\end{enumerate}
Additionally, we wish to balance the degrees of the numerator and denominator of $R(t)$ so that the residue at infinity is zero for $\gamma(\HD) < 1$ and becomes nonzero exactly when $\gamma(\HD) = 1$, as this will lead to the appearance of the $p$-linear term on the left-hand side of \eqref{eq:super-combined}.  The choice of such an $R(t)$ is not unique, but each choice will give the same first $p$-adic digit for the $E_{\mathrm{Dwork}}$ error term. 

We now define a  rational function that can be used to handle all cases listed in Lemma \ref{lem:Dwork-congruence}. We will need to track the number of elements of $\bbeta$ which are not equal to $1$, and so first we make the following definition:
\begin{defn}\label{defn:n-hat}
    For a fixed $\HD = \left\{\ba, \bbeta\right\}$ of length $n$, let $\hat{n}$ be the largest index $2 \leq \hat{n} \leq n$ such that $q_{\hat{n}} < 1$, or $0$ if no such index exists.
\end{defn}
We define a rational function by
\begin{equation}\label{eq:Rt-defn}
    R(t) = \frac{\prod\limits_{i=2}^n (t+1-r_i-p)_{p-a_i-1}}{(t)_{a_1+1}^2(t+a_1+1)_{b_1-a_1}(t+b_1+1)_{p-b_1-1}^2\prod\limits_{i=2}^{\hat{n}} (-t+1+ip)_{p-b_i-1}}.
\end{equation} 
We note that $R(t)$ is fully reduced, as the denominator has roots only at integers while the numerator only has roots at integer shifts of the $r_i$, which are non-integral. 
Our broad strategy will be to consider the sum of the residues of $R(t)$ two ways---over $\mathbb{C}$ we will use standard residue calculus to show that the sum of the residues is zero if and only if $\gamma(\HD) \le 1$, while $p$-adically we will show that the difference between this residue sum and $E_{\mathrm{Dwork}}(\ba, \bbeta; 1)$ will be small.  Together, this will yield our desired congruence $E_{\mathrm{Dwork}}(\ba, \bbeta; 1) \equiv 0 \pmod{p}$.  Our particular choice of $R(t)$ in \eqref{eq:Rt-defn} is made to make both sides of this calculation as simple as possible.  For example, the shifts of the Pochhammer symbols by $ip$ in the denominator are not strictly necessary; we make this choice as it guarantees that the poles at each $t = k+ip$ with $2 \leq i \leq n$ and $1 \leq k \leq p-b_i-1$ are simple.  The perturbation in the numerator by $-r_i-p$ is made so that the residues of these simple poles will be divisible by $p$.  The function $R(t)$ has partial fraction decomposition
\begin{singnumalign}\label{eq:R-partial}
    R(t) =& \sum_{k=0}^{a_1} \left(\frac{A_k}{(t+k)^2} + \frac{B_k}{t+k} \right) + \sum_{k=a_1+1}^{b_1} \frac{B_k}{t+k} + \sum_{k=b_1+1}^{p-1} \left(\frac{A_k}{(t+k)^2} + \frac{B_k}{t+k} \right) \\
    &+ \sum_{i=2}^{\hat{n}} \sum_{k=1}^{p-b_i-1} \frac{C_{i,k}}{-t+k+ip}.
\end{singnumalign}
Going forward, we will separate the range $[0,\hdots,p-1]$ into four intervals, as illustrated in \Cref{fig:p-slope}.  Namely, we set
\begin{singnumalign}\label{eq:interval-defn}
    I_1 &= [0,\hdots,a_1], &I_2 &= [a_1+1,\hdots,a_2] \\
    I_3 &= [a_2+1, \hdots, b_1], &I_4 &= [b_1+1,\hdots,p-1].
\end{singnumalign}
We will eventually relate this decomposition to $E_{\mathrm{Dwork}}(\ba, \bbeta; 1)$ by showing that the sum of the residues $B_k$ over $I_1$ and $I_2$ each match to the corresponding sums in the definition of $E_{\mathrm{Dwork}}$.  Before doing so, we show that the remaining residues vanish modulo $p$ and so will not contribute $p$-adically.
\begin{lemma}\label{lem:beta-hat-residues}
    Let $\HD, M$, and $p$ be as in \Cref{lem:Dwork-congruence}, with the additional assumption that $p > \hat{n}+1$ where $\hat{n}$ is defined as in \Cref{defn:n-hat}.  With $C_{i,k}$ as in \eqref{eq:R-partial} and for all $2 \leq i \leq \hat{n}$ and $1 \leq k \leq p-b_i-1$ we have
    \[
        C_{i,k} \equiv 0 \pmod{p}.
    \]
\end{lemma}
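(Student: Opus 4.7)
The plan is to compute $C_{i,k}$ as a residue, track $p$-adic valuations of each factor, and exploit the strict inequality $q_{l^*} > r_{l^*+2}$ to force the numerator to contribute more factors of $p$ than the denominator.

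Since the pole of $R(t)$ at $t = k+ip$ arises from a simple zero of $(-t+1+ip)_{p-b_i-1}$, I would write $(-t+1+ip)_{p-b_i-1} = (-t+k+ip)\cdot g(t)$ with $g(t) = \prod_{j'=0,\,j'\ne k-1}^{p-b_i-2}(-t+1+ip+j')$, and let $\tilde D(t)$ denote the full denominator of $R(t)$ with this Pochhammer removed. Extracting the residue then yields $C_{i,k} = N(k+ip)/(g(k+ip)\,\tilde D(k+ip))$, where $N(t)$ is the numerator of $R(t)$. The values of $g(k+ip) = \prod_{j'\ne k-1}(1-k+j')$ are nonzero integers of absolute value less than $p$, so $v_p(g(k+ip))=0$.

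Define $j^* := \max\{j \in [1,n] : a_j \le p-1-k\}$ and $l^* := \max\{l \in [1,n] : b_l \le p-1-k\}$. Since $\ba, \bbeta$ are sorted, these set-theoretic definitions produce initial segments. Since $b_l = p-1$ for all $l > \hat{n}$ (which would force $k \le 0$), one has $l^* \le \hat{n}$; from $k \le p-b_i-1$ one has $i \le l^*$. To evaluate $N(k+ip)$ $p$-adically, use $r_j = a_j/(p-1)$, so $r_j \equiv -a_j \pmod p$. The $j'$-th factor of $(t+1-r_j-p)_{p-a_j-1}$ at $t=k+ip$ reduces mod $p$ to $k+1+a_j+j'$, vanishing exactly once at $j' = p-k-1-a_j$ when this index lies in $[0,p-a_j-2]$, i.e.\ precisely when $a_j \le p-1-k$. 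At this $j'$ the factor equals $p(i-r_j)$; since $i-r_j \equiv i+a_j \pmod p$ is a unit in $\Z_p$ whenever $a_j \ne p-i$ (the non-generic case only increases $v_p$), each $j \in [2, j^*]$ contributes at least one factor of $p$, giving $v_p(N(k+ip)) \ge j^* - 1$.

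For the denominator, the chain $r_1 \le r_3 < q_1 \le q_i$ implies $a_1 < b_i$, so $k+a_1 \le p-2$ and $k+b_1 \le p-1$; hence $(t)_{a_1+1}^2(t+a_1+1)_{b_1-a_1}$ evaluated at $t=k+ip$ is a unit mod $p$. The factor $(t+b_1+1)_{p-b_1-1}^2|_{t=k+ip}$ contains the single divisible-by-$p$ term $(i+1)p$ with $v_p=1$ (using $p > \hat{n}+1$ to secure $v_p(i+1)=0$); squared, this contributes valuation $2$. For each $l \in [2,\hat{n}]\setminus\{i\}$, the Pochhammer $f_l(k+ip) = (1-k+(l-i)p)_{p-b_l-1}$ contains the factor $(l-i)p$ precisely when $k-1 \in [0,p-b_l-2]$, i.e., when $l \le l^*$; here $v_p((l-i)p)=1$ since $|l-i| \le \hat{n}-2 < p$. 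Since $i \le l^*$, there are exactly $l^*-2$ such $l$, yielding $v_p(\tilde D(k+ip)) = 2 + (l^*-2) = l^*$.

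The conclusion follows by applying the hypothesis $q_l > r_{l+2}$ at $l = l^*$: since $l^* \le \hat{n} \le n-2$, this is valid and yields $b_{l^*} > a_{l^*+2}$ as strict integer inequality, whence $a_{l^*+2} \le b_{l^*} - 1 \le p-2-k$, giving $l^*+2 \le j^*$. Therefore $v_p(C_{i,k}) \ge (j^*-1) - l^* \ge 1$, which is the desired congruence. The main technical obstacle is the meticulous $p$-adic bookkeeping across the many Pochhammer factors; the perturbation $-r_j - p$ in the numerator of $R(t)$ is essential here, as it is precisely tailored to produce the clean factor $p(i-r_j)$ rather than some less tractable quantity upon evaluation at $t = k+ip$.
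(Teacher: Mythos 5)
Your proposal is correct and follows essentially the same route as the paper: compute $C_{i,k}$ as the residue at $t=k+ip$, track the $p$-adic valuation of each Pochhammer factor (each containing at most one multiple of $p$, identified via the first $p$-adic digit of the negated argument), and use the hypothesis $q_i > r_{i+2}$ to show the numerator contributes at least one more power of $p$ than the denominator. The only cosmetic differences are that you pin down exactly which denominator block $(t+b_1+1)_{p-b_1-1}^2$ absorbs the multiple $(i+1)p$ where the paper only bounds that block's valuation by $2$, and you phrase the final comparison as $j^* \geq l^*+2$ rather than the paper's count of indices with $a_i < b_\ell$.
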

\begin{proof}
    For each fixed pair $(i_0, k_0)$, the corresponding residue $C_{i_0,k_0}$ can be computed directly as
    \begin{align*}
        C_{i_0,k_0} &= (-t+k_0+i_0p)R(t) \biggr\vert_{t = k_0+i_0p} \\
        =& \frac{\prod\limits_{i=2}^n(k_0+1-r_i+(i_0-1)p)}{(k_0+i_0p)_{a_1+1}^2(k_0+a_1+1+i_0p)_{b_1-a_1}(k_0+b_1+1+i_0p)_{p-b_1-1}^2} \\
        &\times \frac{1}{(-k_0+1)_{k_0-1}(1)_{p-b_{i_0}-k_0-1} \prod\limits_{\substack{i=1 \\ i \ne i_0}}^{\hat{n}} (-k_0+1+(i-i_0)p)_{p-b_i-1}}.
    \end{align*}
    The index of each Pochhammer symbol above is less than or equal to $p$, and so each symbol contains at most one multiple of $p$.  As we saw in \eqref{eq:poch-p-gamma}, for $0 \leq a \leq p-1$, $p \mid (t)_a$ if and only if the index $a$ is larger than the first $p$-adic digit of the negative of the argument, $[-t]_0$.  In this case, the multiple of $p$ appearing in $(t)_a$ is exactly $t+[-t]_0$.  We can use this term by term to evaluate the $p$-adic valuation of the above expression for $C_{i_0,k_0}$.  First we note that, together,
    \[
        (k_0+i_0p)_{a_1+1}(k_0+a_1+1+i_0p)_{b_1-a_1}(k_0+b_1+1+i_0p)_{p-b_1-1}
    \]
    is a product over all $(k_0+i_0p+j)$ with $j$ ranging from $0$ to $p-1$.  In particular, exactly one of these three Pochhammer symbols is divisible by $p$, and the $p$-divisible term is exactly
    \[
        (k_0+i_0p)+[-k_0-i_0p]_0 = (k_0+i_0p)+(p-k_0) = (i_0+1)p.
    \]
    Because $0 < i_0 +1 \leq \hat{n}+1 < p$, the $p$-adic valuation of $(i_0+1)p$ is exactly one.  We could be precise about which of the three Pochhammer symbols this term will appear in for our fixed $k_0$, but it will be enough for our purposes to conclude that
    \begin{equation}\label{eq:a1-b1-valuation}
        \mathrm{ord}_p\left((k_0+i_0p)_{a_1+1}^2(k_0+a_1+1+i_0p)_{b_1-a_1}(k_0+b_1+1+i_0p)_{p-b_1-1}^2\right) \leq 2,
    \end{equation}
    where we use $\mathrm{ord}_p(\cdot)$ to denote the $p$-adic valuation.  Both of the terms $(-k_0+1)_{k_0-1} = (-1)^{k_0-1}(k_0-1)!$ and $(1)_{p-b_{i_0}-k_0-1} = (p-b_{i_0}-k_0-1)!$ have $p$-adic valuation zero.  Thus, we now need only consider
    \[
        \frac{\prod\limits_{i=2}^n(k_0+1-r_i+(i_0-1)p)}{\prod\limits_{\substack{i=1 \\ i \ne i_0}}^{\hat{n}} (-k_0+1+(i-i_0)p)_{p-b_i-1}}.
    \]
    In the denominator, the first $p$-adic digit of the negative of the argument is
    \[
        [k_0-1]_0 = k_0-1.
    \]
    This is smaller than the index if and only if $k_0 < p-b_i$, in which case the multiple of $p$ appearing is $(i-i_0)p$ which has valuation $1$ as $-p < i-i_0 < p$ and $i \ne i_0$.  Let $1 \leq \ell \leq \hat{n}$ be the largest index such that $k_0 < p-b_\ell$.  Then   %
    \begin{equation}\label{eq:denom-valuation}
        \mathrm{ord}_p\left(\prod\limits_{\substack{i=2 \\ i \ne i_0}}^{\hat{n}} (-k_0+1+(i-i_0)p)_{p-b_i-1}\right) = \ell-2.
    \end{equation}
    The $-2$ arises from the omitted terms at $i = 1$ and $i = i_0$. \par
    The numerator behaves quite similarly.  By an analogous argument, we find that 
    \[
        p \mid (k_0+1-r_i+(i_0-1)p)_{p-a_i-1}
    \]
    if and only if $k_0 < p-a_i-1$, in which case the multiple of $p$ appearing is $p(i_0-r_i)$.  We can not guarantee that $i_0-r_i$ has $p$-adic valuation zero, but it must have a nonnegative valuation which will be sufficient.  As $k_0 < p-b_i$, the inequality $k_0 < p-a_i-1$ must hold for all $i$ such that $a_i < b_\ell$.  By hypothesis, $a_i < b_{\ell}$ holds for all $1 \leq i \leq 2+\ell$.  Therefore,
    \begin{equation}\label{eq:num-valuation}
        \nu_p\left(\prod_{i=2}^n (k_0+1-r_i+(i_0+1)p)_{p-a_i-1}\right) \geq \ell+1.
    \end{equation}
    Combining \eqref{eq:a1-b1-valuation}, \eqref{eq:denom-valuation}, and \eqref{eq:num-valuation} then yields
    \[
        \mathrm{ord}_p(C_{i_0,k_0}) \geq \ell+1 - 2 - (\ell-2)= 1,
    \]
    as was to be shown.
\end{proof}
We will see in the proof of \Cref{prop:Dwork-error} that for $k \in I_3$ we have $B_k \equiv 0 \pmod{p}$.  For now we move past $I_3$ and consider the residues on the final interval $I_4$.
\begin{lemma}\label{lem:I3-residues}
    Let $\HD, M$, and $p$ be as in \Cref{lem:beta-hat-residues}.  In the notation of \eqref{eq:R-partial} and \eqref{eq:interval-defn}, $B_k \equiv 0 \pmod{p}$ for all $k \in I_4$.
\end{lemma}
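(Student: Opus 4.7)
The plan is to apply the standard residue formula for a double pole, $B_k = \frac{d}{dt}\bigl[(t+k)^2 R(t)\bigr]\bigr|_{t=-k}$, and decompose it as $B_k = A_k \cdot \bigl((\log N)'(-k) - (\log \tilde D)'(-k)\bigr)$, where $N(t)$ is the numerator of $R(t)$ and $\tilde D(t)$ is its denominator with the double factor $(t+k)^2$ removed. Primitivity of $\HD$ ensures $r_i \notin \Z$ for any $i$, so $N(-k) \neq 0$ in $\C$ and the identity is meaningful over the fraction field. The problem then reduces to showing $v_p(A_k) \geq 2$ while $v_p(B_k/A_k) \geq -1$, which together give $v_p(B_k) \geq 1$.

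For $v_p(A_k)$ I would reuse the term-by-term $p$-adic bookkeeping from the proof of \Cref{lem:beta-hat-residues}. After cancelling $(t+k)^2$ against the double factor $(t+b_1+1)_{p-b_1-1}^2$, the factors $(t)_{a_1+1}^2$, $(t+a_1+1)_{b_1-a_1}$, and the residual squared product evaluate to $p$-adic units at $t = -k$ since $k > b_1 \geq a_1$. Each remaining denominator Pochhammer $(-t+1+ip)_{p-b_i-1}$ with $2 \leq i \leq \hat{n}$ and $b_i < k$ contributes exactly one factor of $p$ (via $(i+1)p$, of valuation $1$ because $p > \hat{n}+1$). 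In the numerator, for each $i \in \{2,\ldots,n\}$ with $a_i < k$, the Pochhammer $(t+1-r_i-p)_{p-a_i-1}$ contributes exactly one factor $-(r_i+1)p$ of valuation $1$ at $t = -k$, using $r_i + a_i \equiv 0 \pmod{p}$ from \Cref{lem:dwork-fixed} and the fact that $r_i+1$ is a $p$-adic unit for $p$ sufficiently large. Setting $m_a(k) = \#\{i \in \{2,\ldots,n\} : a_i < k\}$ and $m_b(k) = \#\{i \in \{2,\ldots,\hat{n}\} : b_i < k\}$, we obtain $v_p(A_k) = m_a(k) - m_b(k)$. The hypothesis $q_i > r_{i+2}$ translates to $b_i > a_{i+2}$, so the shift $i \mapsto i+2$ injects $\{i \in \{2,\ldots,\hat{n}\} : b_i < k\}$ into $\{i \in \{4,\ldots,n\} : a_i < k\}$ (valid because $\hat n \leq n-2$), while the indices $2, 3$ lie in $\{i : a_i < k\}$ (since $k > b_1 > a_3 \geq a_2$) but outside the image of this injection, yielding the key inequality $v_p(A_k) \geq 2$.

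The log-derivative $B_k/A_k$ is a sum of terms $\pm 1/(c-k)$ indexed by the linear factors of $N$ and $\tilde D$. Almost every such term has $|c-k| < p$ and is therefore a $p$-adic unit; the only exceptions are precisely those coming from the factors that contributed the powers of $p$ to $A_k$, namely $-1/((r_i+1)p)$ from the numerator and $\pm 1/((i+1)p)$ from the $(-t+1+ip)_{p-b_i-1}$, each of $p$-adic valuation $-1$. Hence $v_p(B_k/A_k) \geq -1$, and combining with the previous paragraph gives $v_p(B_k) \geq 1$. The main technical obstacle is the combinatorial matching that delivers $v_p(A_k) \geq 2$ rather than merely $\geq 1$: the single power of $p$ that can be lost in the log-derivative must be absorbed, and this is precisely what the hypothesis $q_i > r_{i+2}$ (together with $\hat n \leq n-2$) provides, by pairing each denominator $p$-contribution at index $i$ with a matching numerator $p$-contribution at index $i+2$ while leaving the numerator contributions at indices $2$ and $3$ unmatched.
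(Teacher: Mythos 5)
Your argument follows the paper's proof almost exactly: the same residue formula $B_k = A_k\cdot\frac{\frac{d}{dt}(t+k)^2R(t)}{(t+k)^2R(t)}\big\vert_{t=-k}$, the same term-by-term valuation bookkeeping showing $\mathrm{ord}_p(A_k)\geq 2$ (your explicit injection $i\mapsto i+2$ is a nice way of packaging what the paper asserts by reference to the previous lemma, namely that $k>b_\ell$ forces $k>a_i$ for all $i\leq 2+\ell$), and the same identification of the only non-unit terms in the logarithmic derivative. There is, however, one small but genuine gap: you assume that each $1+r_i$ is a $p$-adic unit ``for $p$ sufficiently large,'' but the lemma is asserted for every $p\equiv 1\pmod M$ with $p>\hat n+1$, and for small such primes this can fail --- e.g.\ $M=12$, $p=13$, $r_i=\tfrac{1}{12}$ gives $1+r_i=\tfrac{13}{12}$, so the numerator factor $-p(1+r_i)$ has valuation $2$ and the corresponding logarithmic-derivative term $-1/\bigl(p(1+r_i)\bigr)$ has valuation $-2$, breaking your bound $v_p(B_k/A_k)\geq -1$. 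The paper closes exactly this hole by introducing $m_p=\max_i \mathrm{ord}_p(1+r_i)$ and observing that the same factor which worsens the logarithmic derivative to valuation $\geq -1-m_p$ also raises $\mathrm{ord}_p(A_k)$ to $\geq 2+m_p$, so the two effects cancel and $v_p(B_k)\geq 1$ still holds. Your proof is repaired by the identical observation; as written it only establishes the lemma for primes large enough that no $1+r_i$ is divisible by $p$.
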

\begin{proof}
The proof is essentially a simpler version of the argument we just made to show \Cref{lem:beta-hat-residues}, with the one complication that our poles now have order $2$.  Thus,
\begin{equation}\label{eq:Bk-I3}
    B_k = \frac{d}{dt}(t+k)^2R(t) \biggr\vert_{t=-k} = (t+k)^2R(t) \frac{\frac{d}{dt}(t+k)^2R(t)}{(t+k)^2R(t)} \biggr\vert_{t=-k}.
\end{equation}
We rewrite $B_k$ in this manner as it will be simpler to evaluate the $p$-adic valuations of the terms arising from $(t+k)^2R(t)$ and its logarithmic derivative separately.  In particular, we note that logarithmic differentiation transforms the Pochhammer symbol $(t)_a$ into
\[
    \frac{\frac{d}{dt}(t)_a}{(t)_a} = \sum_{i=0}^{a-1} \frac{1}{t+i}.
\]
Therefore, the $p$-adic valuation of the logarithmic derivative, after specializing $t=-k$, will be bound below by the negative of the largest exponent appearing on any multiple of $p$ in the original Pochhammer symbol.  \par
We turn our attention to the $p$-adic valuation of
\begin{align*}
    (t+k)^2R(t)\biggr\vert_{t=-k} =& \frac{\prod_{i=2}^n (-k+1-r_i-p)_{p-a_i-1}}{(-k)_{a_1}^2(-k+a_1+1)_{b_1-a_1}(-k+b_1+1)_{k-b_1-1}^2(1)_{p-k-1}^2} \\
    &\times  \frac{1}{\prod_{i=2}^{\hat{n}}(k+1+ip)_{p-b_i-1}}.
\end{align*}
This valuation can be computed via the same direct approach we used in the proof of \Cref{lem:beta-hat-residues}, and the results are quite similar.  As such, we omit many of the details.  First, we find
\[
    \mathrm{ord}_p((-k)_{a_1}^2(-k+a_1+1)_{b_1-a_1}(-k+b_1+1)_{k-b_1-1}^2(1)_{p-k+1}^2) = 0,
\]
as the one guaranteed multiple of $p$ is at the residue $t+k$ that we are removing.  Next, the terms $(k+1+ip)_{p-b_i-1}$ contain a multiple of $p$, namely $p(i+1)$ which has valuation $1$, if and only if $k > b_i$.  Thus, letting $2 \leq \ell \leq \hat{n}$ be the maximal index for which $k > b_\ell$, we find
\[
    \mathrm{ord}_p\left(\prod_{i=2}^{\hat{n}}(k+1+ip)_{p-b_i-1}\right) = \ell-1.
\]
The terms $(-k+1-r_i-p)_{p-a_1-1}$ appearing in the numerator will be divisible by $p$ if and only if $k > a_i$.  As we saw in the proof of the previous Lemma, our hypotheses ensure this holds for all $1 \leq i \leq 2+\ell$.  The multiple of $p$ which does appear in this case is $-p(1+r_i)$, which has $p$-adic valuation at least $1$.  As this valuation could a priori be larger than $1$, we define
\[
    m_p = \max_{1 \leq i \leq n} \mathrm{ord}_p(1+r_i).
\]
We conclude
\[
    \mathrm{ord}_p\left(\prod_{i=2}^n (-k+a_i+1-p)_{p-a_i-1}\right) \geq \ell+1+m_p.
\]
Therefore,
\[
    \mathrm{ord}_p\left((t+k)^2R(t) \biggr\vert_{t=-k}\right) \geq 2+m_p.
\]
Additionally, we have seen that only linear powers of $p$ appear in the denominator $(t+k)^2R(t)$, and so the maximal exponent of $p$ appearing overall will be $1+m_p$.  As noted above, this guarantees that
\[
    v_p\left(\frac{\frac{d}{dt}(t+k)^2R(t)}{(t+k)^2R(t)}\right) \geq -1-m_p.
\]
Taken together, $v_p(B_k) \geq 1$, as was to be shown.
\end{proof}
\begin{proposition}\label{prop:Dwork-error}
    With $\HD, \, M$, and $p$ as in \Cref{lem:beta-hat-residues} and $E_{\mathrm{Dwork}}$ defined as in \Cref{lem:Dwork-congruence}, we have
    \[
        E_{\mathrm{Dwork}}(\ba, \bbeta; 1) \equiv  (-1)^{\sum_{i=2}^{\hat{n}} b_i} \G_p\left(\frac{\bbeta}{\ba}\right) \mathrm{Res}_{t=\infty}(R(t)) \pmod{p},
    \] where $\G_p\left(\frac{\bbeta}{\ba}\right) \colonequals \prod_{i=1}^n \frac{\G_p(q_i)}{\G_p(r_i)}.$
\end{proposition}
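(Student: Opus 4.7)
The strategy is a residue-sum argument on $\mathbb{P}^{1}$. Since $R(t)\in\Q(t)$, the sum of all its residues --- at finite poles and at infinity --- equals zero. The plan is to show that modulo $p$ the sum of residues at the finite poles reduces, up to a common prefactor $(-1)^{\sum_{i=2}^{\hat n}b_{i}}\G_{p}(\bbeta/\ba)\in\Z_{p}^{\times}$, exactly to $E_{\mathrm{Dwork}}(\ba,\bbeta;1)$. The stated congruence will then follow from $\sum_{\mathrm{finite}}\mathrm{Res}=-\mathrm{Res}_{t=\infty}(R(t))$, with signs absorbed into the standard convention for the residue at infinity.

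First I would dispose of the residues outside $I_{1}\cup I_{2}$. \Cref{lem:beta-hat-residues} kills the $C_{i,k}$ at poles $t=k+ip$ with $i\ge 2$, and \Cref{lem:I3-residues} handles $B_{k}$ for $k\in I_{4}$. For $k\in I_{3}$ the pole is simple, so $B_{k}=(t+k)R(t)|_{t=-k}$; repeating the valuation bookkeeping of \Cref{lem:I3-residues} together with the observation that $\hypcoeff{k}$ already carries a factor of $p$ once $k>a_{2}$ (see \Cref{fig:p-slope}) gives $B_{k}\equiv 0\pmod{p}$. On $I_{2}$ the pole is also simple; evaluating $(t+k)R(t)|_{t=-k}$ via \eqref{eq:Rt-defn}, converting each Pochhammer factor by \eqref{eq:poch-p-gamma}, and invoking \Cref{lem:dwork-fixed} produces, modulo $p$, the universal constant $(-1)^{\sum b_{i}}\G_{p}(\bbeta/\ba)$ times $\hypcoeff{k}$. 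The surviving rational factor $1/(pr_{1})$ comes from the squared denominator $(t)_{a_{1}+1}^{2}$: one factor of $p$ cancels $(t+k)$, and the other leaves $pr_{1}$ after reduction.

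The double poles on $I_{1}$ require more care. Writing
\[
    B_{k}=\left.[(t+k)^{2}R(t)]\right|_{t=-k}\cdot\left.\frac{d}{dt}\log\bigl[(t+k)^{2}R(t)\bigr]\right|_{t=-k},
\]
the leading factor is handled exactly as on $I_{2}$ and reduces to $(-1)^{\sum b_{i}}\G_{p}(\bbeta/\ba)\hypcoeff{k}$. The logarithmic derivative is a finite harmonic sum in $t$, with one contribution per Pochhammer factor of $(t+k)^{2}R(t)$; specializing at $t=-k$ these sums must be repackaged $p$-adically via \Cref{thm:p-gamma-approx}, the reflection identity \eqref{eq:p-gamma-reflect}, and the definition of $G_{1}$ to produce exactly $G_{1}(\ba+k,\bbeta+k)\pmod{p}$. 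Summing over $I_{1}\cup I_{2}$ will then give
\[
    \sum_{k\in I_{1}\cup I_{2}} B_{k}\equiv(-1)^{\sum_{i=2}^{\hat n}b_{i}}\G_{p}\!\left(\frac{\bbeta}{\ba}\right)E_{\mathrm{Dwork}}(\ba,\bbeta;1)\pmod{p},
\]
which by the vanishing step equals the sum of all finite residues modulo $p$, hence equals $-\mathrm{Res}_{t=\infty}(R(t))$ by the residue theorem. The hypothesis $\gamma(\HD)\le 1$ ensures that the numerator and denominator degrees in \eqref{eq:Rt-defn} are balanced so that $\mathrm{Res}_{t=\infty}$ is controlled and nontrivial precisely when $\gamma(\HD)=1$, consistent with the $\delta_{\gamma(\HD)=1}$ term appearing in \Cref{thm:supercongruences}.

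The main obstacle will be the identification in the previous paragraph of the logarithmic derivative with $G_{1}(\ba+k,\bbeta+k)$: each Pochhammer factor in \eqref{eq:Rt-defn} must be matched to the appropriate shift in the argument of $G_{1}$, the $\Lambda$-type discrepancies \eqref{eq:Lambda-simple} between classical and $p$-adic gamma must be tracked carefully, and the numerator perturbation $-r_{i}-p$ in \eqref{eq:Rt-defn} --- engineered precisely so that its logarithmic derivative contributes nothing modulo $p$ beyond the desired $G_{1}$ shifts --- must be verified to do its job. A secondary point requiring attention is the sign $(-1)^{\sum_{i=2}^{\hat n}b_{i}}$, which arises from the overall sign accumulated when converting all of the Pochhammer symbols in \eqref{eq:Rt-defn} into ratios of $p$-adic gamma values via \eqref{eq:poch-p-gamma}, and which should be verified to match on both sides after Step 2.
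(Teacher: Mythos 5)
Your proposal follows essentially the same route as the paper's proof: the residue theorem applied to the partial-fraction decomposition \eqref{eq:R-partial}, with \Cref{lem:beta-hat-residues} and \Cref{lem:I3-residues} disposing of the residues outside $I_1\cup I_2$, the double poles on $I_1$ yielding $G_1(\ba+k,\bbeta+k)$ through the logarithmic derivative of $(t+k)^2R(t)$, and the simple poles on $I_2$ yielding the $\frac{1}{pr_1}$ terms. One minor slip worth noting: on $I_2$ the pole at $t=-k$ is simple and comes from the factor $(t+a_1+1)_{b_1-a_1}$ rather than from the squared factor $(t)_{a_1+1}^2$, and the $\frac{1}{pr_1}$ arises when rewriting the residue in terms of $\hypcoeff{k}$ via $\G_p(r_1+k)=\frac{(-1)^k}{pr_1}\G_p(r_1)(r_1)_k$ for $k>a_1$; this does not affect the validity of your overall argument.
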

\begin{proof}
    When $\l = 1$, the error term \eqref{eq:EDwork} reduces to
    \[
        E_{\mathrm{Dwork}}(\ba, \bbeta; 1) = \sum_{k_0=0}^{a_1} \hypcoeff{k_0} G_1(\boldsymbol{\alpha}+k_0, \boldsymbol{\beta}+k_0) + \sum_{k_0=a_1+1}^{a_2} \hypcoeff{k_0} \frac{1}{p r_1}.
    \]
    With our rational function $R(t)$ expressed in the partial fraction decomposition \eqref{eq:R-partial}, the Residue Theorem then implies
    \begin{equation}\label{eq:res-sum}
        \sum_{k=0}^{p-1} B_k + \sum_{i=2}^{\hat{n}}\sum_{k=1}^{p-b_i-1} C_{i,k} = -\mathrm{Res}_{t = \infty} (R(t)).
    \end{equation}
    We have already considered the $p$-adic behavior of $C_{i,k}$ as well as $B_k$ for $k \in I_4$, so we now turn our attention to $B_k$ for $k \in I_1$, $I_2$, and $I_3$.  
    
    First, for $k \in I_1$, $A_k$ can be computed directly as
    \begin{equation}\label{eq:Ak-direct}
        A_k = \frac{\prod_{i=2}^n (-k+1-r_i-p)_{p-a_i-1}}{(k!)^2(a_1-k)!^2(-k+a_1+1)_{b_1-a_1}(-k+b_1+1)_{p-b_1-1}^2 \prod_{i=2}^{n}(k+1+ip)_{p-b_i-1}}.
    \end{equation}
    Note that we have changed the index on the product in the denominator from $\hat{n}$ to $n$.  Either indexing gives the same value, as $q_i = 1$ implies $b_i=p-1$ and the corresponding Pochhammer symbol is equal to $1$.  Reintroducing those $q_i$ which are equal to $1$ now simplifies our organization throughout the remainder of the proof.  We interpret $A_k$ $p$-adically using \eqref{eq:poch-p-gamma}. 
    For example,
    \begin{align*}
        (-k+1-r_i-p)_{p-a_i-1} &= (-1)^{a_i} \frac{\G_p(-k-pr_i)}{\G_p(-k+1-r_i-p)} \left(-pr_i\right)^{\nu(p-a_i-1, p-a_i-1+k)} \\
        &= (-1)^{a_i} \frac{\G_p(-k-pr_i)}{\G_p(-k+1-r_i-p)}.
    \end{align*}
    In fact, the $\nu$ terms are zero for each of our Pochhammer symbols, and we are left with only the signs and the $\G_p$ values.  Following a similar analysis for each of the remaining Pochhammer symbols in $A_k$, we find
    \[
        A_k = \frac{(-1)^{\sum_{i=1}^n a_i+b_i} \G_p(-k+b_1+1)\prod_{i=2}^{n}\G_p(-k-pr_i)\G_p(k+1+ip)}{\G_p^2(k+1)\G_p(a_1-k+1)\G_p^2(p-k)\prod_{i=2}^n \G_p(-k+1-r_i-p)\G_p(k-b_i+(1+i)p)}
    \]
    This expression can be simplified extensively using \eqref{eq:p-gamma-reflect}, \eqref{eq:poch-p-gamma}, and \eqref{eq:p-gamma-expand}.  Namely,
    \begin{align*}
        A_k &\hspace{-1mm}\stackrel{\eqref{eq:p-gamma-reflect}}{=} (-1)^{\sum_{i=2}^n b_i} \frac{ \G_p(k-a_1)\G_p^2(k+1-p)\prod_{i=2}^n\G_p(k+r_i+p)\G_p(k+1+ip)}{\G_p^2(k+1)\G_p(k-b_1) \prod_{i=2}^n \G_p(k+1+pr_i)\G_p(k-b_i+(1+i)p)} \\
        &\hspace{-2mm} \stackrel{\eqref{eq:poch-p-gamma}}{\equiv} (-1)^{\sum_{i=2}^n b_i} \hypcoeff{k} \G_p\left(\frac{\ba}{\bbeta}\right) \pmod{p}.
    \end{align*}
    Next, we compute $B_k$ using the residue formula

    \begin{align*}\label{eq:Bk-res-defn}
        B_k &= \lim_{t \to -k} \frac{d}{dt}\left((t+k)^2R(t)\right) 
        = A_k \lim_{t \to -k} \frac{\frac{d}{dt}\left((t+k)^2R(t)\right)}{(t+k)^2R(t)}.
    \end{align*}
    Therefore,
    \begin{align*}
        \frac{B_k}{A_k} &= \sum_{i=2}^n \sum_{j=1}^{p-a_i-1} \frac{\frac{d}{dt}(t+1-r_i-p)_{p-a_i-1}}{(t+1-r_i-p)_{p-a_i-1}} - 2 \frac{\frac{d}{dt}(t)_k}{(t)_k} - 2 \frac{\frac{d}{dt}(t+k+1)_{a_1-k}}{(t+k+1)_{a_1-k}} \\
        &\hspace{10mm} - \frac{\frac{d}{dt}(t+a_1+1)_{b_1-a_1}}{(t+a_1+1)_{b_1-a_1}} - 2 \frac{\frac{d}{dt}(t+b_1+1)_{p-b_1-1}}{(t+b_1+1)_{p-b_1-1}} - \sum_{i=2}^n \frac{\frac{d}{dt}(-t+1+ip)_{p-b_i-1}}{(-t+1+ip)_{p-b_i-1}} \biggr\vert_{t=-k}.
    \end{align*}
    Each of these logarithmic derivatives can be evaluated using \eqref{eq:poch-diff-G1}---as in the computation of $A_k$ each of the $\nu$ terms will be zero.  Then, the $G_1$ terms are simplified using \eqref{eq:G1-reflect} so that each term has a positive $k$ and then reduced modulo $p$, using \eqref{eq:G1-p-shift} to remove each multiple of $p$ appearing inside of a $G_1$.  For example,
    \begin{align*}
        \frac{\frac{d}{dt}(t+1-r_i-p)_{p-a_i-1}}{(t+1-r_i-p)_{p-a_i-1}} 
        & \equiv G_1(k+1)-G_1(k+r_i) \pmod{p}.
    \end{align*}
    After computing an analogous reduction 
    we find
    \begin{equation}\label{eq:Bk-Ak-ratio-I1}
        \frac{B_k}{A_k} \equiv -G_1(\ba+k,\bbeta+k) \pmod{p}.
    \end{equation}
    Thus,
    \begin{equation*}\label{eq:Bk-I1}
        B_k \equiv (-1)^{1+\sum_{i=2}^n b_i}\G_p\left(\frac{\ba}{\bbeta}\right) \hypcoeff{k} G_1(\ba+k,\bbeta+k) \pmod{p}.
    \end{equation*}
    We now turn our attention to the interval $I_2 \cup I_3$.  Suppose $a_1+1 \leq k \leq b_1$.  Then
    \begin{align*}
        B_k &= (t+k)R(t) \biggr\vert_{t=-k} \\
        &= \frac{\prod_{i=2}^n (-k+1-r_i-p)_{p-a_i-1}}{(-k)_{a_1+1}^2(-k+a_1+1)_{k-a_1-1}(b_1-k)!(-k+b_1+1)_{p-b_i-1}^2 \prod\limits_{i=2}^{n}(k+1+ip)_{p-b_i-1}}
    \end{align*}
    We observe that the denominator never contains a multiple of $p$ as $k \leq b_1$, whereas the numerator will contain at least one multiple of $p$ for all $k \geq a_2$.  Thus, $B_k \equiv 0 \pmod{p}$ for all $k  \in I_3$.  On $I_2$, all of the $\nu$ terms arising from \eqref{eq:poch-p-gamma} will be zero and we can reduce in the same manner as we used for $A_k$ above.  After doing so, we find that for $a_1+1 \leq k \leq a_2$,
    \begin{equation}\label{eq:Bk-I2}
        B_k \equiv \frac{(-1)^{1+\sum_{i=2}^n b_i}}{pr_1} \hypcoeff{k} \G_p\left(\frac{\ba}{\bbeta}\right) \pmod{p}.
    \end{equation}
    The $1/(pr_1)$ arises from the fact that $k>a_1$ implies
    \[
        \G_p(r_1+k) = \frac{(-1)^k}{pr_1} \G_p(r_1)(r_1)_k.
    \]
    We now return to the residue sum $\eqref{eq:res-sum}$, which we reduce modulo $p$.  By \Cref{lem:beta-hat-residues}, \Cref{lem:I3-residues}, and the fact that $B_k \equiv 0 \pmod{p}$ for $k \in I_3$, we have
    \begin{align*}
        -\mathrm{Res}_{t=\infty}(R(t)) &\equiv \sum_{k=0}^{a_2} B_k \pmod{p} \\
        &\equiv- (-1)^{\sum_{i=2}^n b_i} \G_p\left(\frac{\ba}{\bbeta}\right) \biggr(\sum_{k=0}^{a_1} \hypcoeff{k}G_1(\ba+k, \bbeta+k) \\
        &\hspace{20mm} + \sum_{k=a_1+1}^{a_2} \hypcoeff{k} \frac{1}{pr_1} \biggr) \pmod{p} \\
        &\equiv- (-1)^{\sum_{i=2}^n b_i} \G_p\left(\frac{\ba}{\bbeta}\right)E_{\mathrm{Dwork}}(\ba,\bbeta;1) \pmod{p}.
    \end{align*}
    This completes the proof.
\end{proof}
In light of \Cref{rem:role-of-EDwork}, we are particularly interested in cases where this error term $E_{\mathrm{Dwork}}(\ba, \bbeta; 1)$ vanishes modulo $p$, which will occur 
when $R(t)$ is holomorphic at $\infty$. Recalling $\gamma(\HD)=-1+\sum_{i=1}^n(q_i-r_i)$, this vanishing of $E_{\mathrm{Dwork}}$ can be rephrased purely arithmetically as follows:
\begin{corollary}\label{cor:Dwork-super}
    With $\ba, \bbeta, M$, and $p$ as in \Cref{lem:beta-hat-residues}, if $\gamma(\HD) \le 1$ we have 
    \[
        F_{s+1}(\ba, \bbeta; 1) \equiv F_s (\ba, \bbeta; 1) F_1(\ba, \bbeta; 1) \pmod{p^2}.
    \]
\end{corollary}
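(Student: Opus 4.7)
The plan is to combine \Cref{lem:Dwork-congruence} and \Cref{prop:Dwork-error} via the specialization $\l=1$.  From \Cref{lem:Dwork-congruence} at $\l=1$ I obtain directly
\[
    F_{s+1}(\ba,\bbeta;1)-pF_s'(\ba,\bbeta;1)\,E_{\mathrm{Dwork}}(\ba,\bbeta;1)\equiv F_s(\ba,\bbeta;1)F_1(\ba,\bbeta;1)\pmod{p^2},
\]
so the corollary follows once I show the middle term lies in $p^2\Z_p$.  Under the standing hypothesis $p\equiv 1\pmod M$, the Dwork dash fixes each $r_i$ and $q_i$ by \Cref{lem:dwork-fixed}, and the corresponding hypergeometric coefficients $\hypcoeff{k}$ lie in $\Z_p$; hence $F_s'(\ba,\bbeta;1)\in\Z_p$.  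Since $E_{\mathrm{Dwork}}(\ba,\bbeta;1)\in\Z_p$ as well, the problem reduces to proving $E_{\mathrm{Dwork}}(\ba,\bbeta;1)\equiv 0\pmod p$.

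By \Cref{prop:Dwork-error}, this congruence will follow from the stronger fact $\mathrm{Res}_{t=\infty}R(t)=0$, where $R(t)$ is the rational function defined in \eqref{eq:Rt-defn}.  The residue at infinity of any rational function $P/Q$ vanishes as soon as $\deg Q-\deg P\ge 2$, so what remains is a direct degree count.  Using $a_i=r_i(p-1)$ and $b_i=q_i(p-1)$ from \Cref{lem:dwork-fixed}, the numerator of $R(t)$ has degree $(p-1)\sum_{i=2}^n(1-r_i)$, while the denominator has degree
\[
    2(a_1+1)+(b_1-a_1)+2(p-b_1-1)+\sum_{i=2}^{\hat n}(p-b_i-1)=a_1-b_1+2p+(p-1)\sum_{i=2}^{n}(1-q_i),
\]
where I have extended the last sum to $i=2,\ldots,n$ by noting that $q_i=1$ for $i>\hat n$ forces $p-b_i-1=0$.

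Substituting and invoking $\sum_{i=1}^n(q_i-r_i)=1+\gamma(\HD)$ from \eqref{eq:gamma-defn}, the degree difference simplifies to
\[
    \deg(\text{denom})-\deg(\text{num})=2p+(p-1)\sum_{i=1}^n(r_i-q_i)=p+1-(p-1)\gamma(\HD).
\]
Under the hypothesis $\gamma(\HD)\le 1$ this quantity is at least $2$, so $\mathrm{Res}_{t=\infty}R(t)=0$ identically, completing the argument.  I do not anticipate any genuine obstacle: the heavy lifting has already been done in \Cref{prop:Dwork-error}, and the proof amounts to a clean degree bookkeeping.  The only point requiring care is the extension of the denominator sum from $\hat n$ to $n$ so that $\sum(1-r_i)$ and $\sum(1-q_i)$ pair up to produce $\gamma(\HD)$ in the final formula.
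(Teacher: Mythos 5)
Your proposal is correct and follows essentially the same route as the paper: both reduce the claim, via \Cref{lem:Dwork-congruence} and \Cref{prop:Dwork-error}, to showing $\mathrm{Res}_{t=\infty}(R(t))=0$, and both establish this by the degree count $\deg(\mathrm{denom})-\deg(\mathrm{num})=2p+\sum_{i=1}^n(a_i-b_i)\ge 2$ under $\gamma(\HD)\le 1$. Your bookkeeping (extending the denominator sum from $\hat n$ to $n$ and arriving at $p+1-(p-1)\gamma(\HD)\ge 2$) is just a slightly more explicit version of the paper's one-line verification.
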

\begin{proof}
    We assume that $\gamma(\HD) \le 1$, namely $\sum_{i=1}^n q_i-r_i\le 2$,  and multiply both sides by $p-1$.  Recalling that $a_i = (p-1)r_i$ and $b_i = (p-1)q_i$, our inequality then becomes
    \begin{align*}
        2p + \sum_{i=1}^n (a_i-b_i) &\geq 2.
    \end{align*}
    It is easy to check that the left-hand side of the above inequality is exactly the degree of the denominator of $R(t)$ minus the degree of the numerator, and so this final inequality 
    implies $\mathrm{Res}_{t=\infty}(R(t))$ being zero and hence for the error term $E_{\mathrm{Dwork}}(\ba, \bbeta; 1)$ to vanish modulo $p$.
\end{proof}
\subsection{Gross--Koblitz type supercongruences}\label{ss:GK}
\subsubsection{The general case}
We now consider the Gross--Koblitz type supercongruence stated in \Cref{rem:super-pieces}.  Recall that we have a direct relationship between the $p$-adic gamma function and the Gauss sum from 
\Cref{thm:gk}.  This can be extended to the following relationship between $H_p$ and truncated hypergeometric functions.
\begin{lemma}\label{lem:GK-congruence}
    Let $\ba, \bbeta$, and $M$ be as in \Cref{lem:Dwork-congruence}.  For a fixed prime $p \equiv 1 \pmod{M}$ and with $\omega_p$ the Teichm\"{u}ller character of $\mathbb{F}_p$, we have
    \[
        H_p(\ba, \bbeta; \l; \bar{\omega}_p) - E_{\mathrm{GK}}(\ba, \bbeta; \l^p)p \equiv F (\ba, \bbeta; \l^p)_{p-1} \pmod{p^2},
    \] 
    where
    \begin{equation}\label{eq:E(GK)}
         E_{\mathrm{GK}}(\ba, \bbeta; \l) \colonequals \sum_{k=0}^{a_1} \hypcoeff{k} \l^k \left(G_1(\boldsymbol{\alpha}+k, \boldsymbol{\bbeta}+k)k+1\right) + \sum_{k=a_1+1}^{a_2} \hypcoeff{k} \l^k \frac{k}{pr_1}.
    \end{equation}
\end{lemma}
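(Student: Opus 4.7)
The argument will mirror that of \Cref{lem:Dwork-congruence}, but starts on the Gauss-sum side via Gross--Koblitz. First, I expand $H_p(\ba,\bbeta;\l;\bar\omega_p)$ using \eqref{eq:H} and index the character sum by $\chi=\omega_p^{k}$ for $0\le k\le p-2$. Because $p\equiv 1\pmod M$, \Cref{lem:dwork-fixed} gives $\iota_\wp(r_j)=\bar\omega_p^{a_j}$ and $\iota_\wp(-q_j)=\bar\omega_p^{b_j}$, so every Gauss sum in \eqref{eq:H} takes the form $\g(\bar\omega_p^{s})$ for a nonnegative integer $s$ (after reduction modulo $p-1$), and \Cref{thm:gk} converts each such factor into $-\pi_p^{s}\Gamma_p(s/(p-1))$. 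The $\pi_p$-exponents from the numerator and denominator of \eqref{eq:H} cancel up to a multiple of $p-1$, so the relation $\pi_p^{p-1}=-p$ reduces the overall $\pi_p$-contribution to an integer power of $p$, while the Teichm\"{u}ller identity $\omega_p^k(\l)\equiv \l^{pk}\pmod{p^2}$ produces the $\l^{pk}$ factors that will eventually assemble into $F(\ba,\bbeta;\l^p)_{p-1}$.

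The core step is a mod-$p^2$ expansion of each resulting $\Gamma_p$-ratio. The arguments naturally take the form $r_j+k/(p-1)$ (up to sign), which I compare against $r_j\pm k$ via the observation $k/(p-1)\pm k\in p\Z_p$. Applying \Cref{thm:p-gamma-approx} furnishes a multiplicative correction which, after shifting back via \eqref{eq:p-gamma-shift} and recollecting as in \eqref{eq:p-gamma-expand}, contributes the factor $G_1(\ba+k,\bbeta+k)\cdot k$ appearing in $E_{\mathrm{GK}}$; the extra factor of $k$ (absent in $E_{\mathrm{Dwork}}$) reflects that each shift in the $\Gamma_p$-argument is scaled by $k/(p-1)$ rather than by a derivative. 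As in the Dwork case, the mismatch between $\Gamma$- and $\Gamma_p$-functional equations produces the same $\Lambda_{\ba,\bbeta}$-type correction as in \eqref{eq:Lambda-simple}, which is responsible for the second sum $\sum_{a_1<k\le a_2}\hypcoeff{k}\l^{pk}\tfrac{k}{pr_1}$ of $E_{\mathrm{GK}}$, while \Cref{rem:lowest-weight} annihilates all contributions from $k>a_2$ modulo $p^2$. The residual ``$+1$'' inside $G_1(\ba+k,\bbeta+k)\cdot k+1$ originates from the prefactor $\tfrac{1}{1-p}=-1-p+O(p^2)$ in \eqref{eq:H}: its subleading $-p$ term multiplies each leading summand $\hypcoeff{k}\l^{pk}$ and contributes precisely the desired constant at order $p$ after signs are reconciled.

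The main obstacle I anticipate is bookkeeping --- specifically, tracking the $\pi_p$-exponents (with the wrap-around modulo $p-1$ when $a_j+k\ge p-1$ or $b_j-k<0$) and correctly handling the signs arising from the reflection identity \eqref{eq:p-gamma-reflect}, so that the clean combination $G_1(\ba+k,\bbeta+k)\cdot k+1$ emerges without spurious terms. A secondary technical check is that the additional Gauss-sum factors indexed by those $q_i<1$ do not contribute modulo $p^2$; this should follow from a counting argument analogous to the one used in \Cref{lem:beta-hat-residues} under the hypothesis $p>\hat n+1$ of the remark following \Cref{thm:supercongruences}.
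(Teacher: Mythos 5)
Your strategy matches the paper's proof essentially step for step: expand $H_p$ from its Gauss-sum definition, convert each factor to a $\Gamma_p$-value via Gross--Koblitz while tracking the $\pi_p$-powers and the wrap-around shifts, apply the Long--Ramakrishna expansion (\Cref{thm:p-gamma-approx}) to produce the $G_1(\ba+k,\bbeta+k)\cdot k$ terms, use $\omega_p^k((-1)^n\l)\equiv(-1)^{nk}\l^{pk}\pmod{p^2}$, and kill all $k>a_2$ by \Cref{rem:lowest-weight}. Two small slips to correct when you write this out: $p$-adically $\tfrac{1}{1-p}=1+p+O(p^2)$, not $-1-p+O(p^2)$ (the mechanism you describe for the ``$+1$'' is otherwise exactly the paper's), and the Gauss-sum factors attached to the $q_i<1$ genuinely do contribute --- they supply the $(q_i)_k$ in the denominators of $\hypcoeff{k}$ --- so no vanishing argument and no hypothesis $p>\hat n+1$ is needed for this lemma; that hypothesis only enters later, in the residue-sum evaluation of the error term in \Cref{prop:GK-error}.
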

\begin{remark}\label{rem:geom-error-terms} 
    Formally,
    \begin{equation}
        E_{\mathrm{GK}}(\ba, \bbeta; \l)-\l\frac{d}{d\l} E_{\mathrm{Dwork}}(\ba, \bbeta; \l)= \sum_{k=0}^{a_1} \hypcoeff{k} \l^k.
    \end{equation}
\end{remark}

\begin{proof}[Proof of \Cref{lem:GK-congruence}]
    As before, we let $1 \leq \hat{n} \leq n$ be the minimal index such that $q_{\hat{n}} \ne 1$.  If no such $\hat{n}$ exists, instead take $\hat{n}=0$.  By definition,
    \begin{align*}
        H_p(\ba, \bbeta; \lambda; \bar{\omega}_p) &= \frac{1}{1-p} \sum_{k=0}^{p-2} \omega_p^{-k}\left((-1)^n\lambda\right) \left(\prod_{j=1}^n \frac{\mathfrak{g}\left(\omega_p^{-(p-1)r_j-k}\right)}{\mathfrak{g}\left(\omega_p^{-(p-1)r_j}\right)}\right) \\
        &\hspace{30mm} \times \prod_{i=1}^{\hat{n}} \frac{\mathfrak{g}\left(\omega_p^{(p-1)q_i+k}\right)}{\mathfrak{g} \left(\omega_p^{(p-1)q_i}\right)} \left(-\mathfrak{g}\left( \omega_p^{p-1-k}\right)\right)^{n-\hat{n}}.
    \end{align*}
    First, we reverse the order of summation for $k>0$ by replacing $k$ by $p-1-k$.  This gives us
    \begin{align*}
        H_p(\ba, \bbeta; \lambda; \bar{\omega}_p) &= \frac{1}{1-p} \sum_{k=0}^{p-2} \omega_p^k\left((-1)^n\lambda\right) \left(\prod_{j=1}^n \frac{\mathfrak{g}\left(\omega_p^{(p-1)(1-r_j)+k}\right)}{ \mathfrak{g}\left(\omega_p^{(p-1)(1-r_j)}\right)}\right) \\
        &\hspace{30mm} \times \prod_{i=1}^{\hat{n}} \frac{\mathfrak{g}\left(\omega_p^{-(p-1)(1-q_i)-k}\right)}{\mathfrak{g} \left(\omega_p^{-(p-1)(1-q_i)}\right)} \left(- \mathfrak{g} \left( \omega_p^{k} \right) \right)^{n-\hat{n}}.
    \end{align*}
    We use the Gross--Koblitz formula (\Cref{thm:gk}) to rewrite the above expression in terms of $\Gamma_p$.  To do so we must have our exponent written in the range $[2-p,0]$, and so a shift by $p-1$ may be required if the exponent does not fall into this range.  With this in mind and recalling that $a_j = (p-1)r_j$,
    \begin{singnumalign}\label{eq:alpha-gauss-num}
        \g\left( \omega_p^{(p-1)(1-r_j)+k} \right) &=
        \begin{cases}
            \g\left(\omega_p^{(p-1)(-r_j)+k}\right) & 0 \leq k \leq a_j; \\
            \g\left(\omega_p^{(p-1)(-1-r_j)+k}\right) & a_j < k
        \end{cases} \\
        &=
        \begin{cases}
            -\pi_p^{r_j(p-1)-k} \G_p\left(\frac{r_j(p-1)-k}{p-1}\right) & 0 \leq k \leq a_j; \\
            -\pi_p^{(1+r_j)(p-1)-k} \G_p\left( \frac{(1+r_j)(p-1)-k}{p-1} \right) & a_j < k.
        \end{cases}
    \end{singnumalign}
    We can compute $\mathfrak{g}\left(\omega_p^{(p-1)(1-r_j)}\right)$ similarly.  Taking the quotient and simplifying we find
    \[
        \frac{\g\left(\omega_p^{(p-1)(1-r_j)+k}\right)}{\g\left(\omega_p^{(p-1)(1-r_j)}\right)} = (-p)^{\nu(k, a_j)}\pi_p^{-k} \frac{\G_p\left(\frac{r_j(p-1)-k}{p-1} + \nu(k,a_j) \right)}{\G_p(r_j)},
    \]
    where $\nu$ is defined as in \eqref{eq:nu-defn}.   An analogous computation shows that
    \begin{equation*}\label{eq:q1-gauss-full}
        \frac{\g\left(\omega_p^{-(p-1)(1-q_i)-k}\right)}{\g \left(\omega_p^{-(p-1)(1-q_i)}\right)} = (-1)^{b_i+1}(-1/p)^{\nu(k+1,b_i)} \pi_p^k \Gamma_p\left( \frac{k}{p-1} - q_i + \nu(b_i,k)\right)\Gamma_p(q_i)
    \end{equation*}
    and
    \begin{equation*}\label{eq:1-gauss-full}
        -\mathfrak{g}(\omega_p^{-k}) = \pi_p^k \G_p\left(\frac{k}{p-1}\right).
    \end{equation*}
    All together we have
    \begin{singnumalign}\label{eq:Hp-after-GK}
        H_p &= \frac{1}{1-p}\sum_{k=0}^{p-2} \omega_p^k((-1)^n\lambda) \left( \prod_{j=1}^n (-p)^{\nu(k,a_j)} \frac{\Gamma_p\left( r_j - \frac{k}{p-1} + \nu(k,a_j)\right)}{\Gamma_p(r_j)} \right) \\
        &\times \prod_{i=1}^{\hat{n}}\left((-1)^{b_i+1}(-1/p)^{\nu(k+1,b_i)} \Gamma_p\left( \frac{k}{p-1} - q_i + \nu(b_i,k)\right)\Gamma_p(q_i)\right) \left( \Gamma_p\left(\frac{k}{p-1}\right)\right)^{n-\hat{n}}.
    \end{singnumalign}
    Using the functional equation \eqref{eq:p-gamma-shift} as well as the definition of $\nu$ we have
    \begin{equation*}\label{eq:Hp-alpha-gamma-term}
        (-p)^{\nu(k,a_j)}\Gamma_p\left( r_j - \frac{k}{p-1} + \nu(k,a_j)\right) = \left(p\left(r_j - \frac{k}{p-1}\right)\right)^{\nu(k,a_j)}\Gamma_p\left(r_j - \frac{k}{p-1}\right).
    \end{equation*}
    In the product we have
    \begin{singnumalign}\label{eq:Hp-alpha-gamma-product}
        \prod_{j=1}^n (-p)^{\nu(k,a_j)}\Gamma_p\biggr(r_j - &\frac{k}{p-1} + \nu(k,a_j)\biggr) \\
        &= \prod_{j=1}^n \left(p \left(r_j - \frac{k}{p-1}\right)\right)^{\nu(k,a_j)}\Gamma_p\left(r_j - \frac{k}{p-1}\right),
    \end{singnumalign}
    and in particular this vanishes modulo $p^2$ for $k \geq a_2$. \\
    Turning our attention to the $q_i$ term, \eqref{eq:p-gamma-reflect} yields
    \begin{equation*}
        \Gamma_p\left( \frac{k}{p-1} - q_i + \nu(b_i,k)\right) = \frac{(-1)^{x_i}}{\Gamma_p\left(q_i - \frac{k}{p-1} +\nu(k+1,b_i)\right)},
    \end{equation*}
    where
    \begin{equation}\label{eq:x0-exp-defn}
        x_i = 
        \begin{cases}
            \nu(b_i,k)-k+b_i & \text{ if } 0 < \nu(b_i, k) - k + b_i; \\
            \nu(b_i, k) + p - k + b_i & \text{otherwise}.
        \end{cases}
    \end{equation}
    From this, \eqref{eq:p-gamma-shift}, and the fact that $1-\nu(b_i,k) = \nu(k+1,b_i)$ we conclude
    \begin{singnumalign}\label{eq:Hp-beta-gamma-term}
        \left( -p \right)^{-\nu(k+1,b_i)} \Gamma_p \biggr( &\frac{k}{p-1} - q_i + \nu(b_i,k) \biggr) \\[2mm]
        &= \frac{(-1)^{\nu(k+1,b_i)+x_i}}{p^{\nu(k+1,b_i)} \Gamma_p\left(q_i - \frac{k}{p-1} + \nu(k+1, b_i) \right)} \\
        &= \frac{(-1)^{x_i}}{\left(p\left(q_i - \frac{k}{p-1}\right)\right)^{\nu(k+1,b_i)} \Gamma_p\left(q_i - \frac{k}{p-1}\right)}.
    \end{singnumalign}
    Additionally,
    \begin{equation}\label{eq:Hp-1-gamma-term}
        \G_p\left(\frac{k}{p-1}\right) = \frac{(-1)^k}{\G_p\left(1-\frac{k}{p-1}\right)} = (-1)^{k+1}\frac{\G_p(1)}{\G_p\left(1-\frac{k}{p-1}\right)}.
    \end{equation}
    Combining \eqref{eq:Hp-beta-gamma-term} and \eqref{eq:Hp-1-gamma-term} yields
    \begin{singnumalign}\label{eq:Hp-beta-gamma-final}
        &\prod_{i=1}^{\hat{n}} (-1)^{b_i+1}\left( -p \right)^{-\nu(k+1,b_i)}\G_p\left(\frac{k}{p-1}-q_i+\nu(b_i,k)\right)\G_p(q_i) \G_p^{n-\hat{n}}\left(\frac{k}{p-1}\right) \\
        &= \prod_{i=1}^{\hat{n}} \frac{(-1)^{x_i+b_i+1}\G_p(q_i)}{\left(p\left(q_i-\frac{k}{p-1}\right)\right)^{\nu(k+1,b_i)}\G_p\left(q_i-\frac{k}{p-1}\right)} \left(\frac{(-1)^{k+1}\G_p(1)}{\G_p\left(1-\frac{k}{p-1}\right)}\right)^{n-\hat{n}}. \\
    \end{singnumalign}
    Using \eqref{eq:Hp-alpha-gamma-product} and \eqref{eq:Hp-beta-gamma-final}, we can further reduce \eqref{eq:Hp-after-GK} to
    \begin{singnumalign}\label{eq:Hp-full-gamma-final}
        H_p(\ba, \bbeta; \l; \bar{\omega}_p) =& \frac{1}{1-p} \sum_{k=0}^{p-2} (-1)^{\hat{n}+\sum_{i=1}^{\hat{n}}(b_i+x_i)+(n-\hat{n})(k+1)} \omega_p^k((-1)^n\lambda) \\
        &\hspace{5mm} \times \frac{\prod_{j=1}^n \left(p\left(r_j-\frac{k}{p-1}\right)\right)^{\nu(k,a_j)}}{\prod_{i=1}^{\hat{n}}\left(p\left(q_i-\frac{k}{p-1}\right)\right)^{\nu(k+1,b_i)}}\G_p\left(\frac{\ba-\frac{k}{p-1}, \bbeta}{\ba, \bbeta-\frac{k}{p-1}} \right).
    \end{singnumalign}
    As $\G_p$ takes values in the $p$-adic integers, we observe that the $k^{th}$ coefficient of $H_p$ will vanish modulo $p^2$ for all $k > a_2$.  Utilizing \Cref{thm:p-gamma-approx}, we find
    \begin{align*}
        \G_p\left(\frac{\ba-\frac{k}{p-1}, \bbeta}{\ba, \bbeta-\frac{k}{p-1}} \right) &=  \G_p\left(\frac{\ba+k+\frac{kp}{1-p},\bbeta}{\ba, \bbeta+k+\frac{kp}{1-p}}\right)\\
        &\equiv \G_p\left(\frac{\ba+k,\bbeta}{\ba,\bbeta+k}\right) \left(1+G_1(\ba+k, \bbeta+k)\frac{kp}{1-p}\right) \pmod{p^2}.
    \end{align*}
    Furthermore, by \eqref{eq:poch-p-gamma} we have
    \[
        \frac{p^{\nu(k,a_j)}\Gamma_p\left(r_j+k\right)}{\Gamma_p(r_j)} = (-1)^k(r_j)_k(r_j)^{-\nu(k,a_j)}, 
    \]
    and
    \[
        \frac{\Gamma_p(q_1)}{p^{\nu(k+1,b_i)}\Gamma_p(q_i+k)} = \frac{(-1)^kp^{\delta_{k=b_i}}(q_i)^{\nu(k,b_i)}}{(q_i)_k}
    \]
    with $\delta_{k=b_i} = 1$if $k=b_i$ and $0$ otherwise, and
    \[
        \Gamma_p(k+1) = (-1)^{k+1}(1)_k.
    \]
    Therefore,
    \begin{equation}\label{eq:GK-full-p-gamma}
        \G_p\left(\frac{\ba+k, \bbeta}{\ba, \bbeta+k}\right) p^{\sum_{j=1}^n \nu(k,a_j)- \sum_{i=1}^{\hat{n}} \nu(k+1,b_i)} = (-1)^{n-\hat{n}} \prod_{i=1}^n \frac{ (r_i)_k q_i^{\nu(k,b_i)}p^{\chi_{k=b_i}}}{r_i^{\nu(k,a_i)} (q_i)_k} .
    \end{equation}
    For $0 \leq k < b_1$, by definition \eqref{eq:x0-exp-defn} we have $x_i = \nu(b_i,k)-k+b_i = 1-k+b_i$, in which case the sign appearing in \eqref{eq:Hp-full-gamma-final} can be reduced as
    \[
        (-1)^{\hat{n}+\sum_{i=1}^{\hat{n}}(b_i+x_i)+(n-\hat{n})(k+1)} = (-1)^{n(k+1)+\hat{n}}.
    \]
    After factoring in the $(-1)^{n-\hat{n}}$ arising from \eqref{eq:GK-full-p-gamma} this will further reduce to $(-1)^{nk}$.  We let
    \begin{equation*}
        C(\ba, \bbeta,k) \colonequals  \left(\prod_{i=1}^n \frac{\left(1-\frac{k}{r_j(p-1)}\right)^{\nu(k,a_j)}p^{\chi_{k=b_i}}}{\left(1-\frac{k}{q_i(p-1)}\right)^{\nu(k,b_i)}}\right) \hypcoeff{k} \left(1 + G_1(\ba+k, \bbeta+k) \frac{kp}{1-p}\right),
    \end{equation*}
    so that
    \begin{equation}\label{eq:Hp-rewrite-short}
      H_p(\ba, \bbeta; \l; \bar{\omega}_p) \equiv \frac{1}{1-p} \sum_{k=0}^{a_2} (-1)^{nk}\omega_p^k((-1)^n\lambda)  C(\ba, \bbeta,k) \pmod{p^2}
    \end{equation}

    For $0 \leq k \leq a_1$, we have
    \begin{singnumalign}\label{eq:C-first-interval}
        C(\ba, \bbeta, k) &= \hypcoeff{k} \left(1+G_1(\ba+k, \bbeta+k) \frac{kp}{1-p}\right) \\
        &\equiv \hypcoeff{k} \left(1+G_1(\ba+k, \bbeta+k) kp\right) \pmod{p^2}
    \end{singnumalign}
    where the final congruence holds by using the fact that $1/(1-p) \equiv 1+p \pmod{p^2}$.  When $a_1 + 1 \leq k \leq a_2$, we instead have
    \begin{singnumalign}\label{eq:C-second-interval}
        C(\ba, \bbeta,k) &= \hypcoeff{k} \left(1+\frac{k}{r_1(1-p)}\right)\left(1+G_1(\ba+k, \bbeta+k) \frac{kp}{1-p}\right) \\
        &\equiv \hypcoeff{k}\left(1+\frac{k}{r_1}\right) \pmod{p^2}.
    \end{singnumalign}
    Together \eqref{eq:C-first-interval} and \eqref{eq:C-second-interval} allow us to expand \eqref{eq:Hp-rewrite-short} as
    \begin{align*}
        H_p(\ba, \bbeta; \l; \bar{\omega}_p) \equiv& \frac{1}{1-p} \biggr( \sum_{k=0}^{a_1} (-1)^{kn} \omega_p^k\left((-1)^n\lambda\right) \hypcoeff{k} \left(1 + G_1(\ba+k, \bbeta+k) kp\right) \\
        &\hspace{10mm} + \sum_{k=a_1+1}^{a_2} (-1)^{kn} \omega_p^k\left((-1)^n\lambda\right) \hypcoeff{k} \left(1+\frac{k}{r_1}\right)\biggr) \pmod{p^2}.
    \end{align*}
    Hensel lifting applied to the polynomial $f(x) = x^p-p$ with root $(-1)^n\lambda$ gives 
    \[
        \omega_p^k\left((-1)^n \lambda\right) \equiv (-1)^{nk} \lambda^{pk} \pmod{p^2}.
    \]
This together with the congruence $1/(1-p) \equiv 1+p \pmod{p^2}$ yields
    \begin{align*}
        H_p(\ba, \bbeta; \l; \bar{\omega}_p) \equiv& \sum_{k=0}^{a_1} \hypcoeff{k} (\lambda^p)^k \left(1 + (G_1(\ba+k, \bbeta+k)k+1)p\right) \\
        &+ \sum_{k=a_1+1}^{a_2} \hypcoeff{k} (\lambda^p)^k \left(1+\frac{k}{r_1}\right) \pmod{p^2}.
    \end{align*}
   This can now be decomposed as 
    \begin{align*}
        H_p(\ba, \bbeta; \l; \bar{\omega}_p) \equiv& \sum_{k=0}^{p-1} \hypcoeff{k} (\lambda^p)^k + \sum_{k=0}^{a_1} \hypcoeff{k} (\lambda^p)^k p(G_1(\ba+k,\bbeta+k)k+1) \\
        &+ \sum_{k=a_1+1}^{a_2} \hypcoeff{k} (\lambda^p)^k \frac{k}{r_1} \pmod{p^2}.
    \end{align*}
    The right-hand side is exactly $F(\ba, \bbeta; \l^p)_{p-1} + pE_{\mathrm{GK}}(\ba, \bbeta; \l^p)$, completing the proof.
\end{proof}
\subsubsection{The \texorpdfstring{$\l=1$}{lambda = 1} case}
Once again, the error term $E_{\mathrm{GK}}$ at $\lambda = 1$ can be computed explicitly via residue sums.  In particular, \Cref{rem:geom-error-terms} indicates that $E_{\mathrm{GK}}$ will correspond to the residue sum of $tR(t)$.
\begin{proposition}\label{prop:GK-error}
    With $\ba$, $\bbeta, M$, and $p$ as in \Cref{lem:beta-hat-residues} and 
    $R(t)$ as in \eqref{eq:Rt-defn},
    \[
        E_{\mathrm{GK}}(\ba, \bbeta; 1) \equiv (-1)^{1+\sum_{i=2}^{\hat{n}}b_i} \G_p\left(\frac{\bbeta}{\ba}\right) \mathrm{Res}_{t=\infty}(tR(t)) \pmod{p}.
    \]
\end{proposition}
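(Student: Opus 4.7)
The plan is to mirror the proof of \Cref{prop:Dwork-error} applied to the rational function $tR(t)$ in place of $R(t)$, as suggested by the formal identity in \Cref{rem:geom-error-terms}: since multiplication by $\l \frac{d}{d\l}$ converts each coefficient $\hypcoeff{k}\l^{k}$ into $k\hypcoeff{k}\l^{k}$, the discrepancy between $E_{\mathrm{GK}}(\ba,\bbeta;1)$ and $E_{\mathrm{Dwork}}(\ba,\bbeta;1)$ is precisely captured by multiplying the residue-generating rational function by $t$. All residues $A_k$, $B_k$, and $C_{i,k}$ have already been computed (or shown to vanish) modulo $p$ in the proofs of \Cref{lem:beta-hat-residues}, \Cref{lem:I3-residues}, and \Cref{prop:Dwork-error}, so no new $p$-adic estimates are needed.

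The first step is to read off the residues of $tR(t)$ from the partial fraction decomposition \eqref{eq:R-partial}. Writing $t = (t+k) - k$ near a pole at $t=-k$, one finds that at a double pole the residue becomes $A_k - kB_k$, at a simple pole the residue becomes $-kB_k$, and at the pole $t = k+ip$ the $C$-residue is multiplied by $-(k+ip)$. The vanishing results \Cref{lem:beta-hat-residues} and \Cref{lem:I3-residues} together with the estimate $\mathrm{ord}_p(A_k) \geq 2$ for $k \in I_4$ (which is immediate from $(t+k)^2 R(t)\vert_{t=-k}$ having $p$-adic valuation $\geq 2+m_p$, exactly as in the proof of \Cref{lem:I3-residues}) ensure that all contributions from $I_3$, $I_4$, and the $C_{i,k}$-poles vanish modulo $p$, even after multiplication by $t$ or $k+ip$.

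The second step is to plug the values
\[
A_k \equiv (-1)^{\sum_{i=2}^{n} b_i}\,\G_p\!\left(\tfrac{\ba}{\bbeta}\right)\hypcoeff{k}, \qquad B_k \equiv -A_k\,G_1(\ba+k,\bbeta+k) \pmod{p}
\]
for $k \in I_1$, together with the formula for $B_k$ on $I_2$, into the residue expressions $A_k - kB_k$ and $-kB_k$. Collecting terms shows that the two sums $\sum_{k \in I_1}(A_k - kB_k)$ and $\sum_{k \in I_2}(-kB_k)$ reproduce, up to the global factor $(-1)^{\sum_{i=2}^{n} b_i}\G_p(\ba/\bbeta)$, exactly the two pieces of \eqref{eq:E(GK)} at $\l=1$. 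Applying the residue theorem $\sum \mathrm{Res}(tR(t)) = -\mathrm{Res}_{t=\infty}(tR(t))$ and solving for $E_{\mathrm{GK}}(\ba,\bbeta;1)$ using $\G_p(\bbeta/\ba) = \G_p(\ba/\bbeta)^{-1}$ yields the claimed congruence. Finally, one observes that for odd $p$ the parity of $\sum_{i=\hat n+1}^{n} b_i = (n-\hat n)(p-1)$ is even, so $(-1)^{\sum_{i=2}^n b_i} = (-1)^{\sum_{i=2}^{\hat n} b_i}$, matching the exponent in the statement.

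There is no real obstacle beyond bookkeeping: the $p$-adic heavy lifting was already done in \Cref{prop:Dwork-error}, and the only subtle point is checking that the extra factor of $t$ does not introduce new nonzero residues modulo $p$, which reduces to the observation that $kC_{i,k}$, $-kB_k$ on $I_3 \cup I_4$, and $-kA_k$ on $I_4$ still vanish modulo $p$. Once this is verified, the proof consists of a one-line residue theorem and a sign/character identification.
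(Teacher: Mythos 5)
Your proposal is correct and follows essentially the same route as the paper: both arguments read off the residues of $tR(t)$ from the partial fraction decomposition of $R(t)$ (your substitution $t=(t+k)-k$ giving residues $A_k-kB_k$ at double poles and $-kB_k$ at simple poles is exactly the paper's identification $\tilde{A}_k=-kA_k$, $\tilde{B}_k=A_k(1+kG_1(\ba+k,\bbeta+k))$ on $I_1$ and $\tilde{B}_k=-kB_k$ on $I_2\cup I_3$), reuse the mod-$p$ evaluations and vanishing results from \Cref{prop:Dwork-error}, \Cref{lem:beta-hat-residues}, and \Cref{lem:I3-residues}, and finish with the residue theorem. Your explicit checks that $A_k\equiv 0\pmod{p}$ on $I_4$ and that $(-1)^{\sum_{i=2}^{n}b_i}=(-1)^{\sum_{i=2}^{\hat{n}}b_i}$ are exactly the points the paper also relies on, so no gap remains.
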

\begin{proof}
    By definition \eqref{eq:E(GK)},
    \[
        E_{\mathrm{GK}}(\ba, \bbeta; 1) = \sum_{k=0}^{a_1}  \hypcoeff{k}  \left(kG_1(\boldsymbol{\alpha}+k, \boldsymbol{\bbeta}+k)+1\right) + \sum_{k=a_1+1}^{a_2}  \hypcoeff{k}  \frac{k}{pr_1}.
    \]
    In the same way that our previous error term corresponded to the residues of the function $R(t)$ defined in \eqref{eq:Rt-defn}, this error term will relate to the residues of $tR(t)$.  As before, we have a partial fraction decomposition of the form
    \begin{singnumalign}\label{eq:tRt-partial}
        tR(t) =& \sum_{k=0}^{a_1} \frac{\tilde{A}_k}{(t+k)^2} + \frac{\tilde{B}_k}{(t+k)} + \sum_{k=a_1+1}^{b_1} \frac{\tilde{B}_k}{t+k} + \sum_{k=b_1+1}^{p-1} \frac{\tilde{A}_k}{(t+k)^2} + \frac{\tilde{B}_k}{(t+k)} \\
        &+ \sum_{i=2}^{\hat{n}} \sum_{k=1}^{p-b_i-1} \frac{\tilde{C}_{i,k}}{-t+k+ip}
    \end{singnumalign}
    We use the intervals $I_1, I_2, I_3$, and $I_4$ defined in \eqref{eq:interval-defn}.  The coefficients $\tilde{A}_k$, $\tilde{B}_k$, and $\tilde{C}_{i,k}$ are closely related to the coefficients $A_k$, $B_k$, and $C_{i,k}$ appearing in \eqref{eq:R-partial}.  Specifically, from \Cref{lem:beta-hat-residues},
    \begin{equation}\label{eq:C-tilde}
        \tilde{C}_{i,k} = \lim_{t \to k+ip}(-t+k+ip)tR(t) = (k+ip)C_{i,k} \equiv 0 \pmod{p}.
    \end{equation}
    Similarly, the proof of \Cref{lem:I3-residues} implies that, for $k \in I_4$,
    \begin{singnumalign}\label{eq:B-tilde-I3}
        \tilde{B}_k 
        &= -kA_k \lim_{t \to -k} \frac{\frac{d}{dt}\left((t+k)^2tR(t)\right)}{(t+k)^2tR(t)} \equiv 0 \pmod{p}.
    \end{singnumalign}
    For $k \in I_1$, recalling \eqref{eq:Bk-Ak-ratio-I1} we have
    \begin{equation}
        \tilde{A}_k = -kA_k
    \end{equation}
    and
    \begin{equation}
        \tilde{B_k}/\tilde{A_k} = \frac{-1}{k} - G_1(\ba+k, \bbeta+k).
    \end{equation}
    For $k \in I_2 \cup I_3$ we have $\tilde{B_k} = -kB_k$.  In conclusion,
    \begin{equation*}\label{eq:Bk-tilde}
        \tilde{B_k} \equiv
        \begin{cases}
            (-1)^{\sum_{i=2}^{\hat{n}}b_i} \G_p\left(\frac{\ba}{\bbeta}\right)\hypcoeff{k} \left(1+kG_1(\ba+k, \bbeta+k)\right) & 0 \leq k \leq a_1 \\
            (-1)^{\sum_{i=2}^{\hat{n}}b_i} \frac{k}{pr_1}\G_p\left(\frac{\ba}{\bbeta}\right) \hypcoeff{k} & a_1+1 \leq k \leq a_2 \\
            0 & a_2+1 \leq k \leq p-1
        \end{cases}
        \pmod{p}.
    \end{equation*}
    Therefore,
    \begin{align*}
        \mathrm{Res}_{t=\infty}(tR(t)) &= -\sum_{k=0}^{p-1} \tilde{B_k} - \sum_{i=2}^{\hat{n}} \sum_{k=0}^{p-b_i-1} \tilde{C}_{i,k} \\&\equiv (-1)^{1+\sum_{i=2}^{\hat{n}}b_i} \G_p\left(\frac{\ba}{\bbeta}\right)E_{\mathrm{GK}}(\ba,\bbeta;1) \pmod{p}\end{align*}
    as was to be shown.
\end{proof}
In particular, the residue of $tR(t)$ at infinity is zero whenever the denominator has a degree at least two greater than the numerator, which gives us the following:
\begin{corollary}\label{cor:GK-super}
    Let $\HD = \left\{\ba, \bbeta\right\}$ be as in \Cref{lem:Dwork-congruence} satisfying $\gamma(\HD) < 1$.  For all primes $p\equiv 1\pmod{M(\HD)}$ such that $p> \hat{n}+1$, where as before $\hat{n}$ is the number of elements of $\bbeta$ not equal to $1$,
    \[
        H_p(\ba, \bbeta; \l; \bar{\omega}_p) \equiv F (\ba, \bbeta; 1)_{p-1} \pmod{p^2}
    \]
    If instead $\gamma(\HD)=1$, from the definition of $R(t)$ given by \eqref{eq:Rt-defn}, we have
    \[
        \mathrm{Res}_{t=\infty}(tR(t))=-\lim_{|t| \to \infty} (t^2R(t)) = (-1)^{1+\sum_{i=2}^{\hat{n}}b_i}.
    \]
    Thus, by \Cref{prop:GK-error}, 
    \begin{equation*}
        E_{\textit{GK}}(\ba, \bbeta; 1) \equiv  \G_p\left(\frac{\bbeta}{\ba}\right)  \pmod{p},
    \end{equation*}
    so that \Cref{lem:GK-congruence} becomes 
    \begin{equation*}\label{eq:GK-residue-1}
        H_p(\ba, \bbeta; \l; \bar{\omega}_p) \equiv F (\ba, \bbeta; 1)_{p-1} + \G_p\left(\frac{\bbeta}{\ba}\right)p \pmod{p^2}.
    \end{equation*}
\end{corollary}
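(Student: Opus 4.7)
The plan is to derive both statements by combining \Cref{lem:GK-congruence} (at $\l=1$) with \Cref{prop:GK-error} and a careful accounting of the residue of $tR(t)$ at infinity, which is what distinguishes the $\gamma(\HD)<1$ case from the boundary case $\gamma(\HD)=1$. Concretely, \Cref{lem:GK-congruence} gives
\[
H_p(\ba,\bbeta;1;\bar\omega_p) \equiv F(\ba,\bbeta;1)_{p-1} + p\, E_{\mathrm{GK}}(\ba,\bbeta;1) \pmod{p^2},
\]
and \Cref{prop:GK-error} identifies $E_{\mathrm{GK}}(\ba,\bbeta;1)$ modulo $p$ with $(-1)^{1+\sum_{i=2}^{\hat n}b_i}\G_p(\bbeta/\ba)\,\mathrm{Res}_{t=\infty}(tR(t))$. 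So the only remaining task is to compute this residue at infinity in terms of $\gamma(\HD)$.

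The first step is to recompute the difference $d\colonequals\deg(\mathrm{denom})-\deg(\mathrm{num})$ for $R(t)$ defined in \eqref{eq:Rt-defn}, as already done implicitly in the proof of \Cref{cor:Dwork-super}. Using $a_i=(p-1)r_i$ and $b_i=(p-1)q_i$ from \Cref{lem:dwork-fixed}, one obtains
\[
    d \;=\; 2p + \sum_{i=1}^n(a_i-b_i) \;=\; 2p-(p-1)(\gamma(\HD)+1).
\]
Thus $\gamma(\HD)<1$ forces $d\geq 3$ (since $d\in\Z$), while $\gamma(\HD)=1$ gives $d=2$ exactly. For the rational function $tR(t)$ the difference drops by $1$, so when $\gamma(\HD)<1$ the residue $\mathrm{Res}_{t=\infty}(tR(t))$ vanishes automatically and \Cref{prop:GK-error} yields $E_{\mathrm{GK}}(\ba,\bbeta;1)\equiv 0\pmod p$, proving the first congruence.

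For the $\gamma(\HD)=1$ case, one must evaluate the leading coefficients of $R(t)$. The numerator $\prod_{i=2}^n(t+1-r_i-p)_{p-a_i-1}$ is a product of monic Pochhammer symbols, so its leading coefficient is $1$. The first three factors of the denominator are also monic, while each factor $(-t+1+ip)_{p-b_i-1}$ for $2\le i\le \hat n$ contributes a leading coefficient $(-1)^{p-b_i-1}=(-1)^{b_i}$ (as $p$ is odd). Hence the denominator has leading coefficient $(-1)^{\sum_{i=2}^{\hat n}b_i}$, and so
\[
\mathrm{Res}_{t=\infty}(tR(t))\;=\;-\lim_{|t|\to\infty}t^2R(t)\;=\;(-1)^{1+\sum_{i=2}^{\hat n}b_i}.
\]
Substituting this into \Cref{prop:GK-error} the two sign factors cancel, giving $E_{\mathrm{GK}}(\ba,\bbeta;1)\equiv \G_p(\bbeta/\ba)\pmod p$, and plugging this into \Cref{lem:GK-congruence} yields the stated supercongruence. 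The only potentially delicate point—though it is routine—is the careful sign tracking in the denominator's leading coefficient, since a parity error here would flip the sign of the $p$-linear term in the final congruence; the hypothesis $p>\hat n+1$ inherited from \Cref{lem:beta-hat-residues} ensures the residue decomposition of $R(t)$ is well-behaved so this computation is justified.
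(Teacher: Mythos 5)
Your proposal is correct and follows essentially the same route as the paper: the corollary is deduced by combining \Cref{lem:GK-congruence} at $\l=1$ with \Cref{prop:GK-error}, with the dichotomy governed entirely by the degree count $\deg(\mathrm{denom})-\deg(\mathrm{num}) = 2p+\sum_{i=1}^n(a_i-b_i) = 2p-(p-1)(\gamma(\HD)+1)$ already established in the proof of \Cref{cor:Dwork-super}. Your explicit verification of the leading-coefficient sign $(-1)^{\sum_{i=2}^{\hat n}b_i}$ in the denominator (so that the two signs cancel against the prefactor in \Cref{prop:GK-error}) fills in exactly the detail the paper leaves implicit, and it checks out.
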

We conclude this section by deducing \Cref{thm:supercongruences}.
\begin{proof}[Proof of \Cref{thm:supercongruences}]
    As we are assuming $\gamma(\HD)\le 1$, \Cref{cor:GK-super} yields
    \[
        H_p(\ba, \bbeta; 1; \bar{\omega}_p) -\delta_{\gamma(\HD)=1}\G_p\left(\frac{\bbeta}{\ba}\right)p \equiv F(\ba, \bbeta; 1)_{p-1} \pmod{p^2}.
    \]
    By \Cref{cor:Dwork-super}, it follows that for all positive integers $s$,
    \[
        H_p(\ba, \bbeta; 1; \bar{\omega}_p) -  \delta_{\gamma(\HD)=1}\G_p\left(\frac{\bbeta}{\ba}\right)p \equiv \frac{F_{s+1}(\ba, \bbeta;1)}{F_s(\ba, \bbeta;1)} \pmod{p^2}.
    \]
    Letting $s$ go to infinity and using Dwork's congruence \eqref{eq:Dwork-unit-root-congruence} then gives the result.
\end{proof}
In the ensuing discussion, we will primarily focus on cases where the hypergeometric data $\HD$ has length three or four. As such, we state the specializations of \Cref{cor:GK-super} to these settings.  For the length four case, we utilize \Cref{cor:Jac-Gp} to express our supercongruence using $\P$ rather than $H_p$. 
%

\section{\texorpdfstring{Proof of \Cref{thm:main}}{Proof of Theorem 2.1}} \label{sec:hgGalois}

We are now ready to prove \Cref{thm:main}. 
\begin{proof}[Proof of \Cref{thm:main}]

The length $n$ of $\HD$ is 3 or 4 by assumption. By \Cref{thm:Katz}, the representation $\rho_{\{HD;1\}}$ of $G(M)$ is $n-1$ dimensional. Condition (2) of \Cref{thm:main} implies $\chi_{\HD}\otimes\rho_{\{\HD;1\}}$ of $G(M)$ has an  extension  to $G_\Q$; use $\hat \rho_{\{HD;1\}}$ to denote such an extension as in \Cref{prop:4.3}.  When $n=4$, condition (1) of \Cref{thm:main} implies the three-dimensional representation $\chi_{\HD}\otimes\rho_{\{HD;1\}}$ is reducible so $\gamma(\HD)=1$, see Remark \ref{remakr:length4}. Part (iii) of \Cref{thm:Katz} implies if this representation is self-dual, (which is the case by condition (2)), then it contains a nontrivial $d$-dimensional subrepresentation space that admits a non-degenerate alternating pairing. Thus $d$ is even, and so must be 2. Then $\chi_{\HD}\otimes\rho_{\{\HD;1\}}$ is the direct sum of two sub-representations both extendable to $G_\Q$.  Namely $
    \hat \rho_{\{HD;1\}}=\hat \rho_{\{HD;1\}}^{prim}\oplus \left(\psi_{\HD}\otimes \epsilon_\ell|_{G(M)}\right)
$ where $\hat \rho_{\{HD;1\}}^{prim}$ is 2-dimensional, $\psi_{\HD}$ is a finite order character of $G(M)$ such that $\psi_{\HD}(\wp)=\pm 1$ for any prime ideal $\wp$ of $\Z[\zeta_M]$ above $p\in P_M=\{p\mid p \text{ prime, } p\equiv 1\pmod M\}$, and $\epsilon_\ell$ denotes the $\ell$-adic cyclotomic character. When $n=3$, we refer to $\hat \rho_{\{HD;1\}}$ as $\hat \rho_{\{HD;1\}}^{prim}$ for uniformity of notation.  In either case, for any prime ideal $\wp$ above $p\in P_M$, \begin{equation}\label{eq:extension-decomp}
   \Tr \hat \rho_{\{HD;1\}}(\Frob_\wp)=\Tr\hat \rho_{\{HD;1\}}^{prim}(\Frob_\wp)+ \delta_{\gamma(\HD)=1}\psi_{\HD}(p)p.
\end{equation}
 We use $p$ rather than $\mathfrak{p}$ in $\psi_{\HD}$ as the character is independent of choice of $\wp$ over $p$.  \medskip
 
 We now relate the discussion to the modular target $f_{\HD}^\sharp$. By the assumption (1) of \Cref{thm:main}, if $f_{\HD}$ is inside a Hecke orbit then any Hecke eigenform in the orbit is a natural candidate for $f_{\HD}^\sharp$. Further, assumption (2) of \Cref{thm:main} implies the well-definedness of  $f_{\HD}^\sharp$ which will work for not only $\HD$ but also its conjugates, as in \S \ref{ss:K2}. Next, we will show that Deligne's representation $\rho_{f_{\HD}^\sharp}|_{G(M)}\simeq \hat \rho_{\{HD;1\}}^{prim}.$ By the CFGL discussion in \Cref{ss:CFGL} amounting to \Cref{prop:CFGL}, we know for $p\equiv 1\pmod M$ their Frobenius traces agree modulo $p$ and have the same unit root in the ordinary case.  To proceed we show that the supercongruences in \Cref{thm:supercongruences} are enough to strengthen this from a congruence modulo $p$ to equality.

By part (iii) of \Cref{thm:Katz}, the weight of $f_{\HD}^\sharp$ is $n$. By condition (2) of \Cref{thm:main} and the discussion in \Cref{ss:CFGL}, $a_p(f^\sharp_{\HD})=\tilde b_p$ for $p\in P_M$, where $\tilde b_p$ is the $p$th Hecke eigenvalue of $f_{\HD}$, as in the statement of \Cref{thm:main}. For any such prime ideal $\wp$ that is ordinary, 
\begin{equation}\label{eq:ap-u}
\Tr \hat \rho_{\{HD;1\}}^{prim}(\Frob_\wp)=a_p(f_{\HD}^\sharp)=u_{f_{\HD},p}+ \varphi(p)p^{n-1}/u_{f_{\HD},p},    
\end{equation} where  $u_{f_{\HD},p}\in\Z_p^\times$ is a solution of $T^2- a_p(f_{\HD}^\sharp)T+ \varphi(p)p^{n-1}=0$   
which can be also computed by \eqref{eq:u_f} using CFGL. \medskip

We turn our attention to the traces  of the Katz representation. By condition (2) of \Cref{thm:main}, for every nonzero prime ideal $\wp$ above $p\in P_M$, $\chi_{\HD}(\wp)\P(\HD;1;\wp)$ is  in $\Z$, where we recall $\chi_{\HD}(\wp)
=\iota_\wp(r_n)(C_1)^{-1}\cdot \prod_{i=1}^{n-1}\iota_\wp(r_i)(-1)$.
To proceed, we fix the $p$-adic embedding  $\iota_\wp(\frac{1}{p-1})\mapsto \bar \omega_p$ as in \eqref{eq:Hp-embedded}. Thus, the  embedding of $\iota_\wp(r_n)(C_1)^{-1}$ into $\Z_p$ is $\omega_p^{(p-1)r_n}(C_1)$. Together with  the conversion from $\P$ to $H$ given by \eqref{eq:P-H},  the  embedding of $\Tr \hat \rho_{\{HD;1\}}(\Frob_\wp)=(-1)^{n-1}\chi_{\HD}(\wp)\P( {\HD};1;\wp)$ is $\omega_p^{(p-1)r_n}(C_1)\G_p\left(\frac{r_n,q_n-r_n}{q_n}\right)H_p( {\HD};1;\bar \omega_p)$. Comparing with the earlier formula for $\Tr \hat \rho_{\{HD;1\}}(\Frob_\wp)$ in \eqref{eq:extension-decomp} we have
\begin{eqnarray*}
\omega_p^{(p-1)r_n}(C_1)\G_p\left(\frac{r_n,q_n-r_n}{q_n}\right)H_p( {\HD};1;\bar \omega_p)=
a_p(f_{\HD}^\sharp)+\delta_{\gamma(\HD)=1}\cdot\psi_{\HD}(p)\cdot p,
\end{eqnarray*} which is \eqref{eq:2.9}. Note that modulo $p$, the above is equivalent to \eqref{eq:cong-mod-p}. 
\medskip

What remains to be determined is the sign $\psi_{\HD}(p),$ which follows from \Cref{thm:supercongruences}. Applying \eqref{eq:super-combined} to $H_p( {\HD};1;\bar \omega_p)$, one has
\begin{multline}\label{eq:6.3}
a_p(f_{\HD}^\sharp)+\delta_{\gamma(\HD)=1}\cdot\psi_{\HD}(p)\cdot p\equiv  \omega_p^{(p-1)r_n}(C_1)\G_p\left(\frac{r_n,q_n-r_n}{q_n}\right)\\ \times \left(  F(\ba, \bbeta; 1)_{p-1}+\delta_{\gamma(\HD)=1}\G_p\left(\frac{\bbeta}{\ba}\right) p\right) \pmod{p^2}.
\end{multline}
From the comparison, at ordinary primes $p$, we have the following agreement of unit roots. $$\mu_{f_{\HD},p}=\omega_p^{(p-1)r_n}(C_1)\G_p\left(\frac{r_n,q_n-r_n}{q_n}\right)\mu_{\HD,1,p}$$

When $\gamma(\HD)=1$, by comparing the terms of \eqref{eq:6.3} with $p$-adic valuation one at ordinary $p\in P_M$, one has

\begin{equation}\label{eq:(6.4)}
\begin{split}
 \psi_{\HD}(p)&\equiv\omega_p^{(p-1)r_n}(C_1)\G_p\left(\frac{r_n,q_n-r_n}{q_n}\right)\G_p\left (\frac{\bbeta}{\ba}\right)\\&\equiv \omega_p^{(p-1)r_n}(C_1)\G_p\left(\frac{\bbeta^\flat,q_n-r_n}{\ba^\flat}\right)\\&\equiv (-1)^{n-1} C_1^{(p-1)r_n}\G_p\left(\frac{q_n-r_n}{\ba^\flat}\right)\pmod p, 
\end{split}
\end{equation}as $(-1)^{n-1}=\G_p(\bbeta^\flat)=\G_p(1)^{(n-1)}$. Thus the claim \eqref{eq:sgn} is established. \medskip

To finish the proof, we note that for $\wp$ above $p\equiv 1\pmod M$,  the absolute value of the integer $$ (-1)^{n-1}\chi_{\HD}(\wp)\P(\HD;1;\wp)-\delta_{\gamma(\HD)=1}\cdot \psi_{\HD}(p)\cdot p$$  is less than $3p^{3/2}+p$ when $n=4$ (resp. $2p$ when $n=3$) while $a_p(f_{\HD}^\sharp)$ is an integer whose absolute value is less than $2p^{3/2}$ (resp. $2p$). Theorem \ref{thm:supercongruences} implies that they agree modulo $p^2$. For all $p \geq 29$, $5p^{3/2} + p < p^2$, and so the two integers must be the same.  Similarly, for $n=3$ they agree for all $p\ge 5$.
\end{proof}

\begin{remark}
    From the modularity of two-dimensional motivic $G_\Q$ representations---see for example Theorem 1.0.4 of \cite{PanLue2022}--- any extension of $\hat \rho_{\{HD;1\}}^{prim}$ to $G_\Q$ is isomorphic to a Deligne representation associated with a Hecke eigenform $f_{\HD}^\sharp$ of character $\varphi=\varphi_{\HD}$. Our method gives the explicit Deligne representation by constructing the corresponding Hecke eigenforms which are unique up to twisting. 
\end{remark}

We end this section by pointing out that the Galois condition given by \Cref{defn:galois} often gives rise to Galois representations being extendable to $G_\Q$. Here we give a demonstration for the case discussed in \Cref{prop:K(j/8)}.

\begin{proposition}\label{eq:Prop1/8}
For each $j\in\{1,3,5,7\}$, any prime $p\equiv 1\pmod{8}$, and any prime $\wp\in\Z[\zeta_8]$ above $p$
    $$  \P\left(\left\{\frac12,\frac12,\frac j8\right\},\{1,1,1\};1;\wp\right)=a_p\left (f_{256.3.c.g}\right).$$ 
\end{proposition}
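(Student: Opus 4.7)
The plan is to apply \Cref{thm:main} (equivalently its $\BK_2$ specialization \Cref{thm:mainK2version}) to the length-three datum $\HD(j) = \{\{1/2,1/2,j/8\},\{1,1,1\}\}$ with base $\HD^\flat = \HD_2$, $(r_n,q_n) = (j/8,1)$, and modular parameter $t = \lambda(\tau)$, so that $C_1 = 16$ and $M = M(\HD(j)) = 8$. By \eqref{eq:alt-2-eta} together with \Cref{lem:E-eta}, the function $f_{\HD(j)}$ coincides up to a scalar with $\BK_2(j/8,1)(16\tau)$, a weight-three congruence cusp form of level $256$ and character $\chi_{-1}$, once one observes $(j/8,1)\in\mathbb{S}_2$.

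Both conditions (1) and (2) of \Cref{thm:main} follow from the Galois-orbit structure established in \Cref{cor:Galois}. The conjugates of $(j/8,1)$ under $(\Z/8\Z)^\times$ are exactly $(j'/8,1)$ for $j'\in\{1,3,5,7\}$, and \Cref{cor:Galois} shows that the span $V$ of the four forms $\BK_2(j'/8,1)(16\tau)$ is Hecke-invariant with $f_{256.3.c.g}$ in its Hecke orbit. This places $(j/8,1)$ in a Galois orbit in the sense of \Cref{defn:galois}, and as noted in \Cref{rem:1/8case} the two hypotheses of \Cref{thm:main} are then satisfied with $f_{\HD(j)}^\sharp = f_{256.3.c.g}$ and $\tilde b_p = a_p(f_{256.3.c.g})$.

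With both hypotheses in place, \Cref{thm:main} yields the formula \eqref{eq:2.9}. Since $\gamma(\HD(j)) = 1 - j/8 < 1$ for every $j\in\{1,3,5,7\}$, the correction term $\delta_{\gamma(\HD)=1}\cdot\psi_\HD(p)\cdot p$ is absent. It remains to evaluate the normalizing constant $(-1)^{n-1}\iota_\wp(r_n)(C_1)^{-1}\prod_{i=2}^{n-1}\iota_\wp(r_i)(-1)$: with $n=3$, one has $\iota_\wp(1/2)(-1) = (-1)^{(p-1)/2} = 1$ because $p\equiv 1\pmod 4$, and $\iota_\wp(j/8)(16) = 2^{(p-1)j/2} \equiv 1\pmod\wp$ because $2$ is a quadratic residue modulo $p\equiv 1\pmod 8$. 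The constant therefore equals $1$, yielding $a_p(f_{256.3.c.g}) = \P(\HD(j);1;\wp)$ as claimed.

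The main obstacle is verifying the extendability hypothesis (2) of \Cref{thm:main}: because $\HD(j)$ is not defined over $\Q$ for odd $j$, the extendability of the Katz representation to $G_\Q$ does not follow from general considerations. In contrast with the length-four datum treated in \Cref{prop:6.6}, no available character-sum identity of the type in \cite{LLT2} directly relates $\P(\HD(j);1;\wp)$ across the four Galois-conjugate data. The way around this is the Galois-orbit criterion of \Cref{defn:galois}, for which the decisive input is the explicit action of $T_2, T_3, T_5, T_7$ on $V$ computed in \Cref{cor:Galois}.
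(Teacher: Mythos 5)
Your overall strategy---verify conditions (1) and (2) of \Cref{thm:main} and then read off \eqref{eq:2.9}---is not the route the paper takes, and it contains a genuine gap at precisely the point you yourself flag as ``the main obstacle.'' Condition (2) is the integrality of $\chi_{\HD}(\wp)\,\P(\HD(j);1;\wp)$, equivalently the extendability of the Katz representation to $G_\Q$. You dispose of it by appealing to \Cref{defn:galois}, \Cref{cor:Galois}, and \Cref{rem:1/8case}; but \Cref{defn:galois} is only a definition, \Cref{cor:Galois} only establishes that the four forms $\BK_2(j/8,1)(16\tau)$ lie in a common Hecke orbit, and the equivalence asserted in \Cref{rem:1/8case} between condition (2) and the Galois condition is deferred to \cite[Theorem 3.4]{HMM2}---it is not proved in this paper. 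Indeed, the paper introduces this very proposition as ``a demonstration'' that the Galois condition yields extendability, so taking that implication as an input is circular within the logic of this paper. Nothing in your write-up actually shows that the a priori $\Z[\zeta_8]$-valued quantity $\P(\{\frac12,\frac12,\frac j8\},\{1,1,1\};1;\wp)$ is a rational integer.

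The paper closes this gap by a different mechanism that does not pass through \Cref{thm:main} at all. Writing $\P(\HD(1);1;\wp)=a_{11}+a_{12}\sqrt{-1}+a_{21}\sqrt{2}+a_{22}\sqrt{-2}$ with $a_{ij}\in\frac12\Z$, it applies, for each of the four embeddings $\sigma_j$ of $\Z[\zeta_8]$ into $\Q_p$, the congruence \eqref{eq:3.21} of \Cref{prop:K(j/8)} together with the mod $p^2$ supercongruences of \Cref{thm:supercongruences} to get $\sigma_j\bigl(\P(\HD(1);1;\wp)-a_p(f_{256.3.c.g})\bigr)\equiv 0\pmod{p^2}$. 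Running over all four embeddings forces $p^2$ to divide each of $a_{11}-a_p(f_{256.3.c.g})$, $a_{12}$, $a_{21}$, $a_{22}$, and the Weil bound (every complex absolute value of either side is at most $2p$) then forces the difference to vanish for $p\ge 17$, hence for every $p\equiv 1\pmod 8$. To salvage your approach you must supply an argument of this kind (or some other proof of integrality); otherwise you should prove the equality directly as the paper does, obtaining condition (2) as a byproduct rather than assuming it as a hypothesis. Your ancillary computations---that $\gamma(\HD(j))=1-j/8<1$ so the $\psi_{\HD}(p)\cdot p$ term is absent, and that the normalizing character $\chi_{\HD}(\wp)=1$ since $p\equiv 1\pmod 8$---are correct and consistent with the paper.
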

\begin{proof} Note that under the assumptions, $\chi_{\HD}(\wp)=1$.
First note that the right-hand side is an integer whose absolute value is less than or equal to $2p$ from the Weil bound. Meanwhile, the left-hand side takes value in $\Z[\zeta_8]$, so it can be written as $a_{11}+a_{12}\sqrt{-1}+a_{21}\sqrt{2}+a_{22}\sqrt{-2}$ for $a_{ij}\in \frac12\Z$. There are four different ways to embed $\Z[\zeta_8]$ in $\Q_p$, namely if we fix one embedding $\sigma(\zeta_8)\in\Q_p$, then all such embeddings  are given by sending $\zeta_8\mapsto \sigma(\zeta_8)^j$ where $j=1,3,5,7$. We note them by $\sigma_j$ accordingly. By \eqref{eq:3.21} and the corresponding modulo $p^2$ supercongruences we know for each $j$
$$\sigma_j\left(a_{11}-a_p\left (f_{256.3.c.g}\right)+a_{12}\sqrt{-1}+a_{21}\sqrt{2}+a_{22}\sqrt{-2}\right)\equiv 0 \pmod {p^2}.$$ Thus $p^2$ divides each of the half integers $a_{11}-a_p\left (f_{256.3.c.g}\right), a_{12},a_{21},a_{22}$. If any of these half integers is nonzero, then any complex norm of 

$$ \P\left(\left\{\frac12,\frac12,\frac 18\right\},\{1,1,1\};1;\wp\right)-a_p\left (f_{256.3.c.g}\right)$$ is at least $p^2/2$, while any complex norm of either   $  \P\left(\left\{\frac12,\frac12,\frac 18\right\},\{1,1,1\};1;\wp\right)$ or $a_p\left (f_{256.3.c.g}\right)$   is less than or equal to $2p,$ this is impossible when $p\ge 17$. Thus the difference between both hand sides is 0.
\end{proof}

The $j = 1$ case of Proposition \ref{eq:Prop1/8} resolves case $3$ of the conjectures of Dawsey and McCarthy in Table 2 of \cite{Dawsey-McCarthy}.


\section{Appendix: 
Modular Forms on Triangle Groups} \label{sec:triangle} 
A crucial part of our method is computing the target modular form explicitly using the integral representation of classical hypergeometric functions and an appropriate Hauptmodul. In the $n = 3$ case, the values of a classical $_{2}F_{1}$ function evaluated at a Hauptmodul $t_{d}$ are used.   In \cite{Stiller85}, Stiller pointed out that suitable homogeneous solutions of certain degree-2 ordinary differential equations can be viewed as weight-1  forms of the corresponding monodromy groups. Conversely, it is known that weight-1 (resp. more generally $k$)  modular forms are annihilated by  degree-2 (resp. $k+1$) ordinary differential equations in terms of non-constant modular functions, see \cite{Yang04, Yang-Schwarzian} by Yang.  The Schwarz map, mentioned earlier relates the $_{2}F_{1}$ function in question to a triangle group, that is commensurable to $\text{SL}_{2}(\Z)$, in the cases we consider.

A similar idea is used in the $n = 4$ case. However, here the Clausen formula is needed to relate the target $_{3}F_{2}$ function, which now appears in the computation, back to a square of $_{2}F_{1}$ functions.

Below we list a few relevant groups commensurable with $\SL_2(\Z)$ related to our method, given in the partial order of containment, see Figure \ref{fig:1}.  

\begin{figure}[ht]
    \centering
    \[
        \xymatrix{
        &\genfrac{}{}{0pt}{1}{(2,6,\infty)}{\G_0^+(3);\,t_{6+}}\ar@{-}[dr]^{2}&& \genfrac{}{}{0pt}{1}{(2,3,\infty)}{\SL_2(\Z);\, \frac{1728}{j}} \ar@{-}[dl]^{4}\ar@{-}[dr]^{3} && \genfrac{}{}{0pt}{1}{(2,4,\infty)}{\G_0^+(2);\,  t_{4+}}\ar@{-}[dl]^{2} \\
        &&   \genfrac{}{}{0pt}{1}{(3,\infty,\infty)}{\G_0(3); \, t_3}&& \genfrac{}{}{0pt}{1}{(2,\infty,\infty)}{\G_0(2); \, t_4}\ar@{-}[d]^{2} \\
        &&&& \genfrac{}{}{0pt}{1}{(\infty,\infty,\infty)}{\G(2)\simeq \G_0(4); \, t_2}
        }
    \]
    \caption{Some triangle groups commensurable with $\SL_2(\Z)$ }
    \label{fig:1}
\end{figure}

For each group $\G$ in Figure \ref{fig:1}, we choose a Hauptmodul $t(\tau)$ which takes
the values $0$, $1$, and $\infty$ at the elliptic points $e_1,e_2,e_3$ of $\G$. By abuse of notation, we regard each cusp as an elliptic point of infinite order. See Table \ref{tab:Hauptmodul} below for the exact functions. The first $t(\tau)$ coefficient, denoted as $C_{1}$, is important in our explicit method.
\begin{table}[ht]
 \centering
 \begin{tblr}{
 width = \textwidth,
 colspec ={||X[c,1.2]||X[c,5.2]|X[c,1]|X[c,2.6]|X[c,2]|X[c,1.9]||},
 }  \hline
    \mbox{Group} &  \mbox{Hauptmodul}\,  $t$ & $C_1$ & ${e_1,e_2,e_3}$ &\mbox{Orders}& $t \overset{ W}\leftrightarrow 1-t$\\ \hline
    $\G_0(1)$ & $1728/j(\tau)$ & $1728$ & $i\infty, i, \frac{-1+i\sqrt{3}}{2}$  & $\infty, 2, 3$ &\\ \hline 
    $\G(2)$ & $\l(\tau)\colonequals16 \frac{\eta(\frac{\tau}{2})^{8}\eta(2 \tau)^{16}}{\eta(\tau)^{24}}$ & $16$ & $i\infty, 0, 1$ & $\infty, \infty, \infty$ & $\M 0{-1}10$ \\ \hline 
    $\G_0(3)$ &  $t_{3}(\tau)\colonequals\frac{27\eta(3 \tau)^{12}}{\eta(\tau)^{12} + 27\eta(3 \tau)^{12}}$ & $27$ & $i\infty, 0,  \frac{3+i\sqrt 3}{6}$ & $\infty, \infty, 3$ & $\M 0{-1}30$ \\ \hline 
    $\G_0(4)$ & $t_2(\tau)\colonequals\l(2\tau)$ & $16$ & $i\infty, 0, \frac 12$ & $\infty, \infty, \infty$ &\\ \hline 
    $\G_0(2)$ &  $u(\tau)\colonequals-64\frac{\eta(2\tau)^{24}}{\eta(\tau)^{24}}$ & $-64$ & $i\infty , \frac{1+i}{2}, 0$ & $\infty, 2, \infty$ & $\M 0{-1}20$ \\ \hline 
    $\G_0(2)^+$ & $t_{4+}(\tau)\colonequals\frac{256\eta(\tau)^{24}\eta(2\tau)^{24}}{(\eta(\tau)^{24}+64\eta(2\tau)^{24})^2}$ & $256$ & $i\infty, \frac{i}{\sqrt 2}, \frac{1+i}{2}$ & $\infty, 2,4$ &\\ \hline 
    $\G_0(3)^+$ & $t_{6+}(\tau)\colonequals \frac{ 108\eta(\tau)^{12}\eta(3\tau)^{12}}{(\eta(\tau)^{12}+27\eta(3\tau)^{12})^2}$ & $108$ & $i\infty, \frac{i}{\sqrt 3}, \frac{3+i\sqrt 3}6$ & $\infty, 2, 6$ & \\\hline
 \end{tblr}
 \caption{Hauptmoduln}\label{tab:Hauptmodul}
\end{table}
We also give explicit evaluations of certain hypergeometric functions evaluated at these Hauptmoduln. 
We set
\[
    \ba^{(2)}_d = \left\{\frac{1}{d}, 1-\frac{1}{d}\right\}, \qquad \ba^{(3)}_d = \left\{ \frac{1}{2}, \frac{1}{d}, 1-\frac{1}{d}\right\}.
\]
Additionally, we let ${}_2F_1(\ba^{(2)}_d; t_d)$ and ${}_3F_2(\ba^{(3)}_d; t_{d+})$ denote 
\[
    \pFq21{\, \,\frac1d&\frac{d-1}{d}}{\, \,1&1}{t_d} \quad \text{and} \quad \pFq32{\, \frac 12&\frac1d&\frac{d-1}{d}}{\, \, 1&1&1}{t_{d+}},
\]
respectively.  The $_{2}F_{1}(\ba^{(2)}_d; t_d)$ value for $d = 2$ is given in \eqref{eqn: alt-2}. The analogous theory for $d = 3$ involves the Hauptmodul $t_{3}$ from \Cref{tab:Hauptmodul}.
In \cite{BBG}, Borwein, Borwein, and Garvan show that
\begin{equation}\label{eqn: alt-3}
   \pFq21{\, \,\frac13&\frac23}{\, \, 1&1}{t_3(\tau)}=\sum_{n,m\in \Z}q^{n^2+nm+m^2},
\end{equation}
when both sides converge. Another weight $\frac{1}{2}$ Jacobi theta function which appears in the $_{3}F_{2}(\ba_{d}^{(3)};t_{d+})$ function when $d = 2$ is $\theta_{4}$. The definition of $\theta_{4}$ and the connection to $\theta_{3}$ are given in \eqref{eq:theta4}. Now combining \eqref{eqn: alt-2} and \eqref{eqn: alt-3} with the Clausen formula for hypergeometric functions gives the following information
\[
\begin{tblr}{
width=0.45\textwidth,
colspec={|c|c|c|c|},
}\hline
 d&\G_d & t_d  & {}_2F_1(\ba^{(2)}_d; t_d)\\
 \hline
 2 &\G(2) & \l(\tau)&  \theta_3(\tau)^2\\ \hline
 2 &\G_0(4)&\l(2\tau)& \theta_3(2\tau)^2\\ \hline
 3 & \G_0(3)  & t_3(\tau)   & \frac{3 \eta(3\tau)^{3} + \eta\left(\frac{\tau}{3}\right)^{3}}{\eta(\tau)} \\ \hline
 4 & \G_0(2)& \frac{u(\tau)}{u(\tau)-1} &(-E_{2,2}(\tau))^{1/2}  \\ \hline
\end{tblr}
\qquad
\begin{tblr}{
 width=0.45\textwidth,
 colspec ={|c|c|c|c|c|c|c|},
 }\hline
 d&\G_d & t_{+} & {}_3F_2(\ba^{(3)}_d; t_{d+}) \\\hline
 2 &\G_0(2) &  u(\tau) & \theta_4(2\tau)^4 \\ \hline
 3 & \G_0(3)^+  &  t_{6+}(\tau) &-\frac{1}{2} E_{2,3}(\tau) \\ \hline
 4 & \G_0(2)^+&  t_{4+}(\tau) & -E_{2,2}(\tau)  \\ \hline
 6&  \G_0(1)& \frac{1728}{j(\tau)} & E_4(\tau)^{1/2}  \\ \hline
\end{tblr}
\label{tab:data}
\]
where  $E_{2,N}(\tau)\colonequals E_2(\tau)-NE_2(N\tau)$, $\displaystyle E_2(\tau)=1-24\sum_{n\ge 1}\frac{nq^n}{1-q^n}$ is the weight two Eisenstein series---a holomorphic quasi-modular form, and $E_{4}$ is the normalized weight four Eisenstein series on $\SL_{2}(\Z)$. We also remark that when $d=6$,  by a hypergeometric quadratic formula,  
   $${}_2F_1(\ba^{(2)}_6; t_6)= \pFq21{\, \, \frac1{12}&\frac5{12}}{\, \,1&1}{4t_6(1-t_6)}= \pFq21{\, \, \frac1{12}&\frac5{12}}{\, \, 1&1}{\frac{1728}{j}} =E_4^{1/4},$$
where $t_6$ is a function satisfying $4t_6(1-t_6)=\frac{1728}{j}$.  

The proof of \Cref{lem:E-eta} indicates understanding the logarithmic derivatives of the Hauptmoduln of interest is an important part of computing the target modular form. Therefore, the logarithmic derivatives for the Hauptmoduln of interest are recorded below. 
\begin{proposition}\label{prop:LogDerivative}
Use $\Theta\colonequals q\frac{d}{dq}\log $ to denote the logarithmic derivative in terms of $q=e^{2\pi i \tau}$, then
  \begin{align*}
      \Theta \l(\tau) &= \frac 12\theta_4^4(\tau)\\
     \Theta u(\tau) &=  \frac{1+\l(2\tau)}{1-\l(2\tau)} \theta_4^4(2\tau)= (1-u)^{1/2} \theta_4^4(2\tau)\\
    \Theta   t_3(\tau)& = (1-t_3(\tau)) \left(\frac{3 \eta(3\tau)^{3} + \eta\left(\frac{\tau}{3}\right)^{3}}{\eta(\tau)}\right)^2 \\
     \Theta j(\tau) & = \frac{E_6(\tau)}{E_4(\tau)}\\    
    \Theta t_{4+}(\tau)& = -E_{2,2}(\tau) \frac{\eta(\tau)^{24}-64\eta(2\tau)^{24}}{\eta(\tau)^{24}+64\eta(2\tau)^{24}} \\
    \Theta t_{6+}(\tau)&= -\frac 12E_{2,3}(\tau) \frac{\eta(\tau)^{12}-27\eta(3\tau)^{12}}{\eta(\tau)^{12}+27\eta(3\tau)^{12}}
   \end{align*}   
\end{proposition}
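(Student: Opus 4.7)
\emph{Proof proposal.} The unifying principle is the single formula
\[
\Theta\,\eta(d\tau)=\frac{d}{24}\,E_2(d\tau),
\]
obtained by applying $q\,d/dq$ to $\log\eta(d\tau)=\tfrac{d}{24}\log q+\sum_{n\ge1}\log(1-q^{dn})$ and comparing with the Lambert series for $E_2$. Together with the elementary rules $\Theta(fg)=\Theta f+\Theta g$, $\Theta(f/g)=\Theta f-\Theta g$, and the bilinear identity $\Theta(A+B)=(A\,\Theta A+B\,\Theta B)/(A+B)$, this reduces every logarithmic derivative on the left-hand side to an explicit $\mathbb Q$-linear combination of shifted Eisenstein series $E_2(d\tau)$. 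The plan is to compute each left-hand side in this form, then recognize the resulting weight-$2$ (quasi-)modular form as the stated theta/Eisenstein expression.

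For the pure eta quotients $\lambda$ and $u$ the direct calculation gives
\[
\Theta\lambda=\tfrac16 E_2(\tau/2)+\tfrac43 E_2(2\tau)-E_2(\tau),\qquad \Theta u=2E_2(2\tau)-E_2(\tau)=-E_{2,2}(\tau).
\]
Recognizing the first as $\tfrac12\theta_4(\tau)^4$ is the classical Jacobi identity that follows from $\lambda=\theta_2^4/\theta_3^4$, the heat equation for theta functions, and Jacobi's relation $\theta_2^4+\theta_4^4=\theta_3^4$; alternatively one verifies it in the finite-dimensional space $M_2(\Gamma(2))$ by matching finitely many $q$-coefficients. The $u$ identity is then immediate from $\theta_4(2\tau)^4=(1-\lambda(2\tau))\theta_3(2\tau)^4$ combined with the standard eta-quotient expression for $1-u$ showing that $(1-u)^{1/2}\theta_4(2\tau)^4=-E_{2,2}(\tau)$.

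For the binomial Hauptmoduln $t_3, t_{4+}, t_{6+}$, write each as $t=cB/(A+B)$ or $t=cAB/(A+B)^2$ with $A,B$ pure eta powers and apply the bilinear rule. For instance, with $A=\eta(\tau)^{24}$, $B=64\eta(2\tau)^{24}$, algebra collapses
\[
\Theta t_{4+}=\Theta A+\Theta B-2\,\Theta(A+B) \;=\; \frac{(A-B)(2E_2(2\tau)-E_2(\tau))}{A+B} \;=\; -E_{2,2}(\tau)\,\frac{A-B}{A+B},
\]
exactly the claimed formula. The same manipulation with $A=\eta(\tau)^{12}$, $B=27\eta(3\tau)^{12}$ yields the $-\tfrac12 E_{2,3}(\tau)$ prefactor of $t_{6+}$. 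For $t_3=B/(A+B)$ with the same $A,B$, the computation gives $\Theta t_3=\tfrac12(1-t_3)(3E_2(3\tau)-E_2(\tau))=-\tfrac12(1-t_3)E_{2,3}(\tau)$, and one concludes by squaring the cubic theta identity \eqref{eqn: alt-3} of Borwein--Borwein--Garvan, which supplies the identification $\bigl(\tfrac{3\eta(3\tau)^3+\eta(\tau/3)^3}{\eta(\tau)}\bigr)^2=-\tfrac12 E_{2,3}(\tau)$.

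Finally, $\Theta j$ is handled by Ramanujan's differential equations $q E_4'=\tfrac13(E_2E_4-E_6)$ and $\Theta\Delta=E_2$, together with $j=E_4^3/\Delta$: this yields $\Theta j = 3\Theta E_4-\Theta\Delta=-E_6/E_4$, matching the proposition up to the sign convention fixed by the choice of Hauptmodul $1728/j$ for $\mathrm{SL}_2(\mathbb Z)$. The main obstacle in the whole proof is really only the $\lambda$ case, which is genuinely a Jacobi theta identity; every other identification is either routine algebra with the bilinear rule or a finite $q$-expansion check in a known finite-dimensional space of weight-$2$ modular forms.
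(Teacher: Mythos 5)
The paper records this proposition in the appendix without supplying a proof, so there is nothing to compare your argument against; judged on its own, your proposal is correct and essentially complete. The engine $\Theta\,\eta(d\tau)=\tfrac{d}{24}E_2(d\tau)$ together with the rule $\Theta(A+B)=(A\,\Theta A+B\,\Theta B)/(A+B)$ does reduce every left-hand side to a $\Q$-combination of the $E_2(d\tau)$, and your computations check out: $\Theta\lambda=\tfrac16E_2(\tau/2)+\tfrac43E_2(2\tau)-E_2(\tau)$, $\Theta u=2E_2(2\tau)-E_2(\tau)=-E_{2,2}(\tau)$, $\Theta t_3=-\tfrac12(1-t_3)E_{2,3}(\tau)$, and the $\tfrac{(A-B)(\Theta B-\Theta A)}{A+B}$ collapse for $t_{4+}$ and $t_{6+}$ lands exactly on the stated formulas. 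The remaining identifications (the Jacobi identity $\tfrac16E_2(\tau/2)+\tfrac43E_2(2\tau)-E_2(\tau)=\tfrac12\theta_4(\tau)^4$, the Borwein identity $a(q)^2=-\tfrac12E_{2,3}(\tau)$, and $(1-u)^{1/2}\theta_4(2\tau)^4=-E_{2,2}(\tau)$) are classical and, as you note, can be certified rigorously by matching finitely many $q$-coefficients in the appropriate finite-dimensional space of weight-two forms. Two small points. First, the $u$ step as written skips a beat: what you actually need is the eta-quotient identity $(1-u)^{1/2}=\tfrac{1+\lambda(2\tau)}{1-\lambda(2\tau)}$, after which $\tfrac{1+\lambda(2\tau)}{1-\lambda(2\tau)}\theta_4(2\tau)^4=(1+\lambda(2\tau))\theta_3(2\tau)^4=\theta_3(2\tau)^4+\theta_2(2\tau)^4=2E_2(2\tau)-E_2(\tau)$; spell this out or replace it by the coefficient check. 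Second, your value $\Theta j=-E_6/E_4$ is correct and exposes a sign slip in the proposition as printed: since $\Theta j=qj'/j$ has constant term $-1$ while $E_6/E_4$ has constant term $+1$, the fourth line should read $\Theta\bigl(\tfrac{1728}{j}\bigr)=\tfrac{E_6}{E_4}$, which is the form consistent with the Hauptmodul $1728/j$ chosen for $\SL_2(\Z)$ in Table \ref{tab:Hauptmodul}. Flagging that explicitly, rather than folding it into ``up to the sign convention,'' would make the write-up airtight.
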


\bibliographystyle{habbrv}
\bibliography{ref}

\end{document}